\newtheorem{theorem}{Theorem}[section]
\newtheorem{corollary}[theorem]{Corollary}
\newtheorem{proposition}[theorem]{Proposition}
\newtheorem{lemma}[theorem]{Lemma}
\newtheorem{claim}[theorem]{Claim}
\theoremstyle{definition}
\newtheorem{definition}[theorem]{Definition}
\newtheorem*{example*}{Example}
\newtheorem{remark}[theorem]{Remark}
\newcommand{\T}{\ensuremath{\mathbb{T}}}
\newcommand{\R}{\ensuremath{\mathbb{R}}}
\newcommand{\Z}{\ensuremath{\mathbb{Z}}}
\newcommand{\N}{\ensuremath{\mathbb{N}}}
\newcommand{\eps}{\varepsilon}
\newcommand{\abs}[1]{\left|#1\right|}
\newcommand{\ad}{\mathrm{ad}}
\newcommand{\dsep}{\Delta}
\newcommand{\ac}{\ensuremath{\mathrm{{ac}}}}
\newcommand{\oac}{\ensuremath{\overline{\mathrm{ac}}}}
\newcommand{\uac}{\ensuremath{\underline{\mathrm{ac}}}}
\newcommand{\Sep}{\ensuremath{\mathrm{Sep}}}
\newcommand{\diam}{\ensuremath{\mathrm{diam}}}
\newcommand{\Fol}{\ensuremath{\mathcal F}}
\newcommand{\odim}{\overline\dim}
\newcommand{\mc}{\mathcal}
\renewcommand{\phi}{\varphi}
\newcommand{\ssq}{\subseteq}
\renewcommand{\=}{\coloneqq}
\newcommand{\lam}{\lambda}
\newcommand{\roundqed}{{\hfill \Large $\circ$}}
\newcommand{\equi}{\ensuremath{\Leftrightarrow}}
\newcommand{\ld}{\ensuremath{,\ldots,}}
\newcommand{\dist}{\ensuremath{\mathop{\textrm{dist}}}}
\newcommand{\ind}{\ensuremath{\mathbf{1}}}
\newcommand{\interior}{\ensuremath{\mathrm{int}}}
\newcommand{\inte}{\ensuremath{\mathrm{int}}}
\newcommand{\alphlist}{\begin{list}{(\alph{enumi})}{\usecounter{enumi}\setlength{\parsep}{2pt}
      \setlength{\itemsep}{1pt} \setlength{\topsep}{5pt}
      \setlength{\partopsep}{3pt}}}
\newcommand{\arablist}{\begin{list}{(\arabic{enumi})}{\usecounter{enumi}\setlength{\parsep}{2pt}
          \setlength{\itemsep}{1pt} \setlength{\topsep}{5pt}
          \setlength{\partopsep}{3pt}}}
\newcommand{\romanlist}{\begin{list}{(\roman{enumi})}{\usecounter{enumi}\setlength{\parsep}{2pt}
              \setlength{\itemsep}{1pt} \setlength{\topsep}{5pt}
              \setlength{\partopsep}{3pt}}}
\newcommand{\Romanlist}{\begin{list}{(\Roman{enumi})}{\usecounter{enumi}\setlength{\parsep}{2pt}
              \setlength{\itemsep}{1pt} \setlength{\topsep}{5pt}
              \setlength{\partopsep}{3pt}}}
\newcommand{\bulletlist}{\begin{list}{$\bullet$}{\setlength{\parsep}{2pt}
                \setlength{\itemsep}{1pt} \setlength{\topsep}{5pt}
                \setlength{\partopsep}{3pt}\setlength{\leftmargin}{15pt}}} 
\newcommand{\Alphlist}{\begin{list}{(\Alph{enumi})}{\usecounter{enumi}\setlength{\parsep}{2pt}
      \setlength{\itemsep}{1pt} \setlength{\topsep}{5pt}
      \setlength{\partopsep}{3pt}}}
 \newcommand{\listend}{\end{list}}
\newcommand{\cD}{\mathcal{D}}
\newcommand{\cL}{\mathcal{L}}
\newcommand{\cP}{\mathcal{P}}
\newcommand{\jLim}{\ensuremath{\lim_{j\rightarrow\infty}}}
\newcommand{\oplam}{\mbox{\Large $\curlywedge$}}
\newcommand{\vol}{\ensuremath{\mathrm{Vol}}}
\newcommand{\Span}{\ensuremath{\mathrm{Span}}}
\newcommand{\Dim}{\ensuremath{\mathrm{dim}}}
\newcommand{\uboxdim}{\ensuremath{\overline{\Dim}_\mathrm{B}}}
\let\@fnsymbol\@arabic
\title{\textbf\Large Amorphic complexity of group actions with applications to quasicrystals}
\author{
    \normalsize Gabriel Fuhrmann\thanks{Department of Mathematical Sciences, 
		Durham University, UK. Email: {\texttt gabriel.fuhrmann@durham.ac.uk}}
    \and\normalsize Maik Gröger\thanks{Faculty of Mathematics and Computer Science, Jagiellonian University in Krak\'{o}w,
        Poland. Email: {\texttt maik.groeger@im.uj.edu.pl}}
    \and\normalsize Tobias Jäger\thanks{Department of Mathematics, University of Jena,
        Germany. Email: {\texttt tobias.jaeger@uni-jena.de}}
    \and\normalsize Dominik Kwietniak\thanks{Faculty of Mathematics and Computer Science, Jagiellonian University in Krak\'{o}w,
        Poland. Email: {\texttt dominik.kwietniak@uj.edu.pl}}}
\date{}
\begin{document}

\maketitle
\begin{abstract}
    In this article, we define amorphic complexity for actions of 
	locally compact $\sigma$-compact amenable groups on compact metric spaces.
	Amorphic complexity, originally introduced for $\Z$-actions, is a 
	topological invariant which measures the complexity of dynamical systems
	in the regime of zero entropy.
	We show that it is tailor-made to study strictly ergodic group actions with
	discrete spectrum and continuous eigenfunctions.
	This class of actions includes, in particular, Delone dynamical systems
	related to regular model sets obtained via Meyer's cut and project method.
	We provide sharp upper bounds on amorphic complexity of such systems.
	In doing so, we observe an intimate relationship between amorphic
	complexity and fractal geometry.
\end{abstract}

\section{Introduction}\label{sec: Introduction}

The study of low-complexity notions for group actions is both a timely and a classical topic.
Its roots go back to Halmos, Mackey, and von Neumann
who classified actions with discrete spectrum, as well as Auslander,
Ellis, Furstenberg, and Veech who set the foundations of the theory of equicontinuous
actions and their extensions.
Recent years have seen plenty of progress in illuminating the richness of
possible dynamical behaviour of minimal actions of general groups in the low complexity regime, see for example \cite{Krieger2007,CortezPetite2008,CortezMedynets2016,SabokTsankov2017,
Glasner2018,LcackaStraszak2018,FuhrmannKwietniak2020}.
As a matter of fact, the investigation of this regime not only contributes to the understanding of group actions as such but is 
of fundamental importance in the understanding of aperiodic order---with further applications to geometry, number theory and harmonic analysis \cite{Meyer1972,BaakeGrimm2013}---and the diffraction spectra of so-called Delone sets, that is, mathematical models of physical quasicrystals.
The latter results from the observation that diffraction spectra of Delone sets can be studied by means of certain associated Delone dynamical systems \cite{LeeMoody2006,BaakeLenzMoody2007,Lenz2009}, see also \cite{BaakeGrimm2013} for further information and references.
Analysing these Delone dynamical systems, it is most natural to ask when two such systems are conjugate \cite{KellendonkSadun2014}.
The standard operating procedure to answer this question clearly is to utilize dynamical invariants and one might be tempted
to study topological entropy of Delone dynamics.
However, the physically most interesting case of pure point diffraction
turns out to necessarily come with zero entropy \cite{BaakeLenzRichard2007}.
There is hence a need for finer topological
invariants which can distinguish zero entropy systems.

In this article, we propose amorphic complexity---a notion recently introduced for $\Z$-actions \cite{FuhrmannGroegerJaeger2016}---as a promising candidate for this purpose.
To that end, we extend amorphic complexity to actions of locally compact, 
$\sigma$-compact and amenable groups.
We will see that amorphic complexity is tailor-made to study strictly ergodic systems with discrete spectrum and continuous eigenfunctions, that is, minimal mean equicontinuous systems \cite[Corollary~1.6]{FuhrmannGroegerLenz2022}.
Most importantly, however, we show that amorphic complexity is not only theoretically well-behaved but also well-computable in specific examples.
This is particularly true due to a neat connection to fractal geometry.
We elaborate on this in the last section of this article where we apply
our findings to model sets---particular Delone sets constructed by means of 
Meyer's cut and project method \cite{Meyer1972}.
Note that the relation between fractal geometry and ergodic theory is a
well-established field of research which goes back to Billinglsey and 
Furstenberg \cite{Billingsley1960,Furstenberg1967}.
For results on this relation in the context of quasicrystals and, more
generally, aperiodic order, we refer the interested reader, for instance, to 
\cite{Fogg2002,KellendonkLenzSavinien2015,Julien2017} and references therein. 

Before we introduce amorphic complexity and discuss our main results in more detail, let us briefly clarify some basic terminology.
A triple $(X,G,\alpha)$ is called a \emph{(topological) dynamical system}
if $X$ is a compact metric space (endowed with a metric $d$), $G$ is a 
topological group and $\alpha$ is a continuous action of $G$ on $X$ by
homeomorphisms (continuity of $\alpha$ is understood as continuity of the
map $G\times X \ni(g,x)\mapsto \alpha(g)(x)\in X$).
In the following, we use the shorthand $gx$ instead of 
$\alpha(g)(x)$ for the action of $g\in G$ on $x\in X$ via $\alpha$.
Likewise, we may occasionally keep the action $\alpha$ implicit and simply refer to $(X,G)$ as a dynamical system.

As mentioned before, we throughout assume that $G$
is locally compact, $\sigma$-compact and amenable.
Recall that there is hence a \emph{(left) Følner sequence}, that is, a sequence
$(F_n)_{n\in\N}$ of compact subsets of $G$ having positive Haar measure such that
\begin{align}\label{eq:Foelner_seq}
    \lim_{n\to\infty}\frac{m(KF_n\triangle F_n)}{m(F_n)}
        =0\quad\textnormal{for every compact }K\subseteq G,
\end{align}
where $\triangle$ denotes the symmetric difference and $m$ is a
\emph{(left) Haar measure} of $G$ (we may synonymously write
$\abs{F}$ for the Haar measure $m(F)$ of a measurable set $F\ssq G$) \cite[Theorem~3.2.1]{EmersonGreenleaf1967}.
We will also make use of the existence of \emph{right Følner sequences} which fulfil a condition analogous to \eqref{eq:Foelner_seq} with the left Haar
measure and the multiplication from the left replaced by the
right Haar measure and multiplication from the right, respectively.
However, we would like to stress that in the following, each F\o lner sequence
is assumed to be a left F\o lner sequence if not stated otherwise.
Given a (left or right) F\o lner sequence $\mc F=(F_n)$, the \emph{(upper) asymptotic density} of a measurable subset $E\subseteq G$
with respect to $\Fol$ is defined as
\begin{equation}\label{eq: upper density}
	\ad_{\Fol}(E)=\varlimsup\limits_{n\to\infty}\frac{\abs{E\cap F_n}}{\abs{F_n}}.
\end{equation}

Let us next turn to the definition of amorphic complexity
of a dynamical system $(X,G)$ with respect to a F\o lner sequence $\Fol=(F_n)_{n\in\N}$ in $G$.
Given $x,y\in X$, $\delta>0$, we set
\begin{equation*}
	\dsep(X,G,\delta,x,y)=\left\{t\in G\;|\;d(tx,ty)\geq\delta\right\}.
\end{equation*}
For $\nu\in(0,1]$, we say that $x$ and $y$ are \emph{$(\delta,\nu)$-separated}
with respect to $\Fol$ if 
\begin{equation*}
	\ad_{\Fol}(\dsep(X,G,\delta,x,y))=\varlimsup_{n\to\infty}\frac{\abs{\dsep(X,G,\delta,x,y)\cap F_n}}{\abs{F_n}}\geq\nu.
\end{equation*}
Accordingly, a subset $S\subseteq X$ is said to be \emph{$(\delta,\nu)$-separated} with
respect to $\Fol$ if all distinct points $x,y\in S$ are $(\delta,\nu)$-separated.  
This already yields the first key notion in this work: the {\em (asymptotic) separation number of $(X,G)$ with respect to $\delta>0$ and $\nu\in(0,1]$}, denoted by $\Sep_{\Fol}(X,G,\delta,\nu)$, is the supremum over the cardinalities of all $(\delta,\nu)$-separated sets in $X$.

In general, the asymptotic separation numbers do not have to be finite (even
though $X$ is compact) which immediately gives the following dichotomy: if
$\Sep_{\Fol}(X,G,\delta,\nu)$ is finite for all $\delta,\nu>0$, we say $(X,G)$
has {\em finite separation numbers} with respect to $\Fol$ otherwise, we say it
has {\em infinite separation numbers}.
The first result which we would like to highlight identifies canonical classes
of group actions with infinite and finite separation numbers, respectively.
The corresponding proofs can all be found in Section~\ref{sec: finite vs infinite sep numbers}.

\begin{theorem}
	Suppose $(X,G)$ is a dynamical system with $X$ a compact metric space
	and $G$ a locally compact, $\sigma$-compact and amenable group.
	\mbox{}
	\begin{enumerate}
		\item[(i)]
			If $(X,G)$ is weakly mixing with respect to a
			non-trivial $G$-invariant probability measure, then $(X,G)$ has infinite
			separation numbers with respect to every F\o lner sequence.  Likewise, if
			$G$ allows for a uniform lattice and $(X,G)$ has positive topological
			entropy, then $(X,G)$ has infinite separation numbers with respect to every
			F\o lner sequence.
		\item[(ii)]
			If $G$ is unimodular, that is, there is a two-sided (left and right)
			F\o lner sequence and $(X,G)$ is minimal, then $(X,G)$ has finite
			separation numbers with respect to every F\o lner
			sequence if and only if $(X,G)$ is mean equicontinuous.
	\end{enumerate}
\end{theorem}

It is worth mentioning that the class of mean equicontinuous systems comprises
all Delone dynamical systems associated to regular model sets, see also
Section~\ref{sec: ac and regular model sets}.  For further examples of mean
equicontinuous actions of groups different from $\Z$, we refer the reader to the
literature
\cite{Robinson1996,Robinson1999,Cortez2006,Vorobets2012,Garcia-Ramos2017,Glasner2018,LcackaStraszak2018,FuhrmannKwietniak2020,GroegerLukina2021,FuhrmannGroegerLenz2022}.

If $(X,G)$ has finite separation numbers, we are in a position to obtain finer
information by studying the scaling behaviour of the separation numbers as the
separation frequency $\nu$ tends to zero.  Here, we may in principle consider
arbitrary growth rates.  So far, however, previous results indicate that
polynomial growth is the most relevant, see
\cite{FuhrmannGroegerJaeger2016,GroegerJaeger2016,FuhrmannGroeger2020} for
$G=\Z$.  With this in mind, we define the \emph{lower} and \emph{upper amorphic
  complexity} of $(X,G)$ with respect to $\Fol$ as
\begin{equation*}
	\uac_{\Fol}(X,G)=\adjustlimits\sup_{\delta>0}\varliminf_{\nu\to 0}
		\frac{\log \Sep_{\Fol}(X,G,\delta,\nu)}{-\log \nu}
	\quad\textnormal{and}\quad
	\oac_{\Fol}(X,G)=\adjustlimits\sup_{\delta>0}\varlimsup_{\nu\to 0}
		\frac{\log\Sep_{\Fol}(X,G,\delta,\nu)}{-\log \nu}. 
\end{equation*}
In case that both values coincide, we call $\ac_{\Fol}(X,G)=\uac_{\Fol}(X,G)=
\oac_{\Fol}(X,G)$ the {\em amorphic complexity} of $(X,G)$ with respect to $\Fol$.
It is convenient to set $\ac_{\Fol}(X,G)=\infty$ if $(X,G)$ has infinite separation numbers 
with respect to $\Fol$.
We discuss the most basic properties of amorphic complexity---including its invariance under conjugacy---in Section~\ref{sec: basic props ac}.

The next question we address is to which extent the asymptotic
separation numbers and amorphic complexity depend on the particular Følner
sequence $\Fol$.  
In general, we cannot rule out different amorphic complexities
with respect to different F\o lner sequences.  
In fact, this problem already
occurs when $G=\Z$, see Section~\ref{sec: independence Folner seq}.  With the
next theorem, however, we provide a sufficient criterion for the independence
from $\Fol$.  Here, we say a dynamical system $(X,G)$ is \emph{pointwise
  uniquely ergodic} if every orbit closure is uniquely ergodic.  A strengthening
of the following statement and its proof can be found in Section~\ref{sec:
  independence Folner seq}.
\begin{theorem}
 Let $(X,G)$ be a dynamical system whose product $(X^2,G)$ is pointwise uniquely ergodic.
 Then $(X,G)$ has infinite separation numbers with respect to some F\o lner sequence if and only if it has infinite separation numbers with respect to all F\o lner sequences.
 Moreover, $\oac_\Fol(X,G)$ and $\uac_\Fol(X,G)$ are independent of the particular F\o{}lner sequence $\Fol$.
\end{theorem}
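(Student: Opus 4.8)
The plan is to use pointwise unique ergodicity to replace the Følner-dependent upper density $\ad_{\Fol}(\dsep(X,G,\delta,x,y))$ by an intrinsic, Følner-independent quantity, up to an ambiguity supported on a ``boundary sphere'' that will be absorbed by the supremum over $\delta$ in the definition of amorphic complexity. First I would identify the separation set with a Birkhoff-type average on the product. Writing $A_\delta=\{(u,v)\in X^2:d(u,v)\ge\delta\}$ and letting $G$ act diagonally on $X^2$, one has $\mathbf 1_{A_\delta}(t(x,y))=\mathbf 1_{\dsep(X,G,\delta,x,y)}(t)$, so that $\abs{\dsep(X,G,\delta,x,y)\cap F_n}/\abs{F_n}$ is precisely the average of $\mathbf 1_{A_\delta}$ over $F_n$ along the orbit of $(x,y)$. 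Let $\mu_{x,y}$ denote the unique $G$-invariant measure on the orbit closure $\overline{G(x,y)}\ssq X^2$, which exists by hypothesis. Invoking the uniform ergodic theorem for uniquely ergodic amenable actions on continuous functions bounding $\mathbf 1_{A_\delta}$ from above, and on continuous functions bounding the indicator of the open set $\{d>\delta\}$ from below, together with the regularity of $\mu_{x,y}$, I obtain the two-sided estimate
\[
    \mu_{x,y}(\{d>\delta\})\;\le\;\ad_{\Fol}\bigl(\dsep(X,G,\delta,x,y)\bigr)\;\le\;\mu_{x,y}(\{d\ge\delta\}),
\]
valid for every Følner sequence $\Fol$. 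The point is that both outer bounds are manifestly independent of $\Fol$; only the mass of the sphere $\{d=\delta\}$ remains undetermined.

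Next I would introduce two Følner-independent comparison quantities. For $\delta>0$ and $\nu\in(0,1]$ let $S^-(\delta,\nu)$, respectively $S^+(\delta,\nu)$, be the supremum of cardinalities of sets $S\ssq X$ such that $\mu_{x,y}(\{d>\delta\})\ge\nu$, respectively $\mu_{x,y}(\{d\ge\delta\})\ge\nu$, for all distinct $x,y\in S$. The estimate above immediately yields $S^-(\delta,\nu)\le\Sep_{\Fol}(X,G,\delta,\nu)\le S^+(\delta,\nu)$ for every $\Fol$. Moreover, since $\{d>\delta'\}\supseteq\{d\ge\delta\}$ whenever $\delta'<\delta$, monotonicity of $\mu_{x,y}$ gives $S^+(\delta,\nu)\le S^-(\delta',\nu)$, and hence the chain
\[
    \Sep_{\Fol}(X,G,\delta,\nu)\;\le\;S^+(\delta,\nu)\;\le\;S^-(\delta',\nu)\;\le\;\Sep_{\Fol'}(X,G,\delta',\nu)
    \qquad(\delta'<\delta)
\]
for any two Følner sequences $\Fol,\Fol'$. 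This sandwich is the heart of the argument: passing to a slightly smaller scale $\delta'<\delta$ compensates for any discrepancy caused by the choice of Følner sequence.

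From here both assertions follow by routine monotonicity. For the first, if $\Sep_{\Fol}(X,G,\delta,\nu)=\infty$ for some $\delta,\nu$, then the chain forces $\Sep_{\Fol'}(X,G,\delta',\nu)=\infty$ for every $\delta'<\delta$ and every $\Fol'$, so infinite separation numbers with respect to one Følner sequence entail infinite separation numbers with respect to all of them; the converse is trivial. For the second, dividing by $-\log\nu>0$ and passing to $\varlimsup_{\nu\to0}$ turns the chain into $\phi_{\Fol}(\delta)\le\phi_{\Fol'}(\delta')$ for all $\delta'<\delta$, where $\phi_{\Fol}(\delta)=\varlimsup_{\nu\to0}\log\Sep_{\Fol}(X,G,\delta,\nu)/(-\log\nu)$. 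Since $\phi_{\Fol'}(\delta')\le\oac_{\Fol'}(X,G)$ for every $\delta'$, this gives $\phi_{\Fol}(\delta)\le\oac_{\Fol'}(X,G)$ for every $\delta$, whence $\oac_{\Fol}(X,G)\le\oac_{\Fol'}(X,G)$; exchanging the roles of $\Fol$ and $\Fol'$ yields equality. Replacing $\varlimsup$ by $\varliminf$ throughout gives the same for $\uac_{\Fol}(X,G)$.

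I expect the main obstacle to lie in the two-sided density estimate rather than in the bookkeeping. Concretely, the difficulty is that one does \emph{not} obtain a single Følner-independent value for the density, because the sphere $\{d=\delta\}$ may carry positive $\mu_{x,y}$-mass; one must therefore resist trying to prove the sphere is null for the relevant $\delta$ and instead exploit the monotonicity in $\delta$ to absorb this gap via the scale shift $\delta'<\delta$. Justifying the estimate itself requires care in interchanging the limit defining the density with the continuous approximation of the discontinuous indicator $\mathbf 1_{A_\delta}$, which is where unique ergodicity of the orbit closure $\overline{G(x,y)}$ (and not merely some weaker property of $X^2$ as a whole) enters decisively.
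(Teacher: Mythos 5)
Your proposal is correct, and its foundation coincides with the paper's own starting point: the paper likewise uses pointwise unique ergodicity of $(X^2,G)$ to sandwich $\ad_{\Fol}(\dsep(X,G,\delta,x,y))$ between $\mu_{(x,y)}(\{d>\delta\})$ and $\mu_{(x,y)}(\{d\geq\delta\})$, by averaging continuous minorants and majorants $h_\ell\leq\mathbf 1_{[\delta,1]}\leq H_\ell$ of the indicator along the orbit of $(x,y)$. The two arguments diverge in how they handle the possible mass of the sphere $\{d=\delta\}$. The paper exploits the fact that, for each fixed pair, only countably many $\delta$ carry positive mass, and from this derives the stronger Theorem~\ref{theorem:independence Folner sequences sep numbers and ac}: in the infinite case a single good $\delta_0\leq\delta$ is chosen to work simultaneously for a countable family of arbitrarily large finite separated sets, and in the finite case the separation numbers themselves agree, $\Sep_{\Fol^0}=\Sep_{\Fol^1}$, for all $\delta$ outside a countable set and all but countably many $\nu$---this last part requiring the jump-point (singular/regular) analysis of the integer-valued monotone functions $\Sep_{\Fol^i}(X,G,\cdot,\cdot)$. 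You bypass exact equality altogether: your F\o lner-independent quantities $S^{\pm}(\delta,\nu)$ and the scale shift $\delta'<\delta$ give the chain $\Sep_{\Fol}(X,G,\delta,\nu)\leq S^+(\delta,\nu)\leq S^-(\delta',\nu)\leq\Sep_{\Fol'}(X,G,\delta',\nu)$, and the supremum over $\delta$ in the definitions of $\oac_{\Fol}$ and $\uac_{\Fol}$ absorbs the shift; no countability bookkeeping is needed anywhere. What each approach buys: yours is shorter, more robust (only monotonicity and the two-sided estimate enter), and proves exactly the statement in question, including the equivalence for infinite separation numbers; the paper's is heavier but yields the finer, independently interesting conclusion that the separation numbers themselves---not merely their polynomial growth rates in $\nu$---are generically independent of the F\o lner sequence.
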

It is worth mentioning that mean equicontinuous systems verify the assumptions of the above theorem
\cite[Theorem~1.2]{FuhrmannGroegerLenz2022}.

Finally, we apply amorphic complexity to the dynamics of
regular model sets.  Before we come to the precise formulation, we need to
introduce some terminology.  In doing so, we restrict to a rather brief
description of the most essential notions and refer the reader to
Section~\ref{sec: ac and regular model sets} for the details.  A \emph{cut and
  project scheme} is a triple $(G,H,\cL)$, where $G$ and $H$ are locally compact
abelian groups and $\cL$ is an irrational lattice in $G\times H$.  Together with
a compact subset $W=\overline{\operatorname{int}(W)}\ssq H$---referred to as a
\emph{window}---$(G,H,\mc L)$ defines
a particular instance of a Delone set, a so-called \emph{model} set
  \[
  \oplam(W) = \pi_G((G\times W)\cap \mc L),
  \]
  where $\pi_G:G\times H\to G$ denotes the canonical projection. We call $W$ as
well as $\oplam(W)$ \emph{regular} if $\partial W$ is of zero Haar measure and
say $W$ is \emph{irredundant} if $ \{ h \in H\;|\;h+W=W \} = \{0\}$.  Now, as
$\oplam(W)$ is a subset of $G$, $G$ naturally acts on $\oplam(W)$ by
translations.  It turns out that the closure of all translated copies of
$\oplam(W)$ is compact (in a suitable topology on subsets of $G$).  Denoting
this closure by $\Omega(\oplam(W))$, we arrive at the Delone dynamical system
$(\Omega(\oplam(W)),G)$ associated to the model set $\oplam(W)$.  We obtain

\begin{theorem}
  Let $(G,H,\cL)$ be a cut and project scheme with $W\ssq H$ a regular irredundant window and suppose $G$ and $H$ are second countable.
  Then for every  Følner sequence $\Fol$ in $G$, we get
  \begin{equation*}
    \oac_{\Fol}(\Omega(\oplam(W)),G)\leq\frac{\uboxdim(H)}{\uboxdim(H)-\uboxdim(\partial W)},
  \end{equation*}
  assuming that $\uboxdim(H)$ is finite.
\end{theorem}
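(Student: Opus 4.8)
The plan is to translate the computation of separation numbers into a packing problem in the internal space $H$, where the separation \emph{frequency} of two points is controlled by the Haar measure of a neighbourhood of $\partial W$; the upper box dimension then converts this volume estimate into the desired exponent. Throughout write $D:=\uboxdim(H)$ and $s:=\uboxdim(\partial W)$ (the letter $d$ being reserved for the metric), and note that we may assume $s<D$, since otherwise the right-hand side is infinite. I would begin from the structure theory of regular model sets: $(\Omega(\oplam(W)),G)$ is a strictly ergodic almost $1$-$1$ extension of its maximal equicontinuous factor, the compact torus $\mathbb{T}=(G\times H)/\mc L$, on which $G$ acts by translation in the $G$-coordinate. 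Since $\partial W$ has zero Haar measure, the torus parametrisation $\tau\colon\Omega(\oplam(W))\to\mathbb{T}$ is injective over the full-measure set of its continuity points, and irredundancy of $W$ guarantees that $\tau$ together with the internal coordinate faithfully separates points. Concretely, after applying a suitable group element to align $G$-coordinates, two points $x,y\in\Omega(\oplam(W))$ are (away from the exceptional fibres) distinguished by a single internal difference $h\in H$, and the local patterns of $tx$ and $ty$ inside a fixed $R$-ball differ precisely when the internal parameter of $t$ lands in the symmetric difference $W\triangle(W+h)$ at one of the finitely many lattice points meeting the ball.

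The next step is the frequency estimate. Choosing $R=R(\delta)$ so that $d(tx,ty)\geq\delta$ forces a pattern difference inside the $R$-ball, a union bound over the boundedly many relevant lattice points, combined with unique ergodicity---which makes $\ad_{\Fol}$ equal to the normalised Haar measure for \emph{every} Følner sequence---yields, with $(\partial W)_\epsilon$ denoting the $\epsilon$-neighbourhood of $\partial W$ in $H$,
\[
  \ad_{\Fol}\bigl(\dsep(X,G,\delta,x,y)\bigr)
  \;\leq\;C(\delta)\,\abs{W\triangle(W+h)}
  \;\leq\;C(\delta)\,\abs{(\partial W)_{\abs{h}}},
\]
the last inequality using that translating a set by $h$ alters it only within distance $\abs{h}$ of its boundary. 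In particular, if $x$ and $y$ are $(\delta,\nu)$-separated, then $\abs{(\partial W)_{\abs{h}}}\geq\nu/C(\delta)$.

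I would then feed in the fractal-geometric input. By definition of upper box dimension, for every $\eta>0$ one has $\abs{(\partial W)_\epsilon}\leq\epsilon^{D-s-\eta}$ for all small $\epsilon$, while the packing number of a bounded region of $H$ at scale $\rho$ is at most $\rho^{-(D+\eta)}$ for small $\rho$. The previous paragraph therefore forces the internal coordinates of a $(\delta,\nu)$-separated set to be pairwise $\rho$-separated with $\rho\gtrsim\nu^{1/(D-s-\eta)}$; since by compactness of $\mathbb{T}$ these coordinates may be taken in a fixed bounded fundamental domain, on which $\tau$ is boundedly finite-to-one, the packing bound gives
\[
  \Sep_{\Fol}(X,G,\delta,\nu)
  \;\lesssim\;\rho^{-(D+\eta)}
  \;\lesssim\;\nu^{-\frac{D+\eta}{D-s-\eta}}.
\]
Taking logarithms, dividing by $-\log\nu$, letting $\nu\to0$, then $\eta\to0$, and finally taking the supremum over $\delta$ (the constants $C(\delta)$ contribute only additively in the logarithm and hence vanish after division by $-\log\nu$) produces the bound $\oac_{\Fol}(\Omega(\oplam(W)),G)\leq D/(D-s)$, as claimed.

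The hard part will be the first step: making the reduction to a single internal coordinate $h$ rigorous in the presence of the exceptional fibres over the null set related to $\partial W$, and verifying \emph{uniformly} over the chosen points that the metric condition $d(tx,ty)\geq\delta$ is genuinely governed by the symmetric difference $W\triangle(W+h)$. This demands careful use of irredundancy (to preclude any collapse of internal coordinates) and of the almost $1$-$1$ structure (to ensure $\tau$ is essentially injective on separated sets), and it is precisely here that second countability of $G$ and $H$ is needed, in order to fix compatible metrics on the Delone space and on $\mathbb{T}$ with respect to which both the pattern comparison and the box-counting are carried out.
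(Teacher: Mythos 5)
Your proposal follows, in substance, the same route as the paper's proof of Theorem~\ref{t.ac_bound_for_cps}: parametrise the hull by the torus $\T=(G\times H)/\cL$, bound the separation frequency of a pair of parametrised points by the Haar measure of $W\Delta(W+h'-h)$, which sits inside $\overline{B_H(\partial W,d_H(h,h'))}$, using unique ergodicity of $(\T,G)$ to make the density F\o lner-independent, and then convert this volume estimate into the exponent $\uboxdim(H)/(\uboxdim(H)-\uboxdim(\partial W))$ via Minkowski's characterisation of box dimension. (Like the paper, you should first reduce to van Hove F\o lner sequences, via F\o lner-independence for mean equicontinuous systems; the Delone-counting statements such as Corollary~\ref{c.no_separation} are only available for those.) The one structural difference is dual bookkeeping: the paper covers compact sets $A\ssq G$, $B\ssq H$ with $\pi(A\times B)=\T$ by a $\delta/2$- resp.\ $\eps$-grid and exhibits a $(\delta,\nu)$-spanning set, whereas you bound $(\delta,\nu)$-separated sets directly by a packing argument in $H$; these are equivalent up to the standard spanning/separation comparison, so this buys nothing essential either way.

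Two steps, as you literally wrote them, would fail and must be repaired by exactly the devices the paper uses. First, the internal coordinates of a $(\delta,\nu)$-separated set need \emph{not} be pairwise $\rho$-separated: the frequency estimate controls the internal displacement only for pairs whose $G$-coordinates are $\delta/2$-close (this is the hypothesis $d_G(g,g')\le \delta/2$ in the Claim inside the paper's proof, and it is genuinely needed --- an aperiodic $\Gamma$ and a large translate $\Gamma-g_0$ are $(\delta,\nu)$-separated for suitable $\delta,\nu$ yet have identical internal coordinates). The fix is to pigeonhole the separated set over a $\delta/2$-net of $A$ first; this costs a factor $N_{\delta/2}(A)=C(\delta)$ which, as you observe, disappears after dividing by $-\log\nu$. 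Second, the torus parametrisation is not ``boundedly finite-to-one'': fibres over parameters $[g,h]_\cL$ for which $\partial(W+h)$ meets $L^*$ can be infinite. What is true --- and is the statement you actually need, and also gesture at with ``essentially injective on separated sets'' --- is that no two elements of one fibre can be $(\delta,\nu)$-separated for any $\nu>0$: by \eqref{eq:flowMorphism} every $\Gamma$ in the fibre over $[g,h]_\cL$ is sandwiched between $\oplam(\inte(W)+h)-g$ and $\oplam(W+h)-g$, and regularity of $W$ together with Corollary~\ref{c.no_separation} forces all such $\Gamma$ to have identical separation frequencies, so a separated set meets each fibre at most once. With these two corrections your argument closes and coincides with the paper's proof.
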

Here, $\uboxdim(\cdot)$ denotes the upper box dimension, see Section \ref{sec: ac and regular model sets} for the details.
Let us remark that we further show that the above estimates are sharp in that they are realised by particular model sets.
In conclusion, we obtain that every value in
$[1,\infty)$ can be attained by amorphic complexity of minimal systems.

Motivated by the above results, we finish with the following question.
\begin{center}
	\textit{Given a locally compact, $\sigma$-compact and amenable
	group acting minimally on a compact metric space.
	Which values can amorphic complexity attain?}
\end{center}
In particular, for minimal $\Z$- or $\R$-actions, we conjecture that amorphic
complexity cannot take values in $(0,1)$.
Indeed, this complexity gap was recently established for subshifts associated
to primitive constant length substitutions \cite{FuhrmannGroeger2020}
and is a classical phenomenon which is well known to occur for polynomial
entropy of minimal symbolic subshifts. 
For non-minimal $\Z$-actions, however, it was recently shown that all values
in $(0,1)$ can be obtained by amorphic complexity, see \cite{Kulczycki2021,Kulczycki2022}.

\subsubsection*{Acknowledgments}

This project has received funding from the European Union's Horizon 2020
research and innovation program under the Marie Sk\l{}odowska-Curie grant agreement
No 750865.
The research leading to these results has received funding from the Norwegian
Financial Mechanism 2014-2021 via the POLS grant no. 2020/37/K/ST1/02770.
Furthermore, it received support by the DFG Emmy-Noether grant Ja 1721/2-1
and DFG Heisenberg grant Oe 538/6-1.
DK was supported by the National Science Centre, Poland, grant no. 2018/29/B/ST1/01340. 
GF, MG and DK would like to thank the Mathematisches Forschungsinstitut Oberwolfach
for its enormous hospitality during a Research in Pairs stay (R1721) at
the MFO in October 2017 where many ideas of this work were developed.
This work was finished during a visit
of GF and MG to the Jagiellonian University in Kraków in September
2020, which was also supported by the National Science Centre, Poland,
grant no.\ 2018/29/B/ST1/01340.

%-----------------------------------------------------------------------

\section{Basic properties of amorphic complexity}\label{sec: basic props ac}

In this section, we collect the most basic properties of amorphic complexity.
In particular, given a group $G$ which allows for a lattice $\mc L$,
we discuss how amorphic complexity of a $G$-action relates to amorphic complexity of the associated $\mc L$-action.

The proof of the following statement is verbatim as the proofs of
 \cite[Proposition~3.4 \& Proposition~3.9]{FuhrmannGroegerJaeger2016}.
\begin{proposition}\label{prop: ac topological invariant and power rule}
    Let $(X,G)$ and $(Y,G)$ be dynamical systems.
    We have:
    \begin{enumerate}
	\item[(a)] If $(Y,G)$ is a factor of $(X,G)$, then
	\[
		\uac_{\Fol}(Y,G)\leq\uac_{\Fol}(X,G)
		\quad\textnormal{and}\quad
		\oac_{\Fol}(Y,G)\leq\oac_{\Fol}(X,G).
	\]
	In particular, (upper and lower) amorphic complexity
	is a topological invariant.
	\item[(b)] We have that
	\begin{align*}
		\uac_{\Fol}(X\times Y,G)\geq \uac_{\Fol}(X,G)+\uac_{\Fol}(Y,G),\quad
		\oac_{\Fol}(X\times Y,G)\leq\oac_{\Fol}(X,G)+\oac_{\Fol}(Y,G).
	\end{align*}
	In particular, if $\ac_\Fol(X,G)$ and $\ac_\Fol(Y,G)$ exist, then $\ac_\Fol(X\times Y,G)$ exists as well.
    \end{enumerate}
\end{proposition}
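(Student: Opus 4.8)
The plan is to transfer the one-dimensional arguments, with the Følner density $\ad_\Fol$ in place of the Banach density; throughout I fix one Følner sequence $\Fol$, and the only features of $\ad_\Fol$ I need are that it is monotone under set inclusions and finitely subadditive, both immediate from the $\varlimsup$ in its definition.

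For part (a), the plan is to lift separated sets from the factor to the total space. Let $\pi\colon X\to Y$ be a factor map. As a continuous surjection of compact metric spaces it is uniformly continuous, so for each $\delta>0$ there is $\delta'>0$ with $d_Y(\pi x,\pi x')\ge\delta\Rightarrow d_X(x,x')\ge\delta'$. Given a $(\delta,\nu)$-separated set $S\ssq Y$, I would pick a preimage $\tilde y\in\pi^{-1}(y)$ for each $y\in S$ and set $\tilde S=\{\tilde y:y\in S\}$. Equivariance $t\,\pi(\tilde y)=\pi(t\tilde y)$ together with the above implication yields the inclusion $\dsep(Y,G,\delta,y,y')\ssq\dsep(X,G,\delta',\tilde y,\tilde y')$ for distinct $y,y'$, so by monotonicity of $\ad_\Fol$ the set $\tilde S$ is $(\delta',\nu)$-separated, and $\abs{\tilde S}=\abs{S}$ since $\pi$ is a map. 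Hence $\Sep_\Fol(Y,G,\delta,\nu)\le\Sep_\Fol(X,G,\delta',\nu)$. Dividing by $-\log\nu$, passing to $\varliminf_{\nu\to0}$ (resp.\ $\varlimsup_{\nu\to0}$) and bounding the right-hand side by the supremum over all $\delta'>0$ gives $\uac_\Fol(Y,G)\le\uac_\Fol(X,G)$ (resp.\ for $\oac$) after the supremum over $\delta$ on the left. Topological invariance follows at once, since a conjugacy is a factor map in both directions.

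For the lower bound in part (b), I would equip $X\times Y$ with the maximum metric, so that $\dsep(X\times Y,G,\delta,(x,y),(x',y'))=\dsep(X,G,\delta,x,x')\cup\dsep(Y,G,\delta,y,y')$; choosing the maximum metric is harmless as amorphic complexity is a conjugacy invariant. If $S_X$ and $S_Y$ are $(\delta,\nu)$-separated, then any two distinct points of $S_X\times S_Y$ differ in some coordinate, whose factor contributes density at least $\nu$ to the union; thus $S_X\times S_Y$ is $(\delta,\nu)$-separated and $\Sep_\Fol(X\times Y,G,\delta,\nu)\ge\Sep_\Fol(X,G,\delta,\nu)\cdot\Sep_\Fol(Y,G,\delta,\nu)$. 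Taking logarithms, dividing by $-\log\nu$ and using superadditivity of $\varliminf$ gives the estimate for each $\delta$; since $\delta\mapsto\varliminf_{\nu\to0}\log\Sep_\Fol(\,\cdot\,,\delta,\nu)/(-\log\nu)$ is non-increasing, its supremum is the limit as $\delta\to0$, and these limits add to yield $\uac_\Fol(X\times Y,G)\ge\uac_\Fol(X,G)+\uac_\Fol(Y,G)$.

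I expect the upper bound to be the main obstacle. For distinct $p=(x,y),p'=(x',y')$ the union identity gives $\nu\le\ad_\Fol(\dsep(X,G,\delta,x,x'))+\ad_\Fol(\dsep(Y,G,\delta,y,y'))$, so one factor is separated at level $\nu/2$; but a naive two-colouring of pairs only produces a Ramsey bound, far too weak for additivity. The fix is a fibre argument. Take a maximal $T\ssq S$ whose $X$-projection is injective and $(\delta,\nu/4)$-separated, so $\abs{T}\le\Sep_\Fol(X,G,\delta,\nu/4)$; by maximality every $p\in S$ admits some $\rho=(x_t,y_t)\in T$ with $\ad_\Fol(\dsep(X,G,\delta,x,x_t))<\nu/4$, which defines a map $f\colon S\to T$. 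For distinct $p,p'\in f^{-1}(\rho)$ the triangle inequality gives $\dsep(X,G,2\delta,x,x')\ssq\dsep(X,G,\delta,x,x_t)\cup\dsep(X,G,\delta,x_t,x')$, hence $\ad_\Fol(\dsep(X,G,2\delta,x,x'))<\nu/2$; if $S$ is $(2\delta,\nu)$-separated this forces $\ad_\Fol(\dsep(Y,G,2\delta,y,y'))>\nu/2$, so $f^{-1}(\rho)$ projects to a $(2\delta,\nu/2)$-separated set in $Y$ and $\abs{f^{-1}(\rho)}\le\Sep_\Fol(Y,G,2\delta,\nu/2)$. Summing over fibres gives $\Sep_\Fol(X\times Y,G,2\delta,\nu)\le\Sep_\Fol(X,G,\delta,\nu/4)\cdot\Sep_\Fol(Y,G,2\delta,\nu/2)$. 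The constants are harmless: replacing $\nu$ by a fixed multiple does not change $\varlimsup_{\nu\to0}\log\Sep_\Fol(\,\cdot\,,\nu)/(-\log\nu)$, and rescaling $\delta$ by a constant does not affect the supremum over $\delta$; so this yields $\oac_\Fol(X\times Y,G)\le\oac_\Fol(X,G)+\oac_\Fol(Y,G)$. Finally, when $\ac_\Fol(X,G)$ and $\ac_\Fol(Y,G)$ exist, the lower and upper bounds squeeze $\uac_\Fol$ and $\oac_\Fol$ of the product between the common value $\ac_\Fol(X,G)+\ac_\Fol(Y,G)$, so $\ac_\Fol(X\times Y,G)$ exists and equals this sum.
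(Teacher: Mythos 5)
Your proposal is correct, and for part (a) and the lower product bound in (b) it is essentially the paper's own argument: the paper proves this proposition by citing \cite{FuhrmannGroegerJaeger2016} (Propositions~3.4 and 3.9), whose proofs are precisely your lifting-of-separated-sets argument via uniform continuity of the factor map, and the observation that a product of $(\delta,\nu)$-separated sets is $(\delta,\nu)$-separated for the maximum metric, with the monotone-in-$\delta$ limit handling the supremum. Where you genuinely depart from the paper is the upper bound $\oac_{\Fol}(X\times Y,G)\leq\oac_{\Fol}(X,G)+\oac_{\Fol}(Y,G)$: the cited proof goes through spanning numbers---one first shows that $\Span_{\Fol}$ and $\Sep_{\Fol}$ yield the same complexity (Lemma~3.1 and Corollary~3.2 of \cite{FuhrmannGroegerJaeger2016}, an equivalence this paper invokes again in Section~5), and then notes that if $S_X$ and $S_Y$ are $(\delta,\nu)$-spanning, then $S_X\times S_Y$ is $(\delta,2\nu)$-spanning for the maximum metric, since $\dsep(X\times Y,G,\delta,\cdot,\cdot)$ is the union of the coordinate $\dsep$-sets, giving submultiplicativity of spanning numbers directly. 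Your fibre-counting argument---a maximal $T\ssq S$ with injective, $(\delta,\nu/4)$-separated $X$-projection, the triangle-inequality inclusion $\dsep(X,G,2\delta,x,x')\ssq\dsep(X,G,\delta,x,x_\rho)\cup\dsep(X,G,\delta,x_\rho,x')$, and the resulting bound $\abs{f^{-1}(\rho)}\leq\Sep_{\Fol}(Y,G,2\delta,\nu/2)$---is a correct, self-contained substitute: it stays entirely within separated sets at the cost of the constants $2\delta$, $\nu/2$, $\nu/4$, which, as you rightly note, are washed out by the logarithmic scaling in $\nu$ and the supremum over $\delta$; it also handles the degenerate case of infinite separation numbers trivially, since then the right-hand side is infinite by convention. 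In short, the spanning-set route is shorter once the $\Span$/$\Sep$ equivalence is in place, while yours avoids introducing spanning numbers altogether. One cosmetic point: the printed statement of (b) contains a typo ($\oac_{\Fol}(X,G)+\oac_{\Fol}(X,G)$ should read $\oac_{\Fol}(X,G)+\oac_{\Fol}(Y,G)$); your proof establishes the intended inequality.
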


Before we proceed with further properties of amorphic complexity,
we take a closer look at certain particularly well-behaved F\o{}lner sequences.
Recall that a \emph{van Hove sequence} $(A_n)_{n\in \N}$ in $G$
is a sequence of compacta $A_n\subseteq G$ of positive Haar measure such that
\[
 \lim_{n\to \infty}\frac{m\big(\partial_K A_n)}{m(A_n)}=0,
\]
for every compact set $K\subseteq G$ with $e\in K$,
where $\partial_K A_n\=KA_n\setminus \operatorname{int}\big(\bigcap_{g\in K} gA_n\big)$
(see \cite[Appendix~3]{Tempelman1992} and \cite{Strungaru2005} for
further reference).
It is not hard to see that every van Hove sequence is a Følner sequence.
In fact, it holds
\begin{proposition}[{\cite[Appendix 3.K]{Tempelman1992}}]\label{prop: tempelman}
 Let $G$ be a locally compact $\sigma$-compact amenable topological group.
 A sequence $(A_n)$ of compact subsets of $G$ is a van Hove sequence if and only if
 it is a F\o{}lner sequence and
 \begin{align}\label{eq: sufficient condition for Folner to be Hove}
 \lim_{n\to \infty}\frac{m(\partial_U A_n)}{m(A_n)}=0,
\end{align}
for some open neighbourhood $U$ of the neutral element $e$ in $G$.
\end{proposition}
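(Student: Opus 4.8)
The statement is due to Tempelman, but I would nonetheless give a direct, self-contained argument. The starting point is the \emph{monotonicity} of the boundary operator: if $K\subseteq K'$, then $KA\subseteq K'A$ while $\bigcap_{g\in K'}gA\subseteq\bigcap_{g\in K}gA$, so that $\partial_KA\subseteq\partial_{K'}A$ for any set $A$. This observation immediately settles the forward implication. Indeed, a van Hove sequence is a Følner sequence because for compact $K$ with $e\in K$ one has $A_n\subseteq KA_n$ and $\operatorname{int}(\bigcap_{g\in K}gA_n)\subseteq A_n$, whence $KA_n\triangle A_n=KA_n\setminus A_n\subseteq\partial_KA_n$; and to produce a neighbourhood $U$ as required, I would pick any precompact open $U\ni e$ (possible since $G$ is locally compact) and combine monotonicity with the van Hove property of the compact set $\overline U$ to get $m(\partial_UA_n)\leq m(\partial_{\overline U}A_n)=o(m(A_n))$.

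The substance lies in the converse: assuming $(A_n)$ is Følner and that $m(\partial_UA_n)=o(m(A_n))$ for one fixed open $U\ni e$, I would show $m(\partial_KA_n)=o(m(A_n))$ for every compact $K\ni e$. Fix such a $K$. Since $\{kU:k\in K\}$ is an open cover of the compact set $K$, choose $k_1,\dots,k_m\in K$ with $K\subseteq\bigcup_{i=1}^m k_iU$. I would then use the inclusion $\partial_KA_n\subseteq(KA_n\setminus A_n)\cup\bigl(A_n\setminus\operatorname{int}(\bigcap_{g\in K}gA_n)\bigr)$, valid because $\operatorname{int}(\bigcap_{g\in K}gA_n)\subseteq A_n$. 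The outer term satisfies $m(KA_n\setminus A_n)\leq m(KA_n\triangle A_n)=o(m(A_n))$ directly by the Følner property.

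For the inner term, write $B_n=\bigcap_{u\in U}uA_n$. The inclusion $K\subseteq\bigcup_ik_iU$ gives $\bigcap_{g\in K}gA_n\supseteq\bigcap_ik_iB_n$, and passing to interiors—using that the interior of a \emph{finite} intersection is the intersection of the interiors, and that left translation is a homeomorphism—yields $\operatorname{int}(\bigcap_{g\in K}gA_n)\supseteq\bigcap_ik_i\operatorname{int}(B_n)$. Hence $A_n\setminus\operatorname{int}(\bigcap_{g\in K}gA_n)\subseteq\bigcup_i\bigl(A_n\setminus k_i\operatorname{int}(B_n)\bigr)$, and for each $i$ I would estimate $m(A_n\setminus k_i\operatorname{int}(B_n))\leq m(A_n\setminus k_iA_n)+m\bigl(k_i(A_n\setminus\operatorname{int}(B_n))\bigr)$. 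The first summand is $o(m(A_n))$ by the Følner property applied to the singleton $\{k_i\}$, while by left-invariance of the Haar measure the second equals $m(A_n\setminus\operatorname{int}(B_n))\leq m(\partial_UA_n)=o(m(A_n))$, using $A_n\subseteq UA_n$ so that $A_n\setminus\operatorname{int}(B_n)\subseteq\partial_UA_n$. Summing the finitely many contributions keeps everything $o(m(A_n))$, which completes the proof.

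The main obstacle—and the point requiring the most care—is the bookkeeping around the interior appearing in the definition of $\partial_K$: one must route every estimate through $\operatorname{int}(\cdot)$ rather than through the bare intersections, so that the topological skin $\bigcap_{g}gA_n\setminus\operatorname{int}(\bigcap_{g}gA_n)$, which may in principle carry positive measure, never has to be controlled; and one must simultaneously exploit left-invariance of $m$ to absorb the translating elements $k_i$. The finiteness of the cover, guaranteed by compactness of $K$ together with openness of $U$, is precisely what prevents the accumulated Følner error terms from blowing up.
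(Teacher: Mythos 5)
The paper gives no proof of Proposition~\ref{prop: tempelman} at all --- it is imported as a black box from Tempelman's book via the citation \cite[Appendix 3.K]{Tempelman1992} --- so your proposal is not a variant of the paper's argument but a genuinely self-contained replacement for the reference. Your converse direction, which is where the substance lies, is correct: covering the compact set $K$ by finitely many translates $k_iU$ of the single privileged neighbourhood $U$, routing every estimate through $\operatorname{int}(B_n)$ with $B_n=\bigcap_{u\in U}uA_n$ (so that the possibly non-negligible set $\bigcap_{g}gA_n\setminus\operatorname{int}\bigl(\bigcap_{g}gA_n\bigr)$ never needs to be controlled), and absorbing the translating elements $k_i$ via left-invariance of $m$ together with the Følner property for the singletons $\{k_i\}$ all check out, as do the auxiliary facts you invoke (monotonicity of $\partial_KA$ in $K$, the inclusion $\bigcap_{g\in K}gA_n\supseteq\bigcap_i k_iB_n$, interiors of finite intersections, and $A_n\setminus\operatorname{int}(B_n)\subseteq\partial_UA_n$). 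What this buys over the paper's treatment is transparency: it shows the equivalence needs nothing beyond compactness, left-invariance and elementary set bookkeeping.

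One step in the forward direction needs patching. You verify the Følner condition only for compact $K$ containing $e$, whereas the paper's definition \eqref{eq:Foelner_seq} quantifies over \emph{every} compact $K\subseteq G$; for $K\not\ni e$ the identity $KA_n\triangle A_n=KA_n\setminus A_n$ breaks down, and the term $A_n\setminus KA_n$ is not covered by your estimate. The repair is routine and uses only tools already present in your proof: for nonempty compact $K$, first $KA_n\setminus A_n\subseteq (K\cup\{e\})A_n\setminus A_n\subseteq\partial_{K\cup\{e\}}A_n$, which the van Hove property controls; second, fixing any $k\in K$, one has $A_n\setminus KA_n\subseteq A_n\setminus kA_n=k\bigl(k^{-1}A_n\setminus A_n\bigr)$, and since $k^{-1}A_n\setminus A_n\subseteq\partial_{\{e,k^{-1}\}}A_n$, left-invariance of the Haar measure gives $m(A_n\setminus KA_n)\leq m\bigl(k^{-1}A_n\setminus A_n\bigr)\leq m\bigl(\partial_{\{e,k^{-1}\}}A_n\bigr)=o(m(A_n))$. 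With these two lines inserted, the forward implication, and hence the whole proof, is complete.
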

\begin{remark}\label{rem: discrete groups and van Hove}
 In particular, if $G$ is discrete, then every F\o{}lner sequence in $G$ is, in fact,
 a van Hove sequence.
\end{remark}

It is well known that every locally compact $\sigma$-compact amenable group
allows for a van Hove sequence.
For the convenience of the reader, we prove the following (possibly well-known) refinement of this statement which we need in the sequel.

\begin{proposition}\label{prop: Folner=van Hove}
 Let $G$ be a locally compact $\sigma$-compact amenable topological group.
 Suppose $(F_n)$ is a F\o{}lner sequence in $G$ and $B$ is a compact
 neighbourhood of $e$.
 Then $A_n\=BF_n$ defines a van Hove sequence in $G$ with
 $\ad_{(A_n)}(E) = \ad_{(F_n)}(E)$ for every measurable $E\ssq G$.
\end{proposition}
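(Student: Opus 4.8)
The plan is to verify the three assertions of the proposition separately, all of which hinge on the single observation that $m(A_n)=(1+o(1))\,m(F_n)$. Indeed, since $e\in B$ we have $F_n\ssq BF_n=A_n$, and $A_n$ is compact with $m(A_n)\ge m(F_n)>0$, so it is a legitimate candidate for a van Hove sequence. Moreover $A_n\setminus F_n\ssq BF_n\setminus F_n$, whence $m(A_n)-m(F_n)=m(A_n\setminus F_n)\le m(BF_n\triangle F_n)=o(m(F_n))$ by the Følner property of $(F_n)$ applied to the compact set $B$. I would record this estimate once and reuse it throughout.

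First I would check that $(A_n)$ is again a Følner sequence. Fix a compact $K\ssq G$. Using $F_n\ssq A_n$ and $A_n=BF_n$, one verifies the inclusions $KA_n\setminus A_n\ssq (KB)F_n\setminus F_n$ and (because $KA_n\supseteq KF_n$) $A_n\setminus KA_n\ssq (BF_n\setminus F_n)\cup(F_n\setminus KF_n)$. Each right-hand side is contained in a symmetric difference $K'F_n\triangle F_n$ with $K'$ one of the compact sets $KB,\,B,\,K$, hence of measure $o(m(F_n))=o(m(A_n))$; adding the two contributions gives $m(KA_n\triangle A_n)=o(m(A_n))$, as required.

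To obtain the van Hove property I would then invoke Proposition~\ref{prop: tempelman}: as $(A_n)$ is now known to be Følner, it suffices to exhibit one open neighbourhood $U$ of $e$ with $m(\partial_U A_n)/m(A_n)\to0$. Since $e\in U$, the inner set $\operatorname{int}(\bigcap_{g\in U}gA_n)$ lies in $A_n\ssq UA_n$, so $m(\partial_U A_n)=m(UA_n)-m(\operatorname{int}\bigcap_{g\in U}gA_n)$. Bounding the first term is routine: taking $U$ relatively compact we get $UA_n\ssq(\overline U B)F_n$ and hence $m(UA_n)\le m((\overline U B)F_n)=(1+o(1))m(F_n)$ by the Følner property. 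The lower bound for the intersection is the main obstacle, because a naive choice of $U$ fails: points of $B$ near its boundary leave $B$ under arbitrarily small left translations. The remedy is to choose $U$ together with a second open neighbourhood $W$ of $e$ satisfying $U^{-1}W\ssq B$, which is possible by local compactness and continuity of $(g,x)\mapsto g^{-1}x$ at $(e,e)$ (where the value is $e\in\operatorname{int}(B)$). Then $W\ssq\bigcap_{g\in U}gB$, so the open set $WF_n$ obeys $F_n\ssq WF_n\ssq(\bigcap_{g\in U}gB)F_n\ssq\bigcap_{g\in U}gA_n$ and therefore $WF_n\ssq\operatorname{int}\bigcap_{g\in U}gA_n$, giving $m(\operatorname{int}\bigcap_{g\in U}gA_n)\ge m(F_n)$. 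Combining the two bounds yields $m(\partial_U A_n)\le(1+o(1))m(F_n)-m(F_n)=o(m(A_n))$.

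Finally, the equality of densities follows directly from $m(A_n)=(1+o(1))m(F_n)$. Since $E\cap F_n\ssq E\cap A_n$ with $m((E\cap A_n)\setminus(E\cap F_n))\le m(A_n\setminus F_n)=o(m(F_n))$, an elementary estimate shows that the sequences $m(E\cap A_n)/m(A_n)$ and $m(E\cap F_n)/m(F_n)$ differ by $o(1)$; consequently their upper limits coincide, which is precisely the claim $\ad_{(A_n)}(E)=\ad_{(F_n)}(E)$.
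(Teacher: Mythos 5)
Your proof is correct and follows essentially the same route as the paper's: establish $m(A_n\triangle F_n)=o(m(F_n))$ from the F\o{}lner property of $(F_n)$, deduce from it both the density equality and that $(A_n)$ is F\o{}lner, and then reduce the van Hove property to Proposition~\ref{prop: tempelman} by producing an open neighbourhood $U$ of $e$ for which $\partial_U A_n$ is absorbed into $\overline{U}BF_n\setminus F_n$. The single point of divergence is how the key inclusion $F_n\ssq\operatorname{int}\bigl(\bigcap_{g\in U}gA_n\bigr)$ is obtained: the paper proves $e\in\operatorname{int}\bigl(\bigcap_{g\in U}gB\bigr)$ by a net-based contradiction argument, whereas you get the same fact directly from joint continuity of $(g,x)\mapsto g^{-1}x$ at $(e,e)$ by choosing $U$ and $W$ with $U^{-1}W\ssq B$ --- an equivalent, arguably cleaner, argument.
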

\begin{proof}
 The last part follows from $E\cap A_n\ssq (E\cap F_n)\cup (F_n\triangle A_n)$ and
  \begin{align}\label{eq: An = Fn}
   0\leq\lim_{n\to\infty}m(A_n\triangle F_n)/m(A_n)\leq\lim_{n\to\infty}m(BF_n\triangle F_n)/m(F_n)=0,
  \end{align}
 which is a consequence of the fact that $(F_n)$ is a F\o{}lner sequence and
 $F_n\ssq BF_n=A_n$.
 
 For the first part, we make use of Proposition~\ref{prop: tempelman}.
 To that end,  observe that for every (compact) $K\ssq G$ we have 
 $KA_n \triangle A_n \ssq (KA_n \triangle F_n) \cup (F_n \triangle A_n)
 =(KBF_n \triangle F_n) \cup (F_n \triangle A_n)$.
 Due to \eqref{eq: An = Fn}
 and the fact that $(F_n)$ is a F\o lner sequence, this gives
 that $(A_n)$ is a F\o lner sequence, too.
 To see \eqref{eq: sufficient condition for Folner to be Hove}, we need the following
 \begin{claim}
  There is a relatively compact open neighbourhood $U$ of $e$ such that $F_n\ssq \operatorname{int}\big(\bigcap_{g\in U} gA_n\big)$ for each $n\in \N$.
 \end{claim}
\proof[Proof of the claim]
First, observe that $\operatorname{int}\big(\bigcap_{g\in U} gBF_n\big)\supseteq\operatorname{int}\big( \bigcap_{g\in U} gB\big )F_n$.
To prove the claim, it hence suffices to show
that there is $U$ with $e\in \operatorname{int}\big( \bigcap_{g\in U} gB\big )$.

For a contradiction, suppose $e\in\overline{\bigcup_{g\in U} g B^c}$ for every $U$ in the open neighbourhood filter $\mc U$ of $e$.
In other words, suppose there is a net $(g_U)_{U\in \mc U}$ with $g_U\in U$ (so that $g_U\to e$)  and a net $(h_U)_{U\in \mc U}$ in $B^c$ such that $g_Uh_U\to e$.
This, however, implies $h_U\to e$ which contradicts $e\in\operatorname{int}(B)$.
Therefore, there is $U\in \mc U$ with $e\in\operatorname{int}\big( \bigcap_{g\in U} gB\big )$.
Clearly, $U$ can be chosen open and relatively compact.
\roundqed\smallskip
 
Now, pick some $U$ as in the above claim.
As $(F_n)$ is a F\o{}lner sequence, we have 
\begin{align*}
	m(\partial_U A_n)/m(A_n)\leq m(UA_n\setminus F_n)/m(F_n)\leq m(\overline{U}BF_n\setminus F_n)/m(F_n) \stackrel{n\to\infty}{\longrightarrow} 0.
\end{align*}
Finally, it follows from Proposition~\ref{prop: tempelman} that $(A_n)$ is a van Hove
sequence.
\end{proof}

For the next statement, recall that a \emph{uniform lattice} $\cL$ in $G$ is
a discrete subgroup of $G$ such that there exists a measurable precompact
subset $C\ssq G$, referred to as \emph{fundamental domain}, 
with $G=\bigsqcup_{\lam\in \cL}C\lam$ and $m(C)>0$.
With the lattice $\mc L$ being a subgroup of $G$, we have a naturally
defined dynamical system $(X,\mc L)$ and it turns out that amorphic
complexity is well behaved when going from $(X,G)$ over to $(X,\mc L)$.
\begin{lemma}\label{lem: ac of group and lattice coincide}
	Assume $(X,G)$ is a dynamical system and $G$ allows for a uniform
	lattice $\cL$.
	Then for every F\o{}lner sequence $\Fol$ in $G$ there is a F\o{}lner
	sequence $\Fol'$ in $\cL$ such that
	\begin{align*}
		\uac_{\Fol}(X,G)=\uac_{\Fol'}(X,\cL)\qquad \textnormal{and}\qquad
		\oac_{\Fol}(X,G)=\oac_{\Fol'}(X,\cL).
	\end{align*}
	Furthermore, $(X,G)$ has infinite separation numbers with respect to $\mc F$
	if and only if $(X,\mc L)$ has infinite separation numbers with respect to $\mc F'$.
\end{lemma}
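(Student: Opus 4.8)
The plan is to transport the computation from $G$ to $\cL$ through the tiling $G=\bigsqcup_{\lam\in\cL}C\lam$ given by the fundamental domain $C$. First I would replace $\Fol=(F_n)$ by the van Hove sequence $(BF_n)$ of Proposition~\ref{prop: Folner=van Hove} (with $B:=\overline C\cup\{e\}$); this preserves all asymptotic densities and hence all separation numbers and both amorphic complexities, so I may assume $\Fol$ itself is van Hove. I then set $F_n':=\{\lam\in\cL:C\lam\cap F_n\neq\emptyset\}=\cL\cap C^{-1}F_n$ and take $\Fol':=(F_n')$. Since $\cL$ is a uniform lattice, the modular function of $G$ is trivial on $\cL$, so the tiles satisfy $m(C\lam)=m(C)$ for every $\lam\in\cL$; in particular $\{C\lam\}_{\lam\in\cL}$ is a partition of $G$ into disjoint sets of equal measure $m(C)$. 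This yields the elementary counting bound $\#(\cL\cap S)\,m(C)\le m(CS)$ for every measurable $S\ssq G$ and, via $F_n\ssq CF_n'\ssq CC^{-1}F_n$ together with the Følner property, the asymptotics $\#F_n'\sim m(F_n)/m(C)$.

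Next I would verify that $\Fol'$ is a Følner sequence in $\cL$ (with counting measure). For $\lam_0\in\cL$ one has $\lam_0F_n'\setminus F_n'\ssq\cL\cap(\lam_0C^{-1}F_n\setminus C^{-1}F_n)$, and since $C(\lam_0C^{-1}F_n\setminus C^{-1}F_n)\ssq C\lam_0C^{-1}F_n\setminus F_n$, the counting bound gives $\#(\lam_0F_n'\setminus F_n')\,m(C)\le m\big(\overline{C\lam_0C^{-1}}F_n\setminus F_n\big)=o(m(F_n))$ by the Følner property of $(F_n)$; the term $F_n'\setminus\lam_0F_n'$ is treated identically after multiplying by $\lam_0^{-1}$. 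Dividing by $m(C)$ and using $\#F_n'\sim m(F_n)/m(C)$ shows $\#(\lam_0F_n'\,\triangle\,F_n')=o(\#F_n')$, so $\Fol'$ is indeed Følner. This is the step I expect to be the main obstacle: because the tiling is by right multiplication whereas the Følner condition concerns left multiplication, the non-commutativity must be absorbed into the compact thickening $\overline{C\lam_0C^{-1}}$ and then killed by the (van Hove) Følner property, rather than by any direct translation invariance.

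The dynamical core consists of two inclusions between distortion sets, both from continuity of the action. Writing $t=c\lam$ with $c\in C$ and $\lam\in\cL$, we have $d(tx,ty)=d(c(\lam x),c(\lam y))$. Uniform continuity of the action on the compact set $\overline C\times X$ produces a modulus $\eta(\delta)>0$ with $d(cp,cq)\ge\delta\Rightarrow d(p,q)\ge\eta(\delta)$ for all $c\in\overline C$, whence $\dsep(X,G,\delta,x,y)\ssq C\cdot\dsep(X,\cL,\eta(\delta),x,y)$. Conversely, continuity of the action at $e$ (uniform over $X$ by compactness) furnishes a symmetric open neighbourhood $V$ of $e$ with $d(vp,p)<\delta/4$ for all $v\in V$, $p\in X$, which I shrink so that $V^{-1}V\cap\cL=\{e\}$ (possible as $\cL$ is discrete); then the tiles $\{V\lam\}_{\lam\in\cL}$ are disjoint and $V\cdot\dsep(X,\cL,\delta,x,y)\ssq\dsep(X,G,\delta/2,x,y)$. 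Feeding the first inclusion into the density computation (disjoint tiles plus $\#F_n'\sim m(F_n)/m(C)$) gives $\ad_{\Fol}\big(\dsep(X,G,\delta,x,y)\big)\le\ad_{\Fol'}\big(\dsep(X,\cL,\eta(\delta),x,y)\big)$, while the second, combined with the van Hove boundary estimate that all but $o(\#F_n')$ of the $\lam\in F_n'$ satisfy $V\lam\ssq F_n$ (the exceptional tiles $V\lam$ being disjoint and contained in a van Hove boundary $\partial_KF_n$ for a suitable compact $K\ni e$), gives $\ad_{\Fol}\big(\dsep(X,G,\delta/2,x,y)\big)\ge\kappa\,\ad_{\Fol'}\big(\dsep(X,\cL,\delta,x,y)\big)$ with $\kappa:=m(V)/m(C)>0$.

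Translated into separation numbers, these inclusions read $\Sep_{\Fol}(X,G,\delta,\nu)\le\Sep_{\Fol'}(X,\cL,\eta(\delta),\nu)$ and $\Sep_{\Fol'}(X,\cL,\delta,\nu)\le\Sep_{\Fol}(X,G,\delta/2,\kappa\nu)$. Since both $\uac$ and $\oac$ are built from $\sup_{\delta>0}$ and a $\nu\to0$ limit of $\log\Sep_{\Fol}(X,G,\delta,\nu)/(-\log\nu)$, the rescalings $\delta\mapsto\eta(\delta)$ and $\delta\mapsto\delta/2$ are absorbed by the supremum over $\delta$, and the factor $\kappa$ is harmless because $-\log(\kappa\nu)\sim-\log\nu$ as $\nu\to0$. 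The two inequalities thus yield $\uac_{\Fol}(X,G)=\uac_{\Fol'}(X,\cL)$ and $\oac_{\Fol}(X,G)=\oac_{\Fol'}(X,\cL)$. Finally, a $(\delta,\nu)$-separated set in one system is a separated set in the other at the correspondingly rescaled parameters, so $\Sep$ is infinite for some $(\delta,\nu)$ in $(X,G)$ with respect to $\Fol$ if and only if it is infinite for some parameters in $(X,\cL)$ with respect to $\Fol'$, which is the asserted equivalence for infinite separation numbers.
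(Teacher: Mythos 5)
Your argument is correct and reaches the same two inequalities that drive the paper's proof, but it implements the key technical steps differently. The shared skeleton is: pass to a van Hove sequence via Proposition~\ref{prop: Folner=van Hove}, tile $G$ by the fundamental domain, and transfer densities of the sets $\dsep$ through uniform-continuity moduli (your $\eta(\delta)$ is the paper's $\delta^-_\delta$, and your inclusion $\dsep(X,G,\delta,x,y)\ssq C\cdot\dsep(X,\cL,\eta(\delta),x,y)$ is exactly the paper's first inclusion). The genuine differences are two. First, where the paper cites \cite[Lemma~3.2]{Hauser2020} for an \emph{inner and outer} pair of lattice van Hove sequences with $CF_n'\ssq F_n\ssq CF_n''$ and $|F_n'|/|F_n''|\to 1$, you build only the outer sequence $F_n'=\{\lam\in\cL\;|\;C\lam\cap F_n\neq\emptyset\}$ and verify its F\o lner property and the asymptotics $\# F_n'\sim m(F_n)/m(C)$ by hand; this makes your proof self-contained, at the cost of redoing (a piece of) Hauser's lemma. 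Second, for the reverse transfer the paper uses the second modulus $\delta^+_\delta$ to conclude that the \emph{whole} tile $C\lam$ of a separated lattice point is $\delta^+_\delta$-separated, which yields the clean sandwich $\Sep_\Fol(X,G,\delta,\nu)\leq\Sep_{\Fol'}(X,\cL,\delta^-_\delta,\nu)\leq\Sep_\Fol(X,G,\delta^+_\delta,\nu)$ at the \emph{same} frequency $\nu$; you instead guarantee separation only on a small sub-tile $V\lam$ (using the van Hove boundary estimate to discard the $\lam\in F_n'$ with $V\lam\not\ssq F_n$, which indeed works with $K=\overline{V}\,\overline{V}^{-1}\cup\overline{V}\,\overline{C}^{-1}\cup\{e\}$), incurring the loss factor $\kappa=m(V)/m(C)$ in the frequency. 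That loss is harmless for the conclusions, since $-\log(\kappa\nu)\sim-\log\nu$ and infinitude of $\Sep$ transfers either way, but the paper's version is sharper at the level of separation numbers themselves. Two small points to patch: (i) $B=\overline{C}\cup\{e\}$ need not be a compact \emph{neighbourhood} of $e$, as Proposition~\ref{prop: Folner=van Hove} requires (a fundamental domain can have empty interior); simply take any compact neighbourhood of $e$, since nothing in your argument uses a relation between $B$ and $C$. (ii) Your appeal to triviality of the modular function on $\cL$, giving $m(C\lam)=m(C)$ and $m(V\lam)=m(V)$, is the correct justification and is precisely the fact the paper uses silently in $m(CF)=|F|\cdot m(C)$; it holds because a locally compact group admitting a uniform lattice is unimodular, so there is no gap there.
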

\begin{proof}
	We denote the Haar measure on $G$ by $m$ and that on $\cL$ by
        $|\cdot|$. Let $C\ssq G$ be a fundamental domain as in the above
        definition of a uniform lattice.  First, observe that for all $\delta>0$
        there are $\delta^-_\delta,\delta^+_\delta>0$ such that
        for all $x,y\in X$ and $c\in C$ we have
          $d(c^{-1}x,c^{-1}y)\geq \delta^-_\delta$ whenever $d(x,y)\geq \delta$
          and $d(cx,cy)\geq \delta^+_\delta$ whenever $d(x,y)\geq
          \delta^-_\delta$.
        This straightforwardly follows from the precompactness of $C$.
	
	Further, due to Proposition~\ref{prop: Folner=van Hove}, we may assume without loss of generality that
	$\Fol$ is a van Hove sequence.
	Under this assumption, there are van Hove sequences $\Fol'=(F_n')$ and $\Fol''=(F_n'')$ 
	in $\cL$ with $\lim_{n\to \infty} |F_n'|/|F_n''|=1$
	such that $CF_n'$ and $CF_n''$ are von Hove sequences in $G$ and
	$CF_n'\ssq F_n\ssq CF_n''$, see for example \cite[Lemma~3.2]{Hauser2021}.
	We will show that for all $x,y\in X$ and $\delta>0$ we have
	\begin{align}\label{eq: comparison asymptotic densities in G and lattice}
		\ad_\Fol(\dsep(X,G,\delta,x,y))\leq \ad_{\Fol'}(\dsep(X,\cL,\delta^-_\delta,x,y)) 
		\leq \ad_\Fol(\dsep(X,G,\delta^+_{\delta},x,y)).
	\end{align}
	Clearly, this implies that for all $\nu\in (0,1)$ and all $\delta>0$
	\[
		\Sep_\Fol(X,G,\delta,\nu)\leq \Sep_{\Fol'}(X,\cL,\delta^-_\delta,\nu)\leq \Sep_\Fol(X,G,\delta^+_{\delta},\nu)
	\]
	and thus proves the statement.
	
	By definition of $\delta^-_\delta$ and $\delta^+_{\delta}$ and
        since $C$ is a fundamental domain, we have
	\begin{align*}
	\dsep(X,G,\delta,x,y)\subseteq C \dsep(X,\cL,\delta^-_\delta,x,y)\subseteq \dsep(X,G,\delta^+_{\delta},x,y).
	\end{align*}
	Hence, utilizing the fact that for any subset $F\ssq\cL$ we have $m(CF)=|F|\cdot m(C)$, 
	we obtain \eqref{eq: comparison asymptotic densities in G and lattice} from the following computation
	\begin{align*}
		\ad_\Fol(\dsep(X,G,\delta,x,y))&=\varlimsup_{n\to\infty}m(\dsep(X,G,\delta,x,y)\cap
                F_n)/m(F_n)\\ &\leq
                \varlimsup_{n\to\infty}m(C\dsep(X,\cL,\delta^-_\delta,x,y)\cap
                CF_n'')/m(CF_n')\\ &=
                \varlimsup_{n\to\infty}m(C\dsep(X,\cL,\delta^-_\delta,x,y)\cap
                CF_n'')/m(CF_n'')\cdot
                |F_n''|/|F_n'|\\ &=\ad_{\Fol''}(\dsep(X,\cL,\delta^-_\delta,x,y))=\ad_{\Fol'}(\dsep(X,\cL,\delta^-_\delta,x,y))\\ &=
                \varlimsup_{n\to\infty}m(C\dsep(X,\cL,\delta^-_\delta,x,y)\cap
                CF_n')/m(CF_n'')\\ &\leq
                \varlimsup_{n\to\infty}m(\dsep(X,G,\delta^+_{\delta},x,y)\cap
                F_n)/m(F_n)\\ &=\ad_\Fol(\dsep(X,G,\delta^+_{\delta},x,y)).\qedhere
	\end{align*}
\end{proof}
\begin{remark}
 There is no known general characterisation of groups that allow for uniform lattices.
 However, one well-known consequence of the existence of a lattice in a group $G$ is that $G$ is unimodular (for a definition in the context of amenable groups, see the paragraph before Corollary~\ref{cor: unimodular and minimal then mean equicont iff finite sep numbers}).
 
 Prominent examples of groups with lattices are $\R^d$, or the Heisenberg group and, of course, discrete groups (which, in general, may obviously allow for non-trivial lattices).
 A natural example of an amenable group without lattices are the $p$-adic numbers.
\end{remark}

\begin{remark}\mbox{}
\begin{enumerate}
 \item[(a)]
	If $(F_n)$ is a van Hove sequence, then
	the sets $F_n'$ and $F_n''$ in the above proof are explicitly
	given by $F_n'=\{h\in \cL\;|\;Ch\ssq F_n \}$ and $F_n''=\{h\in \cL\;|\;Ch\cap F_n\neq \emptyset\}$, see the 
	proof of \cite[Lemma~3.2]{Hauser2021}.
 \item[(b)] Let us briefly comment on the necessity of the passage through Proposition~\ref{prop: Folner=van Hove} in the above
	proof.
	As mentioned in Remark~\ref{rem: discrete groups and van Hove}, a Følner sequence in a discrete group is necessarily a van Hove sequence.
	Consequently, given a F\o lner sequence $(F_n')$ in the lattice $\cL$ of $G$,
	$(F_n')$ is actually a van Hove sequence and therefore, one can show that
	$(CF_n')$ defines a van Hove sequence in $G$.
	Accordingly, whenever we seek to bound a Følner sequence $(F_n)$ in $G$ from below and above by sequences $(CF_n')$ and $(CF_n'')$ similarly as in the previous proof, we actually bound $(F_n)$ by van Hove sequences.
	It turns out that this implies that $(F_n)$ itself must be van Hove.
	These observations are straightforward (though slightly tedious) to check.
\end{enumerate}		
\end{remark}

%-----------------------------------------------------------------------

\section{On finiteness of separation numbers}\label{sec: finite vs infinite sep numbers}
This section deals with the scope of amorphic complexity.
In particular, we identify mean equicontinuous systems as those systems where separation numbers
are finite with respect to every F\o lner sequence and amorphic complexity may hence be finite itself.
Moreover, we show that positive entropy as well as weak mixing
imply infinite separation numbers.
\subsection{Mean equicontinuity and finite separation numbers}\label{sec: mean equicont finite sep numbers}

We next discuss a natural class of dynamical systems with finite separation numbers:
the class of mean equicontinuous systems, see \cite{Auslander1959,Robinson1996,HaddadJohnson1997,Robinson1999,Cortez2006,Vorobets2012,
DownarowiczGlasner2016,Glasner2018,LcackaStraszak2018,FuhrmannGroeger2020,FuhrmannKwietniak2020,GroegerLukina2021,FuhrmannGroegerLenz2022} 
for numerous examples.
In our discussion of mean equicontinuity, we follow the terminology of \cite{FuhrmannGroegerLenz2022}.
Given a left or right Følner sequence $\Fol$, a system $(X,G)$ is 
\emph{(Besicovitch) $\Fol$-mean equicontinuous} if for all
$\eps>0$ there is $\delta>0$ such that for all $x,y\in X$ with $d(x,y)<\delta$
we have
\[
	D_{\Fol}(x,y)
	\=\varlimsup\limits_{n\to\infty}1/m(F_n)\int\limits_{F_n} d(tx,ty)\,dm(t)<\eps.
\]
In this case, $D_\Fol$ clearly defines a continuous pseudometric on $X$.
Thus, by identifying points $x,y\in X$ with $D_\Fol(x,y)=0$, we obtain a compact
metric space which we denote by $X/D_\Fol$.

Before we proceed, let us briefly recall the concept of the (upper) box dimension
of a compact metric space $(M,d)$.
Given $\eps>0$, we call a subset $S$ of $M$ \emph{$\eps$-separated}
if for all $s\neq s'\in S$ we have $d(s,s')\geq\eps$ and denote
by $M_\eps$ the maximal cardinality of an $\eps$-separated subset of $M$.
It is well known and easy to see that $M_\eps<\infty$ due to compactness.
With this notation, the \emph{upper box dimension} of $M$ is defined as
\begin{align*}\label{eq: definition box dimension}
	\overline\Dim_B(M)=\varlimsup\limits_{\eps\to 0}
		\frac{\log M_\eps}{-\log\eps}.
\end{align*}

Now, for $\Fol$-mean equicontinuous $(X,G)$, we have
\[
	D_{\Fol}(x,y)
	\geq\varlimsup\limits_{n\to\infty}1/m(F_n)\int\limits_{F_n}
		\mathbf{1}_{[\delta,\infty)}(d(tx,ty))\cdot d(tx,ty)\,dm(t)
	\geq\delta\cdot\ad_{\Fol}(\dsep(X,G,\delta,x,y))
\]
for all $\delta>0$ and $x,y\in X$ and hence, $(X/D_\Fol)_{\delta \nu}\geq \Sep_\Fol(X,G,\delta,\nu)$.
It follows

\begin{proposition}\label{prop: mean equicontinuous implies finite sep}
	If $(X,G)$ is $\Fol$-mean equicontinuous for some left or right
	Følner sequence $\Fol$, then it has finite separation numbers 
	with respect to $\Fol$ and
	\begin{equation*}
		\oac_{\Fol}(X,G)\leq\odim_B(X/D_\Fol).
	\end{equation*}
\end{proposition}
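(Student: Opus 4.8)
The plan is to read off the statement directly from the chain of inequalities that immediately precedes it, making the two clauses of the conclusion (finiteness of separation numbers and the bound on upper amorphic complexity) fall out of the single key estimate
\[
	(X/D_\Fol)_{\delta\nu}\geq \Sep_\Fol(X,G,\delta,\nu).
\]
First I would justify this estimate carefully. Fix $\delta>0$ and $\nu\in(0,1]$, and take any $(\delta,\nu)$-separated set $S\subseteq X$. For distinct $x,y\in S$ the displayed lower bound $D_\Fol(x,y)\geq \delta\cdot\ad_\Fol(\dsep(X,G,\delta,x,y))\geq \delta\nu$ holds, so the image of $S$ in the quotient $X/D_\Fol$ is a $\delta\nu$-separated set (in the metric induced by $D_\Fol$) of the same cardinality as $S$ — here one uses that $\Fol$-mean equicontinuity guarantees $D_\Fol$ is a genuine continuous pseudometric so that the quotient $X/D_\Fol$ is a well-defined compact metric space. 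Taking the supremum over all such $S$ gives the key estimate.

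Next I would extract finiteness. Since $X/D_\Fol$ is compact, its $\eps$-separation numbers $(X/D_\Fol)_\eps$ are finite for every $\eps>0$ (as recalled in the excerpt). Applying this with $\eps=\delta\nu>0$ yields $\Sep_\Fol(X,G,\delta,\nu)\leq (X/D_\Fol)_{\delta\nu}<\infty$ for all $\delta>0$ and $\nu\in(0,1]$, which is precisely the definition of finite separation numbers with respect to $\Fol$.

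Finally I would deduce the dimension bound. Taking logarithms in the key estimate and dividing by $-\log\nu>0$ (for small $\nu$) gives
\[
	\frac{\log\Sep_\Fol(X,G,\delta,\nu)}{-\log\nu}
		\leq \frac{\log (X/D_\Fol)_{\delta\nu}}{-\log\nu}.
\]
Writing $\eps=\delta\nu$ so that $-\log\nu=-\log\eps+\log\delta$, and observing that as $\nu\to 0$ we have $\eps\to 0$ with $-\log\nu/(-\log\eps)\to 1$, I would pass to the $\varlimsup$ as $\nu\to 0$ to obtain $\varlimsup_{\nu\to0}\frac{\log\Sep_\Fol(X,G,\delta,\nu)}{-\log\nu}\leq \varlimsup_{\eps\to0}\frac{\log (X/D_\Fol)_\eps}{-\log\eps}=\odim_B(X/D_\Fol)$, uniformly in $\delta$. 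Taking the supremum over $\delta>0$ on the left then yields $\oac_\Fol(X,G)\leq \odim_B(X/D_\Fol)$. The only genuinely delicate point is this last limit comparison: I must confirm that replacing the normalisation $-\log\nu$ by $-\log\eps$ does not change the $\varlimsup$, which holds because the additive constant $\log\delta$ is negligible against $-\log\eps\to\infty$; everything else is a direct unwinding of the definitions.
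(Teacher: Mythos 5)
Your proposal is correct and takes essentially the same route as the paper: the paper also deduces the proposition from the estimate $D_\Fol(x,y)\geq\delta\cdot\ad_{\Fol}(\dsep(X,G,\delta,x,y))$ and the resulting inequality $(X/D_\Fol)_{\delta\nu}\geq\Sep_\Fol(X,G,\delta,\nu)$, leaving the remaining steps implicit. Your write-up merely fills in the bookkeeping the paper omits — compactness of $X/D_\Fol$ giving finiteness, and the reparametrisation $\eps=\delta\nu$ (with $\log\delta$ negligible against $-\log\eps$) giving the box-dimension bound — all of which is handled correctly.
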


It is important to note that if $\Fol$ is a left Følner sequence, then
$D_\Fol$ is not necessarily invariant.
In particular, the equivalence relation defined by $D_\Fol$ may not define
a factor of $(X,G)$ even if $D_\Fol$ is continuous.
However, it is easy to see that $D_\Fol$ is invariant if $\Fol$ is a
right Følner sequence.
We utilize this observation below.

In any case, it is certainly desirable to have an invariant pseudometric
which does not depend on a particular (right) Følner sequence.
To that end, we may consider
\[
	D(x,y)\=\sup\{D_{\Fol}(x,y)\;|\;\Fol\textnormal{ is a left Følner sequence}\}
\]
which is, in fact, invariant (see \cite[Proposition~3.12]{FuhrmannGroegerLenz2022}).
We say $(X,G)$ is \emph{(Weyl) mean equicontinuous} if $D$ is continuous.
\begin{proposition}[{\cite[Proposition~5.7]{FuhrmannGroegerLenz2022}}]\label{prop: right F mean equicont implies mean equicont}
	Suppose $(X,G)$ is $\Fol$-mean equicontinuous for some right Følner
	sequence $\Fol$.
	Then $(X,G)$ is mean equicontinuous.
\end{proposition}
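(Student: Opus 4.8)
The plan is to prove the equivalent statement that the Weyl pseudometric $D$ is continuous, and I would do this by establishing the single inequality $D(x,y)\le D_{\Fol_0}(x,y)$, where $\Fol_0=(F^0_n)$ denotes the given right Følner sequence. Granting this, the hypothesis of $\Fol_0$-mean equicontinuity immediately yields: for every $\eps>0$ there is $\delta>0$ with $d(x,y)<\delta\Rightarrow D(x,y)\le D_{\Fol_0}(x,y)<\eps$. Since $D$ is a pseudometric, the triangle inequality $|D(x,y)-D(x',y')|\le D(x,x')+D(y,y')$ upgrades this diagonal estimate to genuine joint continuity of $D$ on $X\times X$, which is precisely Weyl mean equicontinuity. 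Throughout I use the two facts supplied above: that $D_{\Fol_0}$ is invariant because $\Fol_0$ is a right Følner sequence, and that $D$ is invariant.

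For the mechanism I would exploit left-invariance of the Haar measure together with the (left) Følner property. Writing $B_n(p,q)=\frac{1}{m(F^0_n)}\int_{F^0_n}d(sp,sq)\,dm(s)$, so that $\varlimsup_n B_n(p,q)=D_{\Fol_0}(p,q)$, the invariance of $D_{\Fol_0}$ gives $\varlimsup_n B_n(gx,gy)=D_{\Fol_0}(x,y)$ for \emph{every} $g\in G$. The key elementary observation is that for a left Følner sequence $\Fol=(F_n)$ and any $s\in G$, after the substitution $u=st$ and using $m(sF_n)=m(F_n)$,
\[
 \frac{1}{m(F_n)}\int_{F_n} d(stx,sty)\,dm(t)=\frac{1}{m(F_n)}\int_{F_n} d(tx,ty)\,dm(t)+o(1),
\]
the error being controlled by $m(sF_n\triangle F_n)/m(F_n)$, which by the compact form \eqref{eq:Foelner_seq} of the Følner condition is uniform for $s$ in any fixed compact set. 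Thus shifting the base point along a \emph{left} Følner window is asymptotically invisible; this is the dual of the invariance already available for $\Fol_0$ and is what lets one rewrite $D$ through the averaging functionals $B_n$ along the orbit.

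The remaining step, which I expect to be the main obstacle, is to convert these pointwise-in-$g$ statements into a bound on the worst-case Weyl average. Using that the supremum of window averages over all translates dominates $D$ and, computed along $\Fol_0$, furnishes the standard worst-case reformulation of the Weyl pseudometric, the target inequality reduces to
\[
 \varlimsup_{n\to\infty}\ \sup_{g\in G} B_n(gx,gy)\ \le\ D_{\Fol_0}(x,y).
\]
Here the supremum effectively ranges over the pairs in the orbit $O=\{(gx,gy):g\in G\}\subseteq X^2$, and by invariance the limit $\varlimsup_n B_n$ is constant, equal to $D_{\Fol_0}(x,y)$, on $O$ and hence, by continuity of $D_{\Fol_0}$, on its closure $\overline O$. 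The crux is therefore to pass from this \emph{pointwise} convergence $B_n\to D_{\Fol_0}$ on the compact set $\overline O$ to \emph{uniform} convergence, so that $\sup_{\overline O}B_n\to D_{\Fol_0}(x,y)$ and the supremum over $g$ cannot inflate the limit.

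This uniformity is genuinely the hard part, and it is where the hypotheses are really used. A naive double averaging of $B_n$ against a second Følner window fails: taming the base-point shift forces one window to be far more invariant than the scale of the other in two incompatible directions, so no single pair of windows works. Instead I would upgrade pointwise to uniform convergence by combining the invariance of $D_{\Fol_0}$ (which pins the limit to a constant on each orbit closure) with a compactness and equicontinuity argument for the continuous functionals $B_n$ on $\overline O$, feeding in the $\Fol_0$-mean equicontinuity to supply the required uniform modulus. The bulk of the technical work sits here; once uniform convergence on $\overline O$ is in hand, the displayed inequality and hence $D\le D_{\Fol_0}$ follow, completing the proof.
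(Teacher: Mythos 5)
You should first note that this paper contains no proof of this statement at all: it is imported verbatim, with citation, from \cite[Proposition~5.8]{FuhrmannGroegerLenz2021}, so there is no in-paper argument to compare against and your proposal must stand on its own. Its opening moves are the natural ones and are correct as far as they go: since $\Fol_0$ is a right F\o lner sequence, $D_{\Fol_0}$ is invariant; the single inequality $D\le D_{\Fol_0}$ together with the hypothesis and the pseudometric triangle inequality would indeed give continuity of $D$; and the left-F\o lner substitution estimate you record is valid, with the error $m(sF_n\triangle F_n)/m(F_n)$ uniform over $s$ in a compact set. You also correctly diagnose why naive double averaging fails: killing the two F\o lner errors requires $n\to\infty$ at fixed $k$ in one place and $k\to\infty$ at fixed $n$ in the other.

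The problem is that the step you defer as ``the bulk of the technical work'' is a genuine gap, and the tool you propose to fill it with does not exist. Besicovitch $\Fol_0$-mean equicontinuity controls only $\varlimsup_k B_k(p,q)$; it provides no uniform-in-$k$ modulus, i.e.\ no implication of the form $d(p,q)<\delta\Rightarrow\sup_k B_k(p,q)<\varepsilon$. Nothing in the hypotheses prevents nearby points from separating strongly on long but finite initial windows before re-synchronizing in density, so the family $\{B_k\}$ need not be equicontinuous on $\overline{O}$, and $\varlimsup_k\sup_{\overline{O}}B_k$ can a priori exceed $\sup_{\overline{O}}\varlimsup_k B_k$; pointwise control of the $\varlimsup$ (which, note, is not pointwise \emph{convergence}) simply does not rule this out. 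A secondary issue: your reduction needs $D(x,y)\le\varlimsup_n\sup_{g}B_n(gx,gy)$, i.e.\ that worst-case averages over translates of the \emph{right} F\o lner windows dominate every \emph{left}-F\o lner Besicovitch average; this ``standard reformulation'' is classical for $G=\Z$ but in the present generality is itself a left/right interchange of exactly the kind the proposition is about, and cannot be invoked for free. The standard way to close the argument avoids uniform convergence entirely and goes through invariant measures: for a left F\o lner sequence $\Fol=(F_n)$, pass to a subsequence along which $D_\Fol(x,y)$ is attained and the empirical measures $\frac{1}{m(F_n)}\int_{F_n}\delta_{(tx,ty)}\,dm(t)$ converge weak-$*$ to some $\mu$; by the left F\o lner property $\mu$ is an invariant measure on $\overline{O}$ and $D_\Fol(x,y)=\int d\,d\mu$. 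Invariance of $\mu$ and Fubini give $\int d\,d\mu=\int B_k\,d\mu$ for every $k$, and then reverse Fatou (the $B_k$ are uniformly bounded) together with the fact that $\varlimsup_k B_k=D_{\Fol_0}$ is \emph{constant}, equal to $D_{\Fol_0}(x,y)$, on $\overline{O}$ --- this is where both the invariance and the assumed continuity of $D_{\Fol_0}$ enter --- yields $\int d\,d\mu\le D_{\Fol_0}(x,y)$. Hence $D\le D_{\Fol_0}$ with no uniformity needed; without some such mechanism, your proof is incomplete at precisely the decisive step.
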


Given a left or right Følner sequence $\Fol$, a system $(X,G)$ is called
\emph{$\Fol$-mean sensitive} if there exists $\eta>0$ such that for every
open set $U\ssq X$ we can find $x,y\in U$ with $D_\Fol(x,y)\ge \eta$.
Moreover, we say $(X,G)$ is \emph{(Weyl) mean sensitive} if there
exists $\eta>0$ such that for every open set $U\ssq X$ we can find $x,y\in U$
with $D(x,y)\ge \eta$.
We have the following direct generalisation of the equivalence of (1) and
(3) in \cite[Proposition 5.1]{LiTuYe2015} whose proof extends almost literally
to the current setting.

\begin{proposition}
	The system $(X,G)$ is $\Fol$-mean sensitive (with respect to a left
	or right Følner sequence $\Fol$) if and only if there is $\eta>0$ such that
	for every $x\in X$ we have that $\{y\in X\;|\;D_\Fol(x,y)\ge\eta\}$
	is residual in $X$.
\end{proposition}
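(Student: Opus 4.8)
The plan is to prove the two implications separately: the backward direction is an immediate consequence of the Baire category theorem, while the forward direction relies on the pseudometric structure of $D_\Fol$ together with a semicontinuity argument. Throughout I use that $X$, being compact metric, is a Baire space, so that residual sets are dense. For ``$\Leftarrow$'', suppose $\eta>0$ is such that $R_x\=\{y\in X\;|\;D_\Fol(x,y)\ge\eta\}$ is residual for every $x\in X$. Then each $R_x$ is dense. Given any nonempty open $U\ssq X$, I would pick $x\in U$ and use density of $R_x$ to find $y\in R_x\cap U$; since $D_\Fol(x,x)=0<\eta$ we automatically have $y\neq x$, and the points $x,y\in U$ witness $\Fol$-mean sensitivity with constant $\eta$.

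For ``$\Rightarrow$'', let $\eta>0$ be a sensitivity constant and set $\eta'\=\eta/3$. The key structural point is that $D_\Fol$ is a \emph{finite} pseudometric on $X$: finiteness is clear from $D_\Fol\le\operatorname{diam} X$, and the triangle inequality follows from that of $d$ together with the subadditivity of $\varlimsup$, with no appeal to mean equicontinuity. Now fix $x\in X$ and an arbitrary nonempty open $V\ssq X$. Mean sensitivity yields $y',y''\in V$ with $D_\Fol(y',y'')\ge\eta$, and the triangle inequality $\eta\le D_\Fol(y',x)+D_\Fol(x,y'')$ forces $D_\Fol(x,y)\ge\eta/2>\eta'$ for at least one $y\in\{y',y''\}\ssq V$. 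This transfer turns a far-apart pair in $V$ into a single point of $V$ that is far from the arbitrary fixed $x$, and shows that $\{y\in X\;|\;D_\Fol(x,y)>\eta'\}$ is dense in $X$ for every $x$.

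It remains to upgrade density to residuality, and here I would expose the semicontinuity hidden inside $D_\Fol$. Writing $g_n(y)\=m(F_n)^{-1}\int_{F_n}d(tx,ty)\,dm(t)$, dominated convergence (with the constant bound $\operatorname{diam} X$) shows each $g_n$ is continuous in $y$, so $\sup_{n\ge N}g_n$ is lower semicontinuous and the sets $W_N\=\{y\in X\;|\;\sup_{n\ge N}g_n(y)>\eta'\}$ are open. Since $D_\Fol(x,\cdot)=\varlimsup_n g_n=\inf_N\sup_{n\ge N}g_n$, a routine check gives the inclusions $\{D_\Fol(x,\cdot)>\eta'\}\ssq\bigcap_N W_N\ssq\{D_\Fol(x,\cdot)\ge\eta'\}$. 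The left inclusion shows that each $W_N$ contains the dense set produced above, hence is itself dense; the right inclusion shows that the target set $\{y\in X\;|\;D_\Fol(x,y)\ge\eta'\}$ contains the dense $G_\delta$ set $\bigcap_N W_N$. By Baire this $G_\delta$ set is residual, and any superset of a residual set is residual, so $\{y\;|\;D_\Fol(x,y)\ge\eta'\}$ is residual for every $x$, which is the desired conclusion.

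The main obstacle I anticipate is precisely this last passage from pointwise density to residuality, because $D_\Fol(x,\cdot)$ is a priori only an infimum of lower semicontinuous functions and so need not be semicontinuous itself. The correct device is to isolate the intermediate $G_\delta$ layer $\bigcap_N W_N$ and observe that it is sandwiched between the strict and non-strict sublevel sets of $D_\Fol(x,\cdot)$; this sidesteps the lack of semicontinuity of the limit. Everything else—the Baire density argument and the triangle-inequality transfer—is routine, and the whole proof is a faithful adaptation of the equivalence of (1) and (3) in \cite[Proposition~5.1]{LiTuYe2015}.
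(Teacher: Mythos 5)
Your proof is correct and takes essentially the approach the paper intends: the paper does not spell out an argument but states that the proof of the equivalence of (1) and (3) in \cite{LiTuYe2015} ``extends almost literally'', and that argument is precisely what you reproduce---the Baire density argument for the backward implication, and for the forward one the triangle inequality for the pseudometric $D_\Fol$ combined with the openness of the sets $W_N$ coming from the continuity of the finite F\o lner averages. In particular, your sandwich $\{D_\Fol(x,\cdot)>\eta'\}\subseteq\bigcap_N W_N\subseteq\{D_\Fol(x,\cdot)\ge\eta'\}$ is exactly the semicontinuity device that makes the Li--Tu--Ye proof carry over to F\o lner averages, so your write-up can stand as the omitted proof.
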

Clearly, if $\ad_{\Fol}(\dsep(X,G,\eta/2,x,y))<\eta/2$, then $D_\Fol(x,y)\leq\eta/2+(1-\eta/2)\cdot\eta/2<\eta$ (assuming, without loss of generality, that the maximal distance of points in $X$ is $1$).
\begin{corollary}\label{cor: mean sensitive infinite sep}
	If a dynamical system $(X,G)$ is $\Fol$-mean sensitive (for a left 
	or right Følner sequence $\Fol$), then it has infinite separation
	numbers with respect to $\Fol$.
\end{corollary}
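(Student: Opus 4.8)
The plan is to combine the characterisation of $\Fol$-mean sensitivity in terms of residual sets (the proposition immediately preceding this corollary) with a Baire category argument, and then to convert the resulting $D_\Fol$-separation into $(\delta,\nu)$-separation by means of the elementary estimate stated just before the corollary. Let $\eta>0$ be the sensitivity constant, so that by that proposition the set $R_x\=\{y\in X\mid D_\Fol(x,y)\ge\eta\}$ is residual in $X$ for every $x\in X$. Since $X$ is a compact, hence complete, metric space, it is a Baire space; consequently any finite intersection of sets of the form $R_x$ is again residual and, in particular, nonempty.

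Using this, I would fix an arbitrary $N\in\N$ and construct inductively a set $\{x_1,\dots,x_N\}\ssq X$ whose points are pairwise $D_\Fol$-separated by $\eta$. Choose $x_1\in X$ arbitrarily and, having selected $x_1,\dots,x_k$, pick any $x_{k+1}\in\bigcap_{i=1}^{k}R_{x_i}$; this is possible because the intersection is nonempty by the Baire argument above. By construction $D_\Fol(x_i,x_j)\ge\eta$ whenever $i\neq j$, and since $D_\Fol(x,x)=0$ the points $x_1,\dots,x_N$ are automatically pairwise distinct.

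It then remains to translate $D_\Fol$-separation into $(\delta,\nu)$-separation, which is exactly the purpose of the estimate recorded before the corollary: assuming without loss of generality that the maximal distance in $X$ equals $1$, that estimate shows that $\ad_\Fol(\dsep(X,G,\eta/2,x,y))<\eta/2$ implies $D_\Fol(x,y)<\eta$. Reading this contrapositively, $D_\Fol(x_i,x_j)\ge\eta$ forces $\ad_\Fol(\dsep(X,G,\eta/2,x_i,x_j))\ge\eta/2$ for all $i\neq j$, so $\{x_1,\dots,x_N\}$ is an $(\eta/2,\eta/2)$-separated set with respect to $\Fol$. As $N$ was arbitrary, this yields $\Sep_\Fol(X,G,\eta/2,\eta/2)=\infty$, and hence $(X,G)$ has infinite separation numbers with respect to $\Fol$. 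I expect no genuine difficulty here; the only points that need slight care are the nonemptiness of finite intersections of residual sets (which is precisely the Baire property of the compact metric space $X$) and the bookkeeping in the density-to-pseudometric estimate, where the normalisation of the diameter is used to bound the contribution of those $t$ with $d(tx,ty)\ge\eta/2$ by $1$.
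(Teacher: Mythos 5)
Your proof is correct and takes essentially the same route the paper intends for this corollary: it combines the residual-set characterisation of $\Fol$-mean sensitivity from the preceding proposition with the displayed estimate relating $\ad_{\Fol}(\dsep(X,G,\eta/2,x,y))$ to $D_\Fol(x,y)$, using a Baire-category induction to produce arbitrarily large pairwise $D_\Fol$-separated (hence $(\eta/2,\eta/2)$-separated) sets. The paper leaves this combination implicit, and your write-up supplies exactly the intended steps.
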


In the following, we take a closer look at the relation between mean
equicontinuity and mean sensitivity in the minimal case.
The proof of the next statement is similar to the one for $\Z$-actions 
\cite[Proposition~4.3 \& Theorem~5.4--5.5]{LiTuYe2015}, see also
\cite[Theorem 8]{Garcia-Ramos2017} \& \cite[Theorem 2.7]{Garcia-RamosMarcus2019}
and  \cite[Corollary~5.6]{ZhuHuangLian2022} for similar statements for
abelian (continuous) groups and for countable amenable groups, respectively.
For the convenience of the reader, we provide a direct proof in the current setting.

\begin{lemma}\label{lem: mean equicont vs sensitive dichotomy}
	Let $(X,G)$ be minimal.
	Then $(X,G)$ is either mean equicontinuous or mean sensitive.
	Furthermore, if $(X,G)$ is mean sensitive, then it is $\Fol$-mean sensitive for
	every right Følner sequence $\Fol$.	
\end{lemma}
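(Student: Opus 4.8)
The plan is to establish the dichotomy first and then upgrade the sensitivity to hold with respect to every right F\o lner sequence. For the dichotomy, I would argue by contradiction: suppose $(X,G)$ is \emph{not} mean sensitive. This means that for every $\eta>0$ there is some nonempty open set $U=U_\eta\ssq X$ on which $D$ is ``$\eta$-small'', i.e.\ $D(x,y)<\eta$ for all $x,y\in U_\eta$. The goal is to show $(X,G)$ is mean equicontinuous, that is, $D$ is continuous. By minimality, the translates $\{gU_\eta\;|\;g\in G\}$ form an open cover of the compact space $X$, so finitely many suffice, say $X=\bigcup_{i=1}^k g_iU_\eta$. Since $D$ is $G$-invariant (this is exactly \cite[Proposition~3.12]{FuhrmannGroegerLenz2021}, invoked just above), each $g_iU_\eta$ is also a set of $D$-diameter less than $\eta$. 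This produces a \emph{finite} cover of $X$ by open sets of small $D$-diameter, which is precisely the statement that the identity map $(X,d)\to(X,D)$ is uniformly continuous; letting $\eta\to 0$ yields continuity of $D$ at every point. Hence failure of mean sensitivity forces mean equicontinuity, and conversely the two notions are manifestly incompatible (mean equicontinuity gives small $D$-balls everywhere, contradicting the uniform lower bound $\eta$ in the definition of mean sensitivity), so the alternative is exclusive.

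For the second assertion, I would assume $(X,G)$ is mean sensitive and fix a right F\o lner sequence $\Fol$; the task is to show it is $\Fol$-mean sensitive, i.e.\ that there is a uniform $\eta>0$ witnessing sensitivity already for the single pseudometric $D_\Fol$ rather than the supremum $D$. The natural tool is Proposition~\ref{prop: right F mean equicont implies mean equicont}: if $(X,G)$ were \emph{not} $\Fol$-mean sensitive for this right F\o lner sequence, I would try to deduce that it is $\Fol$-mean equicontinuous, and then that proposition would immediately give that $(X,G)$ is (Weyl) mean equicontinuous---contradicting mean sensitivity via the dichotomy just proved. Thus the crux is to show, in the minimal setting, that failure of $\Fol$-mean sensitivity implies $\Fol$-mean equicontinuity. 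This runs exactly parallel to the first part: not being $\Fol$-mean sensitive means every $\eta$ admits an open set on which $D_\Fol$ is $\eta$-small, and since $\Fol$ is a right F\o lner sequence, $D_\Fol$ is invariant (as noted in the remark preceding Proposition~\ref{prop: right F mean equicont implies mean equicont}), so the same finite-cover-by-minimality argument upgrades this local smallness to global uniform continuity of $D_\Fol$.

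The main obstacle I anticipate lies in the second part and concerns the quantifier on $\eta$. The definition of $\Fol$-mean equicontinuity requires that for \emph{every} $\eps>0$ there be a $\delta>0$ controlling $D_\Fol$; correspondingly, the negation of $\Fol$-mean sensitivity only asserts that for every $\eta>0$ \emph{some} open set of small $D_\Fol$-diameter exists, and I must verify that the minimality-plus-invariance compactness argument genuinely converts this into uniform continuity for each fixed $\eps$, with the finite subcover depending on $\eps$ but the conclusion holding for all $\eps$. I expect this to be routine once the invariance of $D_\Fol$ under the right F\o lner assumption is used correctly---indeed, this is the single place where the hypothesis that $\Fol$ is a \emph{right} (rather than left) F\o lner sequence is essential, since for a left F\o lner sequence $D_\Fol$ need not be invariant and the finite subcover of $X$ by translates $g_iU$ would not preserve the $D_\Fol$-diameter bound. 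I would therefore flag that the invariance of $D_\Fol$ (respectively $D$) is the load-bearing ingredient, and take care that every application of minimality is paired with invariance of the relevant pseudometric.
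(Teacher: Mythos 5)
Your proposal is correct, and it rests on exactly the same three pillars as the paper's proof: minimality, invariance of $D$ (respectively of $D_\Fol$ when $\Fol$ is a right F\o{}lner sequence), and Proposition~\ref{prop: right F mean equicont implies mean equicont}. The difference is only in the direction of contraposition. You prove ``not mean sensitive $\Rightarrow$ mean equicontinuous'': for each $\eta>0$ take an open set of $D$-diameter less than $\eta$, cover $X$ by finitely many translates (minimality plus compactness), and use invariance to preserve the diameter bound. The paper proves ``not mean equicontinuous $\Rightarrow$ mean sensitive'': it fixes $x$ and $\eta>0$ witnessing discontinuity of $D$ and, for an arbitrary open $U$, uses minimality to find $g$ and $\delta_0$ with $gB_{\delta_0}(x)\ssq U$, so that invariance yields $D(gx,gy_{\delta_0})>\eta$ with both points in $U$. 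The paper's direction needs no covering argument at all, whereas yours requires one step you gloss over: a finite open cover of $X$ by sets of $D$-diameter less than $\eta$ is not ``precisely'' uniform continuity of the identity $(X,d)\to(X,D)$---you still need the Lebesgue number lemma (or an equivalent compactness argument) to produce $\delta>0$ such that any pair with $d(x,y)<\delta$ lies in a single element of the cover. That step is routine, so it is not a genuine gap, but it should be stated. Your second part---contradiction via Proposition~\ref{prop: right F mean equicont implies mean equicont} together with invariance of $D_\Fol$ for right F\o{}lner sequences---is the exact contrapositive of the paper's argument and is equally sound, including your correct identification of where the right (as opposed to left) F\o{}lner hypothesis is indispensable.
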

\begin{proof}
	Suppose $(X,G)$ is not mean equicontinuous.
	That is, there is $x\in X$ and $\eta>0$ such that for all $\delta>0$
	there is $y_\delta\in B_\delta(x)$ with $D(x,y_\delta)>\eta$.
	Now, given any open set $U\ssq X$, there is $g\in G$ and $\delta_0>0$
	such that $g B_{\delta_0}(x)\ssq U$.
	Since $D$ is invariant, we have $D(gx,gy_{\delta_0})=D(x,y_{\delta_0})>\eta$
	which proves the first part.
	
	For the second part, observe that Proposition~\ref{prop: right F mean equicont implies mean equicont}
	gives that for every right Følner sequence $\Fol$ there exist $x\in X$
	and $\eta>0$ such that for all $\delta>0$ there is $y\in B_\delta(x)$
	with $D_\Fol(x,y)>\eta$.
	Since $\Fol$ is assumed to be a right Følner sequence, $D_\Fol$ is
	invariant and we can argue similarly as for $D$ to obtain $\Fol$-mean sensitivity.
\end{proof}

\begin{remark}
	Recall that $G$ acts \emph{effectively} on $X$ if for all $g\in G$ there
	is $x\in X$ such that $gx\neq x$.
	According to \cite[Corollary~7.9]{FuhrmannGroegerLenz2022}, 
	$G$ is necessarily maximally almost periodic (see \cite{FuhrmannGroegerLenz2022}
	and references therein) if $G$ allows for a minimal, mean equicontinuous 
	and effective action on a compact metric space $X$.
	Hence, Lemma~\ref{lem: mean equicont vs sensitive dichotomy} gives that
	every minimal effective action by a group which is not maximally almost
	periodic (such as the \emph{continuous} Heisenberg group $H_3(\R)$)
	is mean sensitive.
\end{remark}

Recall that a locally compact $\sigma$-compact amenable group $G$ is \emph{unimodular}
if and only if $G$ allows for a \emph{two-sided Følner sequence},
that is, a sequence $\Fol$ which is both a left and a right Følner sequence.
In conclusion to the above statements, we obtain
\begin{corollary}\label{cor: unimodular and minimal then mean equicont iff finite sep numbers}
	Suppose $G$ is unimodular and $(X,G)$ is minimal.
	Then $(X,G)$ is mean equicontinuous
	if and only if the separation numbers of $(X,G)$ are finite with respect to
	every left Følner sequence.
\end{corollary}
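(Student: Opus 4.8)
The plan is to prove the two implications separately, leveraging the dichotomy between mean equicontinuity and mean sensitivity (Lemma~\ref{lem: mean equicont vs sensitive dichotomy}) together with the propositions relating these properties to finiteness of separation numbers. The only structural input I would draw from unimodularity is the existence of a two-sided F\o lner sequence; this is precisely what lets me translate between the left-F\o lner hypothesis in the statement and the right-F\o lner formulation of the mean-sensitivity results. For the forward implication, suppose $(X,G)$ is (Weyl) mean equicontinuous, i.e.\ $D$ is continuous. First I would record that $D$ vanishes on the diagonal, since $D_\Fol(x,x)=0$ for every left F\o lner sequence $\Fol$. As $D$ is continuous on the compact metric space $X\times X$, it is uniformly continuous, so for every $\eps>0$ there is $\delta>0$ with $D(x,y)<\eps$ whenever $d(x,y)<\delta$. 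Because $D_\Fol(x,y)\leq D(x,y)$ for every left F\o lner sequence $\Fol$ (as $D$ is the supremum over all such sequences), this shows $(X,G)$ is $\Fol$-mean equicontinuous for each such $\Fol$. Proposition~\ref{prop: mean equicontinuous implies finite sep} then yields finite separation numbers with respect to every left F\o lner sequence, as desired; note that minimality is not needed here.

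For the reverse implication, I would argue by contraposition. Assume $(X,G)$ is not mean equicontinuous. Since $(X,G)$ is minimal, the dichotomy of Lemma~\ref{lem: mean equicont vs sensitive dichotomy} forces $(X,G)$ to be mean sensitive, and the second part of that lemma shows it is then $\Fol$-mean sensitive for \emph{every} right F\o lner sequence $\Fol$. Now I invoke unimodularity: $G$ admits a two-sided F\o lner sequence $\Fol$, which is in particular a right F\o lner sequence, so $(X,G)$ is $\Fol$-mean sensitive. By Corollary~\ref{cor: mean sensitive infinite sep}, $(X,G)$ then has infinite separation numbers with respect to $\Fol$. Since this same $\Fol$ is simultaneously a left F\o lner sequence, the separation numbers fail to be finite with respect to every left F\o lner sequence, which is exactly the negation of the hypothesis.

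The conceptual crux is the bookkeeping between left and right F\o lner sequences. The mean-sensitivity toolkit (the second half of Lemma~\ref{lem: mean equicont vs sensitive dichotomy}, which in turn rests on Proposition~\ref{prop: right F mean equicont implies mean equicont}) only speaks about right F\o lner sequences, because only for those is $D_\Fol$ guaranteed to be invariant; meanwhile the corollary we want is phrased in terms of left F\o lner sequences. Unimodularity is precisely the hypothesis that reconciles the two by furnishing a single sequence that is at once left and right F\o lner, so that the infinite-separation conclusion obtained on the right side lands inside the left-F\o lner family governed by the hypothesis. Beyond the uniform-continuity argument in the forward direction, no delicate estimates are required: everything else is an assembly of the preceding results.
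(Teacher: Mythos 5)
Your proposal is correct and takes essentially the same approach as the paper: both directions rest on Proposition~\ref{prop: mean equicontinuous implies finite sep}, the dichotomy of Lemma~\ref{lem: mean equicont vs sensitive dichotomy} (which itself rests on Proposition~\ref{prop: right F mean equicont implies mean equicont}), Corollary~\ref{cor: mean sensitive infinite sep}, and unimodularity entering solely through the existence of a two-sided F\o lner sequence. The only difference is organizational: you run the reverse implication in contrapositive form, which lets you quote both parts of Lemma~\ref{lem: mean equicont vs sensitive dichotomy} verbatim, whereas the paper argues directly from finite separation numbers to non-$\Fol$-mean-sensitivity and then re-runs the dichotomy argument for the invariant pseudometric $D_\Fol$ before invoking Proposition~\ref{prop: right F mean equicont implies mean equicont}.
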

\begin{proof}
	By definition, mean equicontinuity implies $\Fol$-mean equicontinuity
	with respect to every left Følner sequence.
	Hence, the ``only if''-part follows from Proposition~\ref{prop: mean equicontinuous implies finite sep}.
	
	For the other direction, let $\Fol$ be a two-sided Følner sequence.
	Since we assume the separation numbers with respect to $\Fol$ to be finite,
	we have that $(X,G)$ is not $\Fol$-mean sensitive.
	Since $D_\Fol$ is invariant, we can argue similarly as in 
	Lemma~\ref{lem: mean equicont vs sensitive dichotomy} to obtain that
	$(X,G)$ is $\Fol$-mean equicontinuous.
	Utilizing Proposition~\ref{prop: right F mean equicont implies mean equicont},
	we obtain the desired statement.
\end{proof}

%-----------------------------------------------------------------------

\subsection{Entropy, mixing and infinite separation numbers}\label{sec: entropy mixing infinite sep numbers}

In this section, we discuss how chaotic behaviour---more specifically: weak mixing or positive entropy---implies infinite separation numbers.
Here, we occasionally have to assume that a F\o lner sequence we consider is \emph{tempered}, that is, there is
$C > 0$ such that for all $n$ we have $m(\bigcup_{k<n}F_k^{-1}F_n)<C\cdot m(F_n)$.
It is well known that every F\o lner sequence allows for a tempered subsequence, see 
\cite[Proposition~1.4]{Lindenstrauss2001}.

In line with \cite{GlasnerWeiss2016}, we call an invariant measure $\mu$ of $(X,G)$ \emph{weakly mixing}
if for every system $(Y,G)$ and all of its ergodic measures $\nu$ we have that $\mu\times \nu$ is ergodic for $(X\times Y,G)$.
Hence, if $\mu$ is weakly mixing,
$\mu^m=\bigtimes_{k=1}^m \mu$ is ergodic for $(X^m,G)$ and all $m\in \N$.
\begin{theorem}
	Let $(X,G)$ be a dynamical system with a weakly mixing measure $\mu$ 
	and suppose the support of $\mu$ is not a singleton.
	Then $(X,G)$ has infinite separation numbers with respect to every 
	F\o{}lner sequence.
\end{theorem}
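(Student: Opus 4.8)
The plan is to fix, once and for all, a single pair of parameters $\delta,\nu>0$ and then to exhibit $(\delta,\nu)$-separated subsets of $X$ of every finite cardinality $m\in\N$; this forces $\Sep_{\Fol}(X,G,\delta,\nu)=\infty$ and hence infinite separation numbers. The natural arena is the family of power systems $(X^m,G)$, where weak mixing of $\mu$ pays off through the ergodicity of each $\mu^m$. To fix the scale, observe that since $\operatorname{supp}\mu$ is not a singleton we may choose $p\neq q$ in $\operatorname{supp}\mu$ and set $\delta\=d(p,q)/3>0$. Putting $A\=\{(u,v)\in X^2 : d(u,v)\ge\delta\}$, any $u\in B_\delta(p)$ and $v\in B_\delta(q)$ satisfy $d(u,v)>d(p,q)-2\delta=\delta$, so $B_\delta(p)\times B_\delta(q)\subseteq A$ and therefore $\beta\=\mu^2(A)\ge\mu(B_\delta(p))\mu(B_\delta(q))>0$. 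The parameters $\delta$ and $\nu\=\beta$ will be the ones I use throughout.

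Now fix $m\in\N$. For $1\le i<j\le m$ let $B_{ij}\=\{(x_1,\dots,x_m)\in X^m : d(x_i,x_j)\ge\delta\}$, so that at every point the $\Fol$-average satisfies
\[
 A_n^{ij}(x_1,\dots,x_m)\=\frac{1}{m(F_n)}\int_{F_n}\mathbf 1_{B_{ij}}(tx_1,\dots,tx_m)\,dm(t)=\frac{\abs{\dsep(X,G,\delta,x_i,x_j)\cap F_n}}{\abs{F_n}}.
\]
The decisive input is the mean ergodic theorem along the left Følner sequence $\Fol$. The operators $V_tf(z)\=f(tz)$ are unitary on $L^2(\mu^m)$, and on a coboundary $g-V_sg$ the left-invariant substitution $u=st$ turns the average into the difference of the integrals of $V_ug$ over $F_n$ and over $sF_n$, whose norm is at most $\|g\|\cdot m(F_n\triangle sF_n)/m(F_n)\to0$ by the (left) Følner property. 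Hence $A_n^{ij}$ converges in $L^2(\mu^m)$ to the projection of $\mathbf 1_{B_{ij}}$ onto the $G$-invariant functions. Since $\mu$ is weakly mixing, $\mu^m$ is ergodic, so this projection is the constant $\mu^m(B_{ij})=\mu^2(A)=\beta$, the last equality because $B_{ij}$ depends only on the $i$-th and $j$-th coordinates. I emphasise that only the left Følner property enters here, with no temperedness required, which is precisely why the statement holds for \emph{every} Følner sequence.

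It remains to select one good point. As there are only $\binom{m}{2}$ pairs, I pass to a single subsequence $(n_k)$ along which $A_{n_k}^{ij}\to\beta$ holds $\mu^m$-almost everywhere for all pairs $i<j$ simultaneously. Any point $(x_1,\dots,x_m)$ in the resulting full-measure set then satisfies
\[
 \ad_{\Fol}(\dsep(X,G,\delta,x_i,x_j))=\varlimsup_{n\to\infty}A_n^{ij}\ge\lim_{k\to\infty}A_{n_k}^{ij}=\beta\qquad(1\le i<j\le m).
\]
In particular $x_i\neq x_j$ whenever $i\neq j$, since otherwise the left-hand density would be $0<\beta$. Thus $\{x_1,\dots,x_m\}$ is a $(\delta,\beta)$-separated set of cardinality $m$, giving $\Sep_{\Fol}(X,G,\delta,\beta)\ge m$; letting $m\to\infty$ yields $\Sep_{\Fol}(X,G,\delta,\beta)=\infty$, as desired.

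The main obstacle is the step in the second paragraph: establishing $L^2$-mean convergence of the forward Følner averages $A_n^{ij}$ for an \emph{arbitrary} left Følner sequence and then upgrading it to a single almost-everywhere statement valid for all $\binom{m}{2}$ pairs at once. The remaining ingredients—choosing $\delta$ from the non-trivial support, identifying the limit $\beta$ via ergodicity of $\mu^m$, and the separation-number bookkeeping—are routine.
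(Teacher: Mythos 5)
Your proof is correct, and its overall architecture coincides with the paper's: weak mixing enters only through ergodicity of $\mu^m$ on the power system $(X^m,G)$; the non-degenerate support yields fixed parameters $\delta>0$ and $\beta=\mu^2\bigl(\{(u,v)\in X^2\;|\;d(u,v)\geq\delta\}\bigr)>0$; and suitably generic points of $X^m$ produce $(\delta,\beta)$-separated sets of every finite cardinality. Where you genuinely depart from the paper is in the ergodic theorem you invoke. The paper follows the $\Z$-action proof of \cite[Theorem~2.2]{FuhrmannGroegerJaeger2016}, replacing Birkhoff's theorem by Lindenstrauss' pointwise ergodic theorem \cite{Lindenstrauss2001}, which is only available for \emph{tempered} F\o lner sequences; an arbitrary F\o lner sequence is then handled by passing to a tempered subsequence and observing that, since $\ad_{\Fol}$ is defined via a $\limsup$, separation along a subsequence implies separation along the original sequence. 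You instead use the von Neumann mean ergodic theorem, which holds for \emph{every} left F\o lner sequence with no temperedness hypothesis (your coboundary computation, using $V_tV_s=V_{st}$ and left invariance of the Haar measure, is exactly the standard proof), and you recover pointwise statements by extracting from the $L^2$-convergence a subsequence along which all $\binom{m}{2}$ averages converge $\mu^m$-almost everywhere simultaneously. Note that both arguments still pass through a subsequence and both finish with the same $\limsup$ observation; the trade-off is that your route is elementary and self-contained, avoiding the pointwise ergodic machinery and the temperedness bookkeeping entirely, whereas the paper's version is shorter on the page because it can cite Lindenstrauss' theorem and the published $\Z$-case nearly verbatim.
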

\begin{proof}
 For a tempered F\o{}lner sequence,
 the proof is similar to that of the respective statement for $\Z$-actions (\cite[Theorem~2.2]{FuhrmannGroegerJaeger2016}) if
 we replace Birkhoff's Ergodic Theorem by Lindenstrauss' Pointwise Ergodic Theorem \cite[Theorem~1.2]{Lindenstrauss2001}.
 Here, we have to make use of the ergodicity of $\mu^m$ just as in \cite{FuhrmannGroegerJaeger2016}.
 
 Now, given an arbitrary F\o{}lner sequence, we can always go over to a tempered subsequence 
 (see \cite[Proposition~1.4]{Lindenstrauss2001}).
 This gives infinite separation numbers for a subsequence and hence, due to the $\limsup$ in \eqref{eq: upper density},
 infinite separation numbers for the original sequence.
\end{proof}

We next turn to systems with positive topological entropy.
Our goal is to show
\begin{theorem}\label{thm: positive entropy infinite separation numbers}
 Suppose $G$ allows for a uniform lattice and the dynamical system
 $(X,G)$ has positive topological entropy.
 Then $(X,G)$ has infinite separation numbers with respect to every F\o{}lner sequence in
 $G$.
\end{theorem}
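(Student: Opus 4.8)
The plan is to transfer the whole problem to the uniform lattice $\cL$ and there exploit the combinatorial independence forced by positive entropy. By Lemma~\ref{lem: ac of group and lattice coincide}, the prescribed F\o lner sequence $\Fol$ in $G$ comes with an associated F\o lner sequence $\Fol'$ in $\cL$, and the lemma asserts that $(X,G)$ has infinite separation numbers with respect to $\Fol$ precisely when $(X,\cL)$ has infinite separation numbers with respect to $\Fol'$. Hence it suffices to prove the statement for the countable discrete amenable group $\cL$. The first point to establish is that positive topological entropy is inherited by the lattice action: writing $C$ for a fundamental domain and comparing, exactly as in Lemma~\ref{lem: ac of group and lattice coincide}, the F\o lner sets $F_n'$ of $\cL$ with the sets $CF_n'$ in $G$, one obtains the well-known scaling relation $h_{\mathrm{top}}(X,\cL)=m(C)\cdot h_{\mathrm{top}}(X,G)$. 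Since $m(C)>0$, we only need the resulting equivalence that $(X,\cL)$ has positive entropy.

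It therefore remains to treat a countable discrete amenable $\cL$ acting with positive entropy, and here I would invoke the theory of combinatorial independence (in the spirit of Kerr and Li). Positive entropy produces a non-diagonal independence entropy pair $(x_1,x_2)$, i.e.\ $x_1\neq x_2$ such that, fixing disjoint closed neighbourhoods $\overline{U_1}\ni x_1$ and $\overline{U_2}\ni x_2$ with $\mathrm{dist}(\overline{U_1},\overline{U_2})\geq\delta>0$, the pair $(U_1,U_2)$ admits independence sets of positive density. The crucial upgrade I need is a single \emph{infinite} independence set $J\ssq\cL$ with $\ad_{\Fol'}(J)\geq\nu_0$ for some $\nu_0>0$, extracted from the finite independence sets inside the $F_n'$ by compactness and a diagonal argument. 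For such a $J$, compactness of $X$ together with finite independence gives, for \emph{every} colouring $\sigma\colon J\to\{1,2\}$, a point $x_\sigma$ with $gx_\sigma\in\overline{U_{\sigma(g)}}$ for all $g\in J$.

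The core is then a coding step over the fixed set $J$. Given any $K\in\N$, I would produce $K$ colourings $\sigma^{(1)},\dots,\sigma^{(K)}\colon J\to\{1,2\}$ whose pairwise disagreement sets $D_{ij}=\{g\in J\;|\;\sigma^{(i)}(g)\neq\sigma^{(j)}(g)\}$ all satisfy $\ad_{\Fol'}(D_{ij})\geq\nu_0/4$; this is achieved by choosing the colourings at random and applying a concentration estimate inside each window $F_n'$ followed by a Borel--Cantelli argument along a subsequence realising the upper density of $J$, with a union bound over the finitely many pairs. The corresponding points $x^{(1)},\dots,x^{(K)}$ are then pairwise $(\delta,\nu_0/4)$-separated with respect to $\Fol'$, since on each $g\in D_{ij}$ the two points lie in the $\delta$-separated sets $\overline{U_1}$ and $\overline{U_2}$, so that $\dsep(X,\cL,\delta,x^{(i)},x^{(j)})\supseteq D_{ij}$; in particular the points are automatically distinct. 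As $K$ was arbitrary, $\Sep_{\Fol'}(X,\cL,\delta,\nu_0/4)=\infty$, and the reduction above yields the claim for $(X,G)$.

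The step I expect to be the main obstacle is the passage from positive independence \emph{density} to a single infinite independence set $J$ of positive upper density with respect to the \emph{prescribed} F\o lner sequence $\Fol'$. Working instead with the a priori varying finite independence sets $J_n\ssq F_n'$ would force a delicate exchange of limits, because separation is measured by an upper density along windows that grow with $n$, and one would have to argue that the lower bound on the separation set survives the passage $x_n^{(i)}\to x^{(i)}$. Fixing $J$ once and for all circumvents this, reducing everything to densities of fixed subsets of $\cL$; verifying that the IE-pair machinery genuinely delivers such a $J$ for the given $\Fol'$ is the technical heart of the argument.
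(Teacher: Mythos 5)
Your overall strategy is the same as the paper's: reduce to the uniform lattice $\cL$ via Lemma~\ref{lem: ac of group and lattice coincide} together with an entropy-transfer statement (the paper simply cites \cite[Theorem~5.2]{Hauser2020} rather than the scaling relation you invoke, but this is immaterial), and then, for the countable discrete group $\cL$, combine Kerr--Li independence with a coding argument. Your coding step---random colourings of a fixed independence set $J$, concentration plus Borel--Cantelli along a subsequence realising the upper density of $J$, and a union bound over pairs---is sound and is a legitimate probabilistic alternative to the paper's explicit construction, which uses an uncountable almost disjoint family of subsets of $\N$ together with disjoint finite pieces $G_n$ carved out of the F\o lner sets, and in fact produces an uncountable $(\delta,\nu)$-separated set rather than just arbitrarily large finite ones.

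However, the step you yourself single out as the technical heart is a genuine gap, and the method you propose for closing it does not work. You want a single infinite independence set $J\ssq\cL$ with $\ad_{\Fol'}(J)\geq\nu_0$, ``extracted from the finite independence sets inside the $F_n'$ by compactness and a diagonal argument.'' A pointwise limit (along a subsequence, by compactness of $\{0,1\}^{\cL}$) of finite independence sets $J_n\ssq F_n'$ is indeed again an independence set---independence is a condition on finite subsets, and each finite subset of the limit is eventually contained in $J_{n_k}$---but such a limit can perfectly well be empty or of zero density: upper density is not preserved under pointwise limits, which is exactly why this extraction is nontrivial. The paper does not perform the extraction itself; it quotes it as Theorem~\ref{thm: KerrLi: independence set of positive density} (Kerr--Li, Theorem~12.19 \& Proposition~12.7), whose conclusion---an independence set $J$ with $\ad_{\Fol}(J)\geq d$---is only available for \emph{tempered} F\o lner sequences. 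Correspondingly, the paper's Lemma~\ref{lem: positive entropy -> infinite sep numbers in discrete groups} first passes without loss of generality to a tempered subsequence of the given F\o lner sequence; this reduction is harmless because the upper density in \eqref{eq: upper density} is a $\limsup$, so $(\delta,\nu)$-separation with respect to a subsequence implies $(\delta,\nu)$-separation with respect to the original sequence. Your proposal never mentions temperedness, so even granting the Kerr--Li machinery as a black box, your argument as written only covers tempered $\Fol'$ and is missing this short but necessary reduction; without it, the claim ``for the given $\Fol'$'' that your whole construction rests on is unjustified.
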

\begin{remark}
 Observe that the proof of a similar statement for $\Z$-actions (see \cite[Theorem~2.3]{FuhrmannGroegerJaeger2016}) 
 utilised results that are only available for $G=\Z$.
 The present approach provides an alternative to the somewhat implicit argument in
 \cite{FuhrmannGroegerJaeger2016}.
\end{remark}
\begin{remark}
	We do not make explicit use of the actual definition of entropy in the
	following and rather utilize results from the theory of topological 
	independence. 
	Therefore, we refrain from discussing the basics of 
	entropy theory in the present work.
	Interested readers are referred to e.g.\ \cite{OrnsteinWeiss1987,KerrLi2016,Bowen2020,Hauser2021,HauserSchneider2022}
	for a background and further references.
\end{remark}

In order to prove Theorem~\ref{thm: positive entropy infinite separation numbers}, we first restrict to actions of countable discrete (and, as throughout assumed, amenable) groups.

\begin{definition}[{cf. \cite[Definition~8.7]{KerrLi2016}}]
 Let $(X,G)$ be a dynamical system and suppose $G$ is countable and discrete. 
 Given a pair $\mathbf{A} = (A_0,A_1)$ of subsets of $X$, we say that a set $J\subseteq G$ 
 is an \emph{independence set} for $\mathbf{A}$ if for
 every non-empty finite subset $I\subseteq J$ and every $(s_g)_{g\in I}\in\{0,1\}^I$ 
 there exists $x\in X$ with $g x\in A_{s_g}$ for every $g\in I$.
\end{definition}

\begin{theorem}[{\cite[Theorem~12.19 \& Proposition~12.7]{KerrLi2016}}]\label{thm: KerrLi: independence set of positive density}
 Suppose $G$ is discrete and countable and $(X,G)$ is a dynamical system.
 If $(X,G)$ has positive topological entropy, then there is a pair 
 $\mathbf{A}=(A_0,A_1)$ of disjoint compact subsets of $X$
 and $d>0$ such that for every tempered 
 F\o{}lner sequence $\Fol=(F_n)$ in $G$ there is an independence set $J$ of
 $\mathbf{A}$ with $\ad_\Fol(J)=\lim_{n\to\infty} |F_n\cap J|/|F_n|\geq d>0$.
\end{theorem}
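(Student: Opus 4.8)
The plan is to separate the statement into a deep combinatorial core, which fixes the pair $\mathbf A=(A_0,A_1)$ and the constant $d$ once and for all, and a softer ergodic-theoretic upgrade, in which only the independence set $J$ is allowed to depend on the given tempered F\o lner sequence. Throughout, $G$ is countable and discrete as assumed, and I work with the \emph{independence density} of a pair. For $\mathbf A=(A_0,A_1)$ and finite $F\ssq G$, let $\varphi_{\mathbf A}(F)$ be the largest cardinality of an independence set for $\mathbf A$ contained in $F$. Since subsets of independence sets are independence sets and right translates of independence sets are again independence sets, the function $\varphi_{\mathbf A}$ is monotone, invariant under right translation and subadditive over disjoint unions. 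Hence the Ornstein--Weiss subadditive convergence lemma \cite{OrnsteinWeiss1987} applies and shows that $\varphi_{\mathbf A}(F_n)/|F_n|$ converges, as $n\to\infty$, to a limit $\overline I(\mathbf A)\ge 0$ that is independent of the F\o lner sequence.

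First, I would produce a pair of disjoint compact sets with positive independence density; this is the part that genuinely uses positive entropy and is, I expect, the main obstacle. Positive topological entropy forces exponential growth of the number of distinguishable patterns of a suitable finite open cover along any F\o lner sequence. A Sauer--Shelah--Karpovsky--Milman type density lemma then guarantees that a set system on $F_n$ of exponential cardinality must shatter a subset of $F_n$ of size at least $c\,|F_n|$ for some fixed $c>0$; rephrasing ``shattered'' as ``independent'' for a pair obtained from two elements of the cover yields $\varphi_{\mathbf A}(F_n)\ge c\,|F_n|$, so $\overline I(\mathbf A)\ge c>0$. A maximality and compactness argument over such pairs---passing to disjoint closed (hence compact) neighbourhoods of the two coordinates of a resulting nondiagonal \emph{IE-pair}---then provides disjoint compact sets $A_0,A_1$ with $d\=\overline I(\mathbf A)>0$. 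Because $\overline I$ does not see the F\o lner sequence, both $\mathbf A$ and $d$ are now fixed for the remainder of the argument.

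It remains to upgrade positive independence density into a single independence set of density at least $d$ for each tempered F\o lner sequence $\Fol=(F_n)$. By the definition of $d$, I may choose independence sets $J_n\ssq F_n$ with $|J_n|/|F_n|\to d$. Encoding independence sets as their indicator functions in $\{0,1\}^G$, the set $Z$ of all independence sets for $\mathbf A$ is shift-invariant and, by compactness of $X$ together with the closedness of $A_0$ and $A_1$, closed (being an independence set is an intersection of conditions each depending on finitely many coordinates, namely nonemptiness of the finite intersections $\bigcap_{g\in I}g^{-1}A_{s_g}$). I then form the empirical measures
\[
 \mu_n\=\frac{1}{|F_n|}\sum_{g\in F_n}\delta_{g\cdot\chi_{J_n}},
\]
which are supported on $Z$ (for a shift action on $\{0,1\}^G$ chosen so that $Z$ is invariant). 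A weak$^*$ accumulation point $\mu$ is shift-invariant (as $(F_n)$ is F\o lner), supported on the closed set $Z$, and satisfies $\mu(\{\omega(e)=1\})=\lim_n|J_n|/|F_n|=d$, since the cylinder $\{\omega(e)=1\}$ is clopen.

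Finally, I would apply Lindenstrauss' pointwise ergodic theorem \cite[Theorem~1.2]{Lindenstrauss2001}; this is exactly the step that requires $\Fol$ to be tempered. Applied to the indicator $f$ of $\{\omega(e)=1\}$, it yields, for $\mu$-almost every $\omega$, convergence of the orbit averages of $f$ along $F_n$, which (for the chosen convention) compute $|F_n\cap J_\omega|/|F_n|$ with $J_\omega\=\{g\in G\;|\;\omega(g)=1\}$, to a limit function $\bar f$ with $\int\bar f\,d\mu=\int f\,d\mu=d$. Since $\bar f<d$ holding $\mu$-a.e.\ would force $\int\bar f\,d\mu<d$, the set $\{\bar f\ge d\}$ is not $\mu$-null; intersecting it with the full-measure sets where convergence holds and where $\omega\in Z$, I pick such an $\omega$. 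Then $J\=J_\omega$ is an independence set for $\mathbf A$ with $\lim_n|F_n\cap J|/|F_n|\ge d$, which is the assertion. Since $\mathbf A$ and $d$ were fixed before $\Fol$ was chosen, this produces the required $J$ uniformly over all tempered F\o lner sequences.
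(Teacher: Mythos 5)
Your proposal is correct, but note that the paper does not prove this statement at all---it imports it verbatim from \cite[Theorem~12.19 \& Proposition~12.7]{KerrLi2016}---and your argument is essentially a faithful reconstruction of the Kerr--Li proof: the Ornstein--Weiss convergence for the right-translation-invariant subadditive quantity $\varphi_{\mathbf A}$ (making $\mathbf A$ and $d$ F\o lner-independent), the Karpovsky--Milman/Sauer--Shelah step producing a nondiagonal IE-pair from positive entropy, and the upgrade via empirical measures on the closed shift-invariant set of independence sets combined with Lindenstrauss' pointwise ergodic theorem along the given tempered F\o lner sequence. Since your route coincides with that of the cited source (and your handling of the left/right shift conventions and of the non-ergodic case via $\int\bar f\,d\mu=d$ is sound), there is nothing to flag beyond the fact that the combinatorial core is sketched at the same level of detail as a citation would warrant.
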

Let $\mathbf A$, $\Fol$ and $J\ssq G$ be as in the above statement.
Observe that due to the compactness of $A_0$ and $A_1$ we actually have that for every $s=(s_j)_{j\in J}\in \{0,1\}^J$ there exists $x\in X$ which \emph{follows}
$s$, that is, $j x\in A_{s_j}$ for every $j\in J$.

\begin{lemma}\label{lem: positive entropy -> infinite sep numbers in discrete groups}
 Let $G$ be a countable discrete group and suppose
 $(X,G)$ has positive topological entropy.
 Then $(X,G)$ has infinite separation numbers with respect to every F\o{}lner sequence in
 $G$.
 In fact, there are $\delta>0$ and $\nu\in (0,1]$ such that for every F\o lner sequence
 there is an uncountable $(\delta,\nu)$-separated set.
\end{lemma}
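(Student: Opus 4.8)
The plan is to derive everything from the positive-density independence set provided by Theorem~\ref{thm: KerrLi: independence set of positive density}, reducing to the tempered case at the outset. First I would fix an arbitrary F\o lner sequence $\Fol=(F_n)$ and pass to a tempered subsequence $\Fol^*=(F_{n_k})$, which exists by \cite[Proposition~1.4]{Lindenstrauss2001}. Since $\ad_\Fol(E)=\varlimsup_n|E\cap F_n|/|F_n|\ge\varlimsup_k|E\cap F_{n_k}|/|F_{n_k}|=\ad_{\Fol^*}(E)$ for every $E\ssq G$, any set that is $(\delta,\nu)$-separated with respect to $\Fol^*$ is automatically $(\delta,\nu)$-separated with respect to $\Fol$; hence it suffices to produce the uncountable separated set for $\Fol^*$. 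Applying Theorem~\ref{thm: KerrLi: independence set of positive density} to $\Fol^*$ yields a pair $\mathbf A=(A_0,A_1)$ of disjoint compacta, a constant $d>0$ (both depending only on $(X,G)$, not on the F\o lner sequence) and an independence set $J$ with $\lim_k|J\cap F_{n_k}|/|F_{n_k}|=:d'\ge d$. I then set once and for all $\delta\=\mathrm{dist}(A_0,A_1)>0$ and $\nu\=d/3$.

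The heart of the argument is to encode an uncountable family of $\{0,1\}$-patterns into followers of $J$ so that any two differ on a set of $\ad_{\Fol^*}$-density at least $\nu$. The main obstacle is that $\ad_{\Fol^*}$ is a $\varlimsup$ and the sets $F_{n_k}$ need not be nested, so I cannot simply exhaust $J$ by an increasing family; I circumvent this by aligning blocks with a rapidly growing subsequence and by forcing disagreement at infinitely many blocks. Concretely, I would thin $\Fol^*$ further to a subsequence $(F_{m_k})$ growing so fast that $\sum_{i<k}|F_{m_i}|\le(d/6)\,|F_{m_k}|$ and $|J\cap F_{m_k}|\ge(d/2)\,|F_{m_k}|$ for every $k$ (possible because $|F_n|\to\infty$ and the density ratios converge to $d'\ge d$). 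Setting $P_k\=J\cap\big(F_{m_k}\setminus\bigcup_{i<k}F_{m_i}\big)$ yields pairwise disjoint blocks with $|P_k|\ge|J\cap F_{m_k}|-\sum_{i<k}|F_{m_i}|\ge(d/3)\,|F_{m_k}|$.

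Finally I would fix an uncountable almost disjoint family $\{A_\tau\}_\tau$ of infinite subsets of $\N$ (pairwise finite intersection, a classical object), so that $A_\tau\triangle A_{\tau'}$ is infinite whenever $\tau\ne\tau'$. To each $\tau$ I associate the pattern $s^\tau\in\{0,1\}^J$ that on each $P_k$ is constantly $\mathbf 1_{A_\tau}(k)$ and is $0$ off $\bigcup_kP_k$, and I pick (via the remark following Theorem~\ref{thm: KerrLi: independence set of positive density}) a point $x_\tau\in X$ following $s^\tau$. If $\tau\ne\tau'$ and $k\in A_\tau\triangle A_{\tau'}$, then $s^\tau$ and $s^{\tau'}$ disagree on all of $P_k$; for each such $j\in P_k$ the points $jx_\tau\in A_{s^\tau_j}$ and $jx_{\tau'}\in A_{s^{\tau'}_j}$ lie in the two different sets $A_0,A_1$, so $d(jx_\tau,jx_{\tau'})\ge\delta$ and $j\in\dsep(X,G,\delta,x_\tau,x_{\tau'})$. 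Hence $\dsep(X,G,\delta,x_\tau,x_{\tau'})\supseteq P_k$ for every $k$ in the infinite set $A_\tau\triangle A_{\tau'}$, which gives $|\dsep(X,G,\delta,x_\tau,x_{\tau'})\cap F_{m_k}|/|F_{m_k}|\ge|P_k|/|F_{m_k}|\ge d/3$ along infinitely many $k$, and therefore $\ad_{\Fol^*}(\dsep(X,G,\delta,x_\tau,x_{\tau'}))\ge d/3=\nu$. Thus $\{x_\tau\}_\tau$ is $(\delta,\nu)$-separated with respect to $\Fol^*$, hence with respect to $\Fol$; as distinct indices yield separated, and in particular distinct, points, this set is uncountable. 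This establishes the refined statement, and infinite separation numbers with respect to every F\o lner sequence follow immediately.
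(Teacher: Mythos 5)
Your proposal is correct and takes essentially the same route as the paper's proof: reduce to a tempered (sub)sequence, invoke the Kerr--Li independence-set theorem to obtain disjoint compacta $(A_0,A_1)$, a F\o lner-independent constant $d>0$ and an independence set $J$ of positive density, set $\delta=\dist(A_0,A_1)$, and encode an uncountable family of subsets of $\N$ with pairwise infinite symmetric differences into followers of patterns that are constant on pairwise disjoint blocks. The only (harmless) difference is bookkeeping: the paper builds blocks $G_{n+1}=F_N\setminus(G_1\cup\ldots\cup G_n)$ of relative density at least $1-\nu$ inside F\o lner sets and intersects with $J$ only at the very end, using that $\ad_{\Fol}(J)$ is a genuine limit, whereas you carve the blocks directly out of $J$ along a rapidly growing subsequence.
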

\begin{proof}
Let $\mathbf A=(A_0,A_1)$ and $d>0$ be as in Theorem~\ref{thm: KerrLi: independence set of positive density}.
Given a F\o lner sequence $\mc F$, we may assume without loss of generality
that $\mc F$ is tempered.
By Theorem~\ref{thm: KerrLi: independence set of positive density}, we have an associated independence set $J\ssq G$ for $\mathbf A$ 
with $\ad_\Fol(J)\geq d$.
Set $\delta=\dist(A_0,A_1)$ and $\nu =d/2\leq \ad_\Fol(J)/2$.
 Our goal is to show that there is an infinite subset $S \ssq\{0,1\}^J$ such that
 whenever $x,y\in X$ follow distinct elements in $S$, then 
 $\ad_{\Fol}(\dsep(X,G,\delta,x,y))\geq \nu$.
 
 To that end, we first define a sequence $(G_n)_{n\in \N}$ of pairwise disjoint non-empty finite subsets of $G$ such that for every infinite set $M \ssq \N$ we have
\begin{align}\label{eq: density of Gk-s}
    \ad_{\Fol}(\bigcup_{n\in M} G_n)\ge  1-\nu.
\end{align}
 We may do so by starting with $G_1=F_1$.
 Assuming we have already chosen $G_1,\ldots,G_n$ for some $n\in \N$, 
 let $N=N(n)\in \N$ be large enough to guarantee that
  \[
  |F_N\setminus(G_1\cup\ldots\cup G_n)| \ge (1-\nu)|F_N|
  \]
 and set $G_{n+1}=F_N\setminus(G_1\cup\ldots\cup G_n)$.
 Note that this gives that $(G_n)$ satisfies \eqref{eq: density of Gk-s} for every 
 infinite $M\subseteq\N$ because
 \[\ad_{\Fol}(\bigcup_{n\in M} G_n)\ge \varlimsup_{\substack{n\to\infty \\n\in M}}\frac{|F_{N(n-1)}\cap G_n|}{|F_{N(n-1)}|}\ge 1-\nu,\]
 for any infinite $M\ssq \N$.
 
 %%%%%%%%%%%%%%%%%%%%%%%
 
 Now, let $E$ be an uncountable family of subsets of $\N$ such that $M\triangle M'$ is infinite for distinct $M,M'\in E$. 
Given $M\in E$, we define $s^M\in \{0,1\}^J$ by
\[
s^M_j=\begin{cases}1 &\text{ if }j\in G_n\text{ and }n\in M,\\
0 &\text{ otherwise.}\end{cases}
\]
Set
$S=\{s^M\in\{0,1\}^J\;|\; M\in E\}$.
Given $s\in S$, choose some $x(s)\in X$ which follows $s$ (recall the discussion before the statement).
It is straightforward to see that for distinct $M,M'\in E$, we have
for $x=x(s^M)$ and $x'=x(s^{M'})$ that
\begin{align*}
\dsep(X,G,\delta,x,x') &=
\{ g\in G\;|\;d(gx,gx') \ge \delta \}\supseteq\{g\in J\;|\;\;s^M_g\neq s^{M'}_g\}
\\
&= J\cap \big(\bigcup_{n\in M\triangle M'}G_n\big).
\end{align*}
Using \eqref{eq: density of Gk-s}, we obtain
\[
\ad_\Fol\big(J\cap \bigcup_{n\in M\triangle M'}G_n\big)\geq \ad_\Fol(J)/2\geq\nu.
\]
Hence, $\{x(s)\in X\;|\;s\in S\}$ is the uncountable $(\delta,\nu)$-separated set we sought.
\end{proof}

\begin{remark}
	The above arguments are heavily based on the concept of topological 
	independence.
	Another notion in the regime of zero entropy in which related ideas
	feature prominently is \emph{topological sequence entropy}, see 
	for instance \cite{Goodman1974,KerrLi2007,Canovas2008,HuangYe2009,SnohaYeZhang2020}.
	An in-depth comparison of topological sequence entropy and amorphic 
	complexity---and possibly other so-called slow entropy notions (see
	for example \cite{Carvalho1997,HasselblattKatok2002,DouHuangPark2011,
	Marco2013, KongChen2014, CantatParisRomaskevich2021})---is certainly an interesting
	topic for further investigation.
	As this is beyond the scope of the present work, we refer the interested
	reader to \cite{FuhrmannGroegerJaeger2016,GroegerJaeger2016} for first steps
	in this direction.
\end{remark}

\begin{proof}[Proof of Theorem~\ref{thm: positive entropy infinite separation numbers}]
 Let us denote by $\cL$ a lattice (as provided by the assumptions) in $G$.
 Note that since $G$ is $\sigma$-compact, we have that $\cL$ is countable.
 
 Due to \cite[Theorem~5.2]{Hauser2021}, positive topological entropy of $(X,G)$ implies positive topological entropy of $(X,\cL)$.
 Hence, Lemma~\ref{lem: positive entropy -> infinite sep numbers in discrete groups} gives
 that $(X,\cL)$ has infinite separation numbers with respect to every F\o{}lner sequence.
 Due to Lemma~\ref{lem: ac of group and lattice coincide}, this implies infinite separation numbers of $(X,G)$ with respect to every F\o{}lner sequence.
\end{proof}

We close this section with two immediate corollaries of the above.
The first one is a consequence of Theorem~\ref{thm: positive entropy infinite separation numbers},
Proposition \ref{prop: mean equicontinuous implies finite sep} and Lemma \ref{lem: mean equicont vs sensitive dichotomy}.
For the prize of additionally assuming minimality, it extends \cite[Theorem~6.6]{ZhuHuangLian2022} to actions of non-abelian and/or uncountable groups.

\begin{corollary}
	If $G$ allows for a uniform lattice and $(X,G)$ is minimal and has
	positive topological entropy, then $(X,G)$ is (Weyl) mean sensitive.
\end{corollary}

The second consequence extends \cite[Theorem~1.2]{ZhuHuangLian2022} and follows from Proposition~\ref{prop: mean equicontinuous implies finite sep} and Theorem~\ref{thm: positive entropy infinite separation numbers}.\footnote{Observe that \cite[Theorem~1.2]{ZhuHuangLian2022} is formulated in terms of Banach mean equicontinuity which, however, in the setting of \cite{ZhuHuangLian2022} and beyond, coincides with the notion of mean equicontinuity discussed here, see \cite[Theorem~12]{LcackaStraszak2018} as well as \cite[Theorem~4.3]{ZhuHuangLian2022}.}
\begin{corollary}
	If $G$ allows for a uniform lattice and $(X,G)$ is mean equicontinuous, then $(X,G)$ has zero topological entropy.
\end{corollary}
It is worth mentioning that under the assumption of an appropriate variational principle, this statement also follows from the results in \cite[Theorem~3.15]{FuhrmannGroegerLenz2022}.
Alternatively, in the setting of discrete amenable groups, we could also use that discrete spectrum is equivalent to measure-theoretic nullness, see \cite{KerrLi2009} for more details.

\begin{remark}
 Elaborating on the relation of the results in this section and \cite{ZhuHuangLian2022}, we would like to mention that for minimal actions of countable abelian groups, Theorem~\ref{thm: positive entropy infinite separation numbers} readily follows from \cite[Theorem~6.6]{ZhuHuangLian2022} in conjunction with Corollary~\ref{cor: unimodular and minimal then mean equicont iff finite sep numbers}.
\end{remark}

%-----------------------------------------------------------------------

\section{Independence of Følner sequences}\label{sec: independence Folner seq}

In general, amorphic complexity might depend on the particular F\o{}lner sequence with respect
to which we compute the separation numbers.
For $G=\Z$, this can be seen by considering the example in
 \cite[page 541]{FuhrmannGroegerJaeger2016}.
 There, $\ac_\Fol(X,\Z)=\infty$ for $\Fol=([0,n))_{n\in \N}$ while
 $\ac_{\Fol'}(X,\Z)=0$ for $\Fol'=((-n,0])_{n\in \N}$.
 
The goal of this section is to show
\begin{theorem}\label{thm: ac independent of Folner}
 Let $(X,G)$ be a dynamical system whose product $(X^2,G)$ is pointwise uniquely ergodic.
 Then $\oac_\Fol(X,G)$ and $\uac_\Fol(X,G)$ are independent of the particular (left) F\o{}lner
 sequence $\Fol$.
\end{theorem}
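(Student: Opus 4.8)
The plan is to show that the asymptotic density $\ad_\Fol(\dsep(X,G,\delta,x,y))$ does not actually depend on $\Fol$ when $(X^2,G)$ is pointwise uniquely ergodic, and then conclude that the separation numbers and hence amorphic complexity are independent of $\Fol$. The central observation is that $\dsep(X,G,\delta,x,y)$ is (up to its boundary) the set of return times to a fixed region under the diagonal orbit of $(x,y)$ in $X^2$. More precisely, fix $(x,y)\in X^2$ and let $\rho\colon X^2\to[0,\infty)$ be the continuous function $\rho(u,v)=d(u,v)$. Then $\dsep(X,G,\delta,x,y)=\{t\in G\mid \rho(t(x,y))\geq\delta\}$, which is the set of visits of the orbit of $(x,y)$ to the closed set $\{\rho\geq\delta\}\ssq X^2$. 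Averaging the indicator of a set of visits along a F\o lner sequence is exactly a Birkhoff-type average, so unique ergodicity of the orbit closure of $(x,y)$ should pin down the density independently of $\Fol$.

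First I would reduce to van Hove sequences via Proposition~\ref{prop: Folner=van Hove}, so that I can apply a pointwise ergodic theorem for averages over van Hove sequences. Next, I would fix $(x,y)$ and denote by $\mu_{x,y}$ the unique invariant measure on $\overline{G(x,y)}\ssq X^2$. The key technical point is to compare the upper density $\ad_\Fol(\{t\mid \rho(t(x,y))\geq\delta\})$ with the $\mu_{x,y}$-measure of $\{\rho\geq\delta\}$. For a continuous function one gets exact convergence of the averages to the integral against $\mu_{x,y}$, uniformly in the orbit, by unique ergodicity; the subtlety is that $\mathbf 1_{\{\rho\geq\delta\}}$ is only an indicator of a closed set, not continuous. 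I would handle this by sandwiching the indicator between continuous functions from above and below and letting these approximate $\mathbf 1_{\{\rho\geq\delta\}}$; this yields that the upper density is at most $\mu_{x,y}(\{\rho\geq\delta\})$ and at least $\mu_{x,y}(\{\rho>\delta'\})$ for $\delta'>\delta$, with all bounds independent of $\Fol$. Since the number of values $\delta$ at which $\mu_{x,y}(\{\rho=\delta\})>0$ is at most countable, for all but countably many $\delta$ the upper density equals $\mu_{x,y}(\{\rho\geq\delta\})$ exactly and is therefore $\Fol$-independent.

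To transfer this from individual densities to separation numbers, I would argue as follows. A pair $x,y$ is $(\delta,\nu)$-separated precisely when $\ad_\Fol(\dsep(X,G,\delta,x,y))\geq\nu$, and we have just shown the left-hand density is $\Fol$-independent for all but countably many $\delta$. Consequently a set $S$ is $(\delta,\nu)$-separated with respect to one F\o lner sequence if and only if it is so with respect to any other, again for all but countably many $\delta$, so $\Sep_\Fol(X,G,\delta,\nu)$ is $\Fol$-independent for these $\delta$. Because amorphic complexity takes a supremum over $\delta>0$ and the separation numbers are monotone in $\delta$, I can restrict the supremum to a countable dense set of ``good'' $\delta$ without changing the value; the $\liminf$ and $\limsup$ in $\nu$ then involve only $\Fol$-independent quantities, yielding that $\uac_\Fol$ and $\oac_\Fol$ are independent of $\Fol$. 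The same sandwiching shows infinite separation numbers transfer between F\o lner sequences.

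The main obstacle I expect is the discontinuity of the indicator $\mathbf 1_{\{\rho\geq\delta\}}$, which prevents a direct application of unique ergodicity and forces the upper/lower continuous approximation together with the observation that only countably many $\delta$ are ``bad'' (those charged by the level set $\{\rho=\delta\}$ under some $\mu_{x,y}$). Care is also needed because the bad set of $\delta$ could a priori depend on the pair $(x,y)$; one must take a union over all pairs, which is not obviously countable, so I would instead argue that for each fixed $\delta$ outside a countable set (or by a diagonal/density argument handling all pairs simultaneously through the structure of the supremum over $\delta$) the separation numbers coincide. Making this last reduction rigorous—ensuring the supremum over $\delta$ in the definition of amorphic complexity can be computed along $\Fol$-independent values—is the most delicate part of the argument.
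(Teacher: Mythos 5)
The first half of your argument --- sandwiching $\mathbf{1}_{[\delta,1]}$ between continuous functions, invoking pointwise unique ergodicity of $(X^2,G)$, and concluding that $\ad_{\Fol}(\dsep(X,G,\delta,x,y))=\mu_{(x,y)}(\{(v,w)\;|\;d(v,w)\geq\delta\})$ whenever $\mu_{(x,y)}(\{(v,w)\;|\;d(v,w)=\delta\})=0$ --- is exactly the opening of the paper's proof of Theorem~\ref{theorem:independence Folner sequences sep numbers and ac} (your reduction to van Hove sequences is superfluous, since unique ergodicity already controls averages of continuous functions along arbitrary F\o lner sequences, but it is harmless). The gap lies in the second half, and you have in fact pointed at it yourself: the countable exceptional set of $\delta$ depends on the pair $(x,y)$, while $\Sep_{\Fol}(X,G,\delta,\nu)$ is a supremum over all separated sets, so uncountably many pairs are in play. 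Hence ``$S$ is $(\delta,\nu)$-separated for $\Fol^0$ iff for $\Fol^1$, for all but countably many $\delta$'' does not follow; the union of the bad sets over all pairs can be uncountable. Worse, even the correct move of restricting to a single \emph{maximal finite} separated set $S$ for $\Fol^0$ (available when separation numbers are finite) does not immediately help: the countable bad set then depends on $S$, which depends on $(\delta,\nu)$, so ``choose $\delta$ outside a countable set'' is circular. Neither of your proposed remedies (a countable dense set of good $\delta$, or an unspecified diagonal argument) resolves this, because no single countable set of $\delta$ works for all pairs simultaneously.

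The paper closes this gap with an extra layer of argument which constitutes the real work of Theorem~\ref{theorem:independence Folner sequences sep numbers and ac}. For infinite separation numbers (part (i)), one fixes a \emph{countable} family of finite separated sets witnessing infinitude; the bad $\delta$ arising from the countably many pairs occurring in this family form a countable set, and monotonicity of $\delta\mapsto\dsep(X,G,\delta,x,y)$ allows passing to a smaller good $\delta_0$ that works for every F\o lner sequence. For finite separation numbers (part (ii)), one works at values $\nu$ that are \emph{$\delta$-regular}, i.e.\ where $\Sep_{\Fol^i}(X,G,\cdot,\nu)$ is locally constant to the left of $\delta$: there one takes a maximal finite $(\delta,\nu)$-separated set $S$ for $\Fol^0$, perturbs $\delta$ downward to a $\delta'$ inside the window of constancy that is good for the finitely many pairs of $S$, transfers the separation of $S$ to $\Fol^1$ at $\delta'$, and climbs back up to $\delta$ via local constancy of $\Sep_{\Fol^1}(X,G,\cdot,\nu)$; a symmetric argument yields equality. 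One must then still show that the set of $\delta$ for which uncountably many $\nu$ fail to be $\delta$-regular is countable --- the paper does this through a jump-point argument exploiting that separation numbers are integer-valued and monotone --- and only after that do the monotonicity of $\Sep$ in $\delta$ and $\nu$ permit computing the $\sup_{\delta}$ and the $\varliminf/\varlimsup$ as $\nu\to 0$ along exceptional-free values, which yields Theorem~\ref{thm: ac independent of Folner}. This regularisation step is precisely the ``delicate part'' you flag but leave open, so as it stands your proof is incomplete.
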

\begin{remark}\label{rem: mean equicontinuous implies pointwise uniquely ergodic product}
 Notice that due to \cite[Theorem~1.2]{FuhrmannGroegerLenz2022}, the above gives that amorphic complexity
 of mean equicontinuous systems is independent of the particular F\o{}lner sequence.
\end{remark}

In fact, we have the following stronger statement which
immediately yields Theorem~\ref{thm: ac independent of Folner}.

\begin{theorem}\label{theorem:independence Folner sequences sep numbers and ac}
	Let $(X,G)$ be a dynamical system whose product
	$(X^2,G)$ is pointwise uniquely ergodic.
	The following holds.
	\begin{enumerate}
		\item[(i)] Suppose there is a Følner sequence $\Fol$ such that $\Sep_{\Fol}(X,G,\delta,\nu)=\infty$ 
			for some $\delta,\nu\in (0,1)$.
			Then there exists $\delta_0>0$ such that $\Sep_{\Fol'}(X,G,\delta',\nu)=\infty$ for
			every Følner sequence $\Fol'$ and every $\delta'\in(0,\delta_0]$.
		\item[(ii)]
		Let $\Fol^0$ and $\Fol^1$ be F\o lner sequences and suppose
		$\Sep_{\Fol^0}(X,G,\delta,\nu)<\infty$ for all $\nu,\delta\in (0,1)$.
		Then there is a cocountable set $A\in (0,1)$ such that for all $\delta\in A$ we have
		$\Sep_{\Fol^0}(X,G,\delta,\nu)=\Sep_{\Fol^1}(X,G,\delta,\nu)$ for all but countably many $\nu$.
	\end{enumerate}
\end{theorem}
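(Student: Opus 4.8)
The plan is to reduce both parts to a single Følner-independent sandwich for the densities $\ad_{\Fol}(\dsep(X,G,\delta,x,y))$. Since $(X^2,G)$ is pointwise uniquely ergodic, every orbit closure $\overline{G(x,y)}\subseteq X^2$ carries a unique invariant probability measure $\mu_{x,y}$. Writing $D_\delta=\{(u,v)\in X^2 : d(u,v)\ge\delta\}$ and $D_\delta^\circ=\{(u,v) : d(u,v)>\delta\}$, I set $a^+_\delta(x,y)=\mu_{x,y}(D_\delta)$ and $a^-_\delta(x,y)=\mu_{x,y}(D_\delta^\circ)$. First I would recall that along any Følner sequence the averages $\tfrac{1}{m(F_n)}\int_{F_n} f(tx,ty)\,dm(t)$ converge to $\int f\,d\mu_{x,y}$ for every continuous $f$ (unique ergodicity of $\overline{G(x,y)}$), and then upgrade this by approximating $\mathbf 1_{D_\delta}$ from above and $\mathbf 1_{D_\delta^\circ}$ from below by continuous functions (possible since $D_\delta$ is closed and $D_\delta^\circ$ is open) to obtain, for \emph{every} Følner sequence $\Fol$,
\[
 a^-_\delta(x,y)\ \le\ \ad_{\Fol}(\dsep(X,G,\delta,x,y))\ \le\ a^+_\delta(x,y).
\]
Since $\dsep(X,G,\delta,x,y)=\{t\in G : t(x,y)\in D_\delta\}$, this is precisely a statement about upper and lower Følner averages of the semicontinuous indicator of $D_\delta$, and the bounds $a^\pm_\delta(x,y)$ are manifestly independent of $\Fol$.

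The crucial monotonicity is that $D_\delta\subseteq D_{\delta'}^\circ$ for $\delta'<\delta$, so $a^+_\delta(x,y)\le a^-_{\delta'}(x,y)$. Combined with the sandwich this yields, for any two Følner sequences $\Fol,\Fol'$ and any $\delta'<\delta$, the inequality $\ad_{\Fol}(\dsep(X,G,\delta,x,y))\le\ad_{\Fol'}(\dsep(X,G,\delta',x,y))$, and therefore $\Sep_{\Fol}(X,G,\delta,\nu)\le\Sep_{\Fol'}(X,G,\delta',\nu)$, because every $(\delta,\nu)$-separated set with respect to $\Fol$ is automatically $(\delta',\nu)$-separated with respect to $\Fol'$. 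Part (i) is then immediate: if $\Sep_{\Fol}(X,G,\delta,\nu)=\infty$, pick any $\delta_0\in(0,\delta)$; for every $\Fol'$ and every $\delta'\le\delta_0<\delta$ this inequality forces $\Sep_{\Fol'}(X,G,\delta',\nu)=\infty$.

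For part (ii) I would introduce the two Følner-independent separation numbers $\overline\Sigma(\delta,\nu)$ and $\underline\Sigma(\delta,\nu)$, defined as the suprema of cardinalities of sets $S$ all of whose distinct pairs satisfy $a^+_\delta(x,y)\ge\nu$, respectively $a^-_\delta(x,y)\ge\nu$. The sandwich gives $\underline\Sigma(\delta,\nu)\le\Sep_{\Fol}(X,G,\delta,\nu)\le\overline\Sigma(\delta,\nu)$ for every $\Fol$; in particular $\Sep_{\Fol^0}=\Sep_{\Fol^1}$ at every $(\delta,\nu)$ with $\underline\Sigma(\delta,\nu)=\overline\Sigma(\delta,\nu)$. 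The finiteness hypothesis for $\Fol^0$ propagates to $\overline\Sigma$ through $\overline\Sigma(\delta,\nu)\le\underline\Sigma(\delta',\nu)\le\Sep_{\Fol^0}(X,G,\delta',\nu)<\infty$ (for $\delta'<\delta$), so all quantities in sight are finite integers. Using $a^+_\delta\le a^-_{\delta'}$ once more gives $\overline\Sigma(\delta,\nu)\le\underline\Sigma(\delta',\nu)\le\overline\Sigma(\delta',\nu)$ for $\delta'<\delta$, so $\overline\Sigma(\cdot,\nu)$ is non-increasing and satisfies $\overline\Sigma(\delta^+,\nu)\le\underline\Sigma(\delta,\nu)\le\overline\Sigma(\delta,\nu)$. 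Hence $\underline\Sigma(\delta,\nu)<\overline\Sigma(\delta,\nu)$ forces a right jump of $\overline\Sigma(\cdot,\nu)$ at $\delta$.

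It remains to show that the jump set of $\overline\Sigma$ has countable vertical slices over a cocountable set of $\delta$, and here I would exploit that $\overline\Sigma$ is integer-valued and non-increasing in \emph{both} variables. For each $k\in\N$ put $r_k(\nu)=\sup\{\delta : \overline\Sigma(\delta,\nu)\ge k\}$; this is non-increasing in $\nu$, and a right jump of $\overline\Sigma(\cdot,\nu)$ at $\delta$ can only occur at some $\delta\in\{r_k(\nu) : k\in\N\}$. For a single non-increasing function $r_k$ only countably many values are attained on a nondegenerate interval, so the set $N_k$ of such values is countable; with $N=\bigcup_k N_k$ and $A=(0,1)\setminus N$, for every $\delta\in A$ and every $k$ the set $\{\nu : r_k(\nu)=\delta\}$ is at most a singleton. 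Consequently $\{\nu : \underline\Sigma(\delta,\nu)<\overline\Sigma(\delta,\nu)\}\subseteq\bigcup_k\{\nu : r_k(\nu)=\delta\}$ is countable for every $\delta\in A$, which is exactly the claim of (ii). The main obstacle is precisely this last transposition: passing from ``countably many bad $\delta$ for each fixed $\nu$'' to ``countably many bad $\nu$ for cocountably many $\delta$'' is false for arbitrary planar sets (by a Sierpi\'nski-type construction), so the argument must genuinely use the two-variable monotonicity of $\overline\Sigma$ through the level functions $r_k$.
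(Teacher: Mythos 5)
Your proof is correct, and while it starts from the same foundation as the paper's, its combinatorial development is genuinely different. Both arguments begin by using pointwise unique ergodicity of $(X^2,G)$ to sandwich the separation density between Følner-independent quantities: in the paper's notation, $\mu_{(x,y)}(\{d>\delta\})\leq\ad_{\Fol}(\dsep(X,G,\delta,x,y))\leq\mu_{(x,y)}(\{d\geq\delta\})$, which is exactly your $a^-_\delta\leq\ad_{\Fol}\leq a^+_\delta$. From there the paper goes for an \emph{equality} statement: the density is independent of $\Fol$ except at the countably many $\delta$ (per pair $(x,y)$) which are atoms of the distribution of $d$ under $\mu_{(x,y)}$; part (i) is then proved by avoiding these bad $\delta$ for a countable family of separated sets, and part (ii) by declaring $\nu$ to be $\delta$-singular or $\delta$-regular according to left jumps of $\Sep_{\Fol^i}(X,G,\cdot,\nu)$, proving $\Sep_{\Fol^0}=\Sep_{\Fol^1}$ at regular pairs via another bad-$\delta$ avoidance, and finally showing the singular $\delta$ form a countable set by an interval-intersection contradiction (uncountably many singular $\delta$ would force uncountably many left jumps of $\Sep_{\Fol^i}(X,G,\cdot,\nu)$ at a single $\nu$). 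You never touch the atom argument; instead you exploit the cross-parameter monotonicity $a^+_\delta\leq a^-_{\delta'}$ for $\delta'<\delta$, which gives $\Sep_{\Fol}(X,G,\delta,\nu)\leq\Sep_{\Fol'}(X,G,\delta',\nu)$ for \emph{any} two Følner sequences. This makes (i) immediate (and yields the stronger conclusion that every $\delta_0<\delta$ works, whereas the paper only asserts existence of some $\delta_0$), and for (ii) it lets you work with the Følner-independent numbers $\underline\Sigma,\overline\Sigma$, reduce any disagreement between $\Sep_{\Fol^0}$ and $\Sep_{\Fol^1}$ to a right jump of $\overline\Sigma(\cdot,\nu)$, and transpose countability via the level functions $r_k(\nu)=\sup\{\delta\;|\;\overline\Sigma(\delta,\nu)\geq k\}$; I checked the key steps ($\overline\Sigma(\delta^+,\nu)\leq\underline\Sigma(\delta,\nu)$, that a right jump at $\delta$ forces $\delta=r_k(\nu)$ with $k=\overline\Sigma(\delta,\nu)<\infty$, and that each monotone $r_k$ has only countably many level sets with nonempty interior) and they are all sound, as is your closing observation that the Sierpi\'nski-type obstruction is what makes the two-variable monotonicity essential. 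In terms of trade-offs: the paper's route produces pointwise \emph{equality} of densities at good parameters, a fact of independent use, and identifies more explicitly which $(\delta,\nu)$ are good; your route is shorter, needs only inequalities, and isolates the Følner-independent objects $\underline\Sigma,\overline\Sigma$ cleanly, which arguably makes the logical structure of the theorem more transparent.
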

\begin{proof}
	Without loss of generality, we may assume that $\diam(X)=1$.
	We start by providing some general observations.
	Given $\delta\in(0,1)$, let $(h_\ell)$ and $(H_\ell)$ be sequences of non-decreasing continuous self-maps on $[0,1]$.
	For large enough $\ell\in\N$, assume that $h_\ell(z)=0$ for $z\in [0,\delta]$ and $h_\ell(z)=1$ for
	$z\in [\delta+1/\ell,1]$ as well as $H_{\ell}=0$ on $[0,\delta-1/\ell]$ and
	$H_\ell=1$ on $[\delta,1]$.
	Clearly, $h_{\ell}(z)\leq\mathbf{1}_{[\delta,1]}(z)\leq H_\ell(z)$
	for all $z\in [0,1]$ and large enough $\ell\in\N$.
	Hence, for all $x,y\in X$, every Følner sequence $\Fol=(F_n)$, and sufficiently large $\ell$, we have
	\begin{align}\label{eq: sandwich}
	\begin{split}
		&\int\limits_{X^2}h_{\ell}(d(v,w))d\mu_{(x,y)}(v,w)=
		\lim_{n\to\infty}1/|F_n|\cdot \int\limits_{F_n}h_{\ell}(d(sx,sy))dm(s)\\
		&\leq \lim_{n\to\infty}1/|F_n|\cdot \int\limits_{F_n} \mathbf{1}_{[\delta,1]}(d(sx,sy))dm(s)
		=\ad_{\Fol}(\dsep(X,G,\delta,x,y))\\
		&\leq \lim_{n\to\infty} 1/|F_n|\cdot \int\limits_{F_n}H_{\ell}(d(sx,sy))dm(s)=\int\limits_{X^2}H_{\ell}(d(v,w))d\mu_{(x,y)}(v,w),
	\end{split}
	\end{align}
	where we used the pointwise unique ergodicity
	of $(X^2,G)$ and where $\mu_{(x,y)}$ denotes the unique invariant measure on the orbit closure of $(x,y)\in X^2$.
	Sending $\ell\to \infty$, we obtain equality in \eqref{eq: sandwich}
	unless
	\begin{equation}\label{eq:bad deltas for approximation}
		\mu_{(x,y)}(\{(v,w)\in X^2\;|\;d(v,w)=\delta\})>0.
	\end{equation}
	In other words, if \eqref{eq:bad deltas for approximation} does not hold, then $\ad_{\Fol}(\dsep(X,G,\delta,x,y))$ 
	is actually independent of the Følner sequence $\Fol$.
	Notice that given $(x,y)$, there can be at most countably many $\delta$ which verify \eqref{eq:bad deltas for approximation}.
	
	Let us prove statement (i).
	Suppose $\Fol$ is a Følner sequence and $\Sep_{\Fol}(X,G,\delta,\nu)=\infty$ for some $\delta,\nu\in(0,1)$.
	Let $\mathcal S$ be a countable family of finite $(X,G,\delta,\nu)$-separated sets (with respect to $\Fol$)
	such that $\sup_{S\in\mathcal S}\#S=\infty$.
	Further, let $C\ssq(0,1)$ be the collection of all $\delta\in (0,1)$ such that for some $S\in\mathcal S$
	there are $(x,y)\in S^2$ such that \eqref{eq:bad deltas for approximation} holds.
	As $C$ is at most countable, there exists $\delta_0\in(0,\delta]$ such that for any $S\in\mathcal S$ we have
	\begin{equation*}
		\ad_{\Fol'}(\dsep(X,G,\delta_0,x,y))=\ad_{\Fol}(\dsep(X,G,\delta_0,x,y))\geq\ad_{\Fol}(\dsep(X,G,\delta,x,y))\geq\nu
	\end{equation*}
	for all $x\neq y\in S$ and any Følner sequence $\Fol'$ where we used that $\abs{\dsep(X,G,\cdot,x,y)}$ is non-increasing.
	It straightforwardly follows that each $S$ is $(X,G,\delta',\nu)$-separated with respect to any F\o lner sequence $\Fol$ 
	and every $\delta'\in(0,\delta_0]$.
	As $S$ can be chosen arbitrarily large, this proves the first assertion.
		
	Let us consider (ii).
	First, observe that due to (i), we have $\Sep_{\Fol^1}(X,G,\delta,\nu)<\infty$ for all $\delta,\nu \in (0,1)$.
	Given $\delta\in(0,1)$, we call $\nu\in(0,1)$ \emph{$\delta$-singular} if
	$\Sep_{\Fol^i}(X,G,\delta,\nu)<\Sep_{\Fol^i}(X,G,\delta-\eps,\nu)$ for all $\eps>0$ and some $i\in\{0,1\}$.
	Otherwise, we say $\nu$ is \emph{$\delta$-regular}.
	The collection of all $\delta$-singular elements of $(0,1)$ is denoted by $B_\delta$.
	We say $\delta$ is \emph{singular} if $B_\delta$ is uncountable.
	Otherwise, we call $\delta\in(0,1)$ \emph{regular}. 
	The collection of all singular $\delta$ in $(0,1)$ is denoted by $B$.
	We set $A=(0,1)\setminus B$.
	
	Next, we show that for all $\delta\in(0,1)$ and each $\nu\in B_\delta^c$ we have
	$\Sep_{\Fol^0}(X,G,\delta,\nu)=\Sep_{\Fol^1}(X,G,\delta,\nu)$.
	To prove (ii), it then remains to show that $B$ is countable.

	Given $\delta\in(0,1)$, let $\nu\in(0,1)$ be $\delta$-regular.
	By definition, there is $\eps>0$ such that
	$\Sep_{\Fol^i}(X,G,\delta,\nu)=\Sep_{\Fol^i}(X,G,\delta',\nu)$ for all $\delta'\in (\delta-\eps,\delta)$ and $i=0,1$.
	Let $S\ssq X$ be $\delta$-$\nu$-separated w.r.t. $\Fol^0$ and suppose $S$ is of maximal cardinality.
	As $S$ is finite, the collection of all $\delta\in (0,1)$ which verify \eqref{eq:bad deltas for approximation}
	for some pair $(x,y)\in S^2$ is countable.
	There is hence $\delta'\in (\delta-\eps,\delta)$ which does not verify \eqref{eq:bad deltas for approximation}
	for any $(x,y)\in S^2$.
	Clearly, $S$ is $\delta'$-$\nu$-separated for $\Fol^0$.
	By the above, $S$ is also $\delta'$-$\nu$-separated for $\Fol^1$.
	Hence,
	\[
	 \Sep_{\Fol^1}(X,G,\delta,\nu)=\Sep_{\Fol^1}(X,G,\delta',\nu)\geq \Sep_{\Fol^0}(X,G,\delta',\nu)=\Sep_{\Fol^0}(X,G,\delta,\nu).
	\]
	By changing the roles of $\Fol^0$ and $\Fol^1$, we obtain the converse inequality and 
	accordingly $\Sep_{\Fol^0}(X,G,\delta,\nu)=\Sep_{\Fol^1}(X,G,\delta,\nu)$ for all $\delta$-regular $\nu$.
	
	It remains to show that $B$ is countable.
	To that end, we need the following
	\begin{claim}
	 If $\delta\in(0,1)$ is singular, then $B_\delta$ has non-empty interior.
	\end{claim}
	\proof[Proof of the claim]
	Let $\nu\in (0,1)$ be $\delta$-singular and $\nu'\in(0,\nu)$ be $\delta$-regular.
	Observe that due to the monotonicity in both arguments of $\Sep_{\Fol^i}(X,G,\cdot,\cdot)$,
	there has to be a \emph{jump point} $\nu_0$ between
	$\nu$ and $\nu'$ (possibly coinciding with $\nu$ or $\nu'$), 
	i.e., a point $\nu_0$ such that for $i=0$ or $i=1$ we have
	$\Sep_{\Fol^i}(X,G,\delta,\nu_0-\eps)>\Sep_{\Fol^i}(X,G,\delta,\nu_0)$ for all $\eps>0$.
	As $\Sep_{\Fol^i}(X,G,\delta,\cdot)$ is non-increasing and integer-valued, 
	each compact subinterval of $(0,1)$ can contain at most finitely many
	such jump points.
	Therefore, the set of $\delta$-singular points is a union of isolated points and intervals.
	Since a subset of $(0,1)$ with only isolated points is at most countable,
	this proves the claim.
	\roundqed\smallskip
	
	Now, for a contradiction, assume that $B$ is uncountable.
	By the above claim, $B_\delta$ contains an interval $I_\delta$
	whenever $\delta\in B$.
	Thus, there must be an uncountable set $B'\ssq B$ with 
	$\bigcap_{\delta\in B'} I_\delta\neq \emptyset$.
	Accordingly, there is $\nu\in (0,1)$
	such that $\nu$ is $\delta$-singular for all $\delta\in B'$.
	As $\Sep_{\Fol^i}(X,G,\cdot,\nu)$ is non-increasing, there can be at most countably many $\delta$ with 
	$\Sep_{\Fol^i}(X,G,\delta-\eps,\nu)>\Sep_{\Fol^i}(X,G,\delta,\nu)$ for all $\eps>0$. 
	This contradicts the uncountability of $B'$.
	Hence, $B$ is at most countable.
	This finishes the proof.
\end{proof}

%-----------------------------------------------------------------------

\section{Application to regular model sets}\label{sec: ac and regular model sets}

In this section, we study amorphic complexity of (the dynamical hull
of) model sets.
Given a model set, our third main result provides 
an upper bound for its amorphic complexity which may be understood as a measure of its amorphicity.
We start by collecting a number of preliminary facts concerning Delone sets, cut and project schemes
and their associated dynamics.

\subsection{Delone dynamical systems and model sets}

From now on, in what follows, $G$ is a locally compact second countable
abelian group with Haar measure $m_G$.  
Further, in all of the following, we switch to additive notation for the
group operation in $G$, accounting for its commutativity.
By the Birkhoff-Kakutani Theorem, $G$ is
metrizable and the metric $d_G$ can be chosen to be invariant under $G$.
In fact, open balls with respect to $d_G$ are relatively compact \cite{Struble1974} so that
$G$ is automatically $\sigma$-compact.

A set $\Gamma\ssq G$ is called {\em $r$-uniformly discrete} if there exists
$r>0$ such that $d_G(g,g')>r$ for all $g\neq g'\in\Gamma$.
Moreover, $\Gamma$ is called {\em $R$-relatively dense} (or {\em $R$-syndetic})
if there exists $R>0$ such that $\Gamma\cap B_G(g,R)\neq \emptyset$ for all
$g \in G$, where $B_G(g,R)$ denotes the open $d_G$-ball of radius $R$ centred at $g$.
We call $\Gamma$ a {\em Delone set} if it is uniformly discrete and relatively dense.
The collection of all Delone sets in $G$ will be denoted by $\cD(G)$.

Given $\rho>0$ and $g\in\Gamma$, the tuple $(B_G(0,\rho)\cap (\Gamma-g),\rho)$ is
called a {\em ($\rho$-)patch} of $\Gamma$. 
The set of all patches of $\Gamma$ is denoted by $\cP(\Gamma)$.
A Delone set $\Gamma$ is said to have {\em finite local complexity} (FLC)
if for all $\rho>0$ the number of its $\rho$-patches is finite.
For $\Gamma, \Gamma' \in \cD(G)$, set
\[
	\dist(\Gamma, \Gamma') = \inf \left\{ \varepsilon > 0\;|\;\exists g
	\in B_G(0,\varepsilon) : (\Gamma-g) \cap B_G(0,1/\varepsilon) = \Gamma' \cap
	B_G(0,1/\varepsilon) \right\}.
\]
Then $d(\Gamma,\Gamma') = \min \{ 1/\sqrt{2},\dist(\Gamma,\Gamma')\}$ defines
a metric on $\cD(G)$ (see \cite[Section~2]{LeeMoodySolomyak2002}). 
Moreover, for any Delone set $\Gamma\ssq G$ with FLC the {\em dynamical hull}
of $\Gamma$, defined as
\[
	\Omega(\Gamma) = \overline{\left\{\Gamma-g\;|\;g \in G\right\}},
\]
where the closure is taken with respect to $d$, is compact 
\cite[Proposition~2.3]{Schlottmann1999}.
The dynamical system $(\Omega(\Gamma),G)$, given by the translation action of $G$ on
the hull $\Omega(\Gamma)$, is called a {\em Delone dynamical system}.

The method of choice to construct Delone sets is to utilize a \emph{cut and project scheme} (CPS).
A CPS consists of a triple $(G,H,\cL)$ of two
locally compact abelian groups $G$ ({\em external group}) and $H$ ({\em
internal group}) and a uniform lattice $\cL \ssq G \times H$ 
which is {\em irrational}, that is,
the natural projections $\pi_G : G \times H \to G$ and
$\pi_H : G \times H\to H$ satisfy
\begin{itemize}
	\item[(i)] the restriction $\pi_G \vert_\cL$ is injective;
	\item[(ii)] the image $\pi_H(\cL)$ is dense.
\end{itemize}
If not stated otherwise, we throughout assume that $G$ and $H$ are second countable.
As a consequence of (i), if we let $L=\pi_G(\cL)$ and $L^* = \pi_H(\cL)$,
the {\em star map} 
\[
	* : L \rightarrow L^* : l \mapsto l^* =
	\pi_H\circ \left.\pi_{G}\right|_\cL^{-1}(l)
\]
is well defined and surjective.
Given a precompact set $W \ssq H$ (referred to as {\em window}), we define
the point set
\begin{equation*} %\label{e.model_set}
	\oplam(W)=\pi_G\left(\cL \cap (G\times W)\right)=\{l\in L\;|\;l^*\in W\}.
\end{equation*}
If $W$ is compact and {\em proper} (that is, $\overline{\inte(W)}=W$),
then $\oplam(W)$ is a Delone set and has FLC \cite{Robinson2007}.
In this case, we call $\oplam(W)$ a {\em model set}.
If further $m_H(\partial W)=0$, then we call the window, as well as the
resulting model set, {\em regular}.
Otherwise, we refer to $W$ and $\oplam(W)$ as {\em irregular}.
Delone dynamical systems associated to regular model sets are
mean equicontinuous, see \cite[Theorem 9]{BaakeLenzMoody2007} together with 
\cite[Theorem 1.1]{FuhrmannGroegerLenz2022} as well as 
\cite[Remark~6.2 \& Corollary~6.3]{FuhrmannGroegerLenz2022}.
Note that the converse is not true, see \cite{DownarowiczGlasner2016,FuhrmannGlasnerJaegerOertel2021} and also \cite[Chapter~7.2]{FuhrmannGroegerLenz2022} for a thorough discussion and further references.

We say that a subset $A \ssq H$ is {\em irredundant} if $ \{ h \in H\;|\;h+A=A \} = \{0\}$.
Clearly, if $\partial W$ is irredundant, then so is $W$.
A CPS is called {\em Euclidean} if $G =\R^N$ and $H = \R^M$ for some 
$M,N\in\N$, and {\em planar} if $N=M=1$.
Note that in the Euclidean case, any compact window is irredundant.
Further, observe that if $W$ is not irredundant, it is possible to construct a CPS $(G,H',\cL')$ with irredundant window $W' \ssq H'$ such that for each $\Lambda \in \Omega(\oplam(W))$ with 
$\oplam(\interior(W)) \ssq \Lambda \ssq \oplam(W)$ we have $\oplam(\interior(W')) \ssq \Lambda \ssq \oplam(W')$ (compare \cite[Section~5]{LeeMoody2006} and \cite[Lemma~7]{BaakeLenzMoody2007}).

As $\cL$ is a uniform lattice in $G \times H$, the quotient $\T\=(G \times H) /
\cL$ is a compact abelian group. 
A natural action of $G$ on $\T$ is given by $(u,[s,t]_\cL) \mapsto [s+u,t]_\cL$.
Here, $[s,t]_\cL$ denotes the equivalence class of $(s,t) \in G \times H$ in $\T$. 
Observe that due to the assumptions on $(G,H,\cL)$, this action is equicontinuous,
minimal and has hence a unique invariant measure $\mu_{\T}$.
Furthermore, if $W\ssq H$ is irredundant, $(\T,G)$ is the maximal equicontinuous
factor of $(\Omega(\oplam(W)),G)$ \cite{BaakeLenzMoody2007}.
The respective factor map $\beta$ is also referred to as {\em torus
parametrization}.

Given an irredundant window $W$, the fibres of the torus parametrization
are characterized as follows: for $\Gamma\in\Omega(\oplam(W))$, we have
\begin{equation}\label{eq:flowMorphism}
	\Gamma\in\beta^{-1}([s,t]_\cL) \quad \equi \quad \oplam(\inte(W)+t)-s \ssq
	\Gamma\ssq \oplam(W+t)-s
\end{equation} 
as well as
\begin{equation*}%\label{eq:equivalenceForFibre}
	\Gamma \in \beta^{-1}([0,t]_\cL)\quad\equi\quad \exists\, (t_j) \in
	{L^*}^\N \text{ with } \jLim t_j = t \text{ and } \jLim \oplam(W+t_j) =
	\Gamma.
\end{equation*}

In the following, we denote by $\vol(\cL)$ the volume of a fundamental domain of $\cL$.
Note that $\vol(\cL)$ is well defined.

\begin{proposition}[{\cite[Proposition 3.4]{HuckRichard2015}}]\label{prop: van Hove densities}
  Let $(G,H,\cL)$ be a CPS and $W\ssq H$ be precompact. 
  Then for every
  van Hove sequence $\Fol=(F_n)$ in $G$ we have
  \begin{equation*}
	\frac{m_H(\inte(W))}{\vol(\cL)}\leq
	\varliminf_{n\to \infty}\frac{\sharp(\oplam(W) \cap F_n)}{m_G(F_n)}\leq
	\varlimsup_{n\to \infty}\frac{\sharp(\oplam (W) \cap F_n)}{m_G(F_n)}\leq
    \frac{m_H(W)}{\vol (\cL)}.
  \end{equation*}
\end{proposition}

Let us collect three more statements which follow easily from
the definition of the metric $d$ on $\cD(G)$.
Similarly to the notion of $(\delta,\nu)$-separation of elements of a dynamical system (see Section~\ref{sec: Introduction}), given a van Hove sequence $\Fol$ in $G$, we set
\[
	\nu_{\Fol}(\delta,\Gamma,\Gamma')
		=\ad_{\Fol}(\{g\in G\;|\;d(g\Gamma,g\Gamma')\geq\delta\}),
\]
where $\delta>0$ and $\Gamma,\Gamma'\in\cD(G)$. 

\begin{proposition}
	For every van Hove sequence $\Fol=(F_n)$ in $G$ we have
	\begin{equation*}
		\nu_{\Fol}(\delta,\Gamma,\Gamma')\leq m_G(B_G(0,1/\delta))
		\varlimsup_{n\to\infty}\frac{\sharp((\Gamma \Delta \Gamma')\cap F_n)}{m_G(F_n)},
		\end{equation*}
	with $\delta>0$ and $\Gamma,\Gamma'\in\cD(G)$. 
\end{proposition}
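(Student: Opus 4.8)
The plan is to translate the geometric condition ``$g\Gamma$ and $g\Gamma'$ are $\delta$-apart'' into the combinatorial condition ``$\Gamma$ and $\Gamma'$ differ close to $g$'', and then to count differing points by a covering argument. Write $D_\delta\=\{g\in G\mid d(g\Gamma,g\Gamma')\ge\delta\}$ and $\overline B\=\overline{B_G(0,1/\delta)}$, and recall that $g\Gamma=\Gamma-g$. Since $d\le 1/\sqrt2$ always, we may assume $\delta\le 1/\sqrt2$ (otherwise $D_\delta=\emptyset$), so that $g\in D_\delta$ forces $\dist(\Gamma-g,\Gamma'-g)\ge\delta$. The first and key step is the inclusion
\[
 D_\delta\ \ssq\ \bigcup_{p\in\Gamma\Delta\Gamma'}\,(p+\overline B).
\]
To prove it, fix $g\in D_\delta$. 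Since $\dist(\Gamma-g,\Gamma'-g)\ge\delta$, no $\varepsilon<\delta$ is admissible in the defining infimum; in particular, taking the trivial shift $h=0$, the sets $\Gamma-g$ and $\Gamma'-g$ disagree inside $B_G(0,1/\varepsilon)$ for every $\varepsilon<\delta$. Equivalently, the locally finite set $(\Gamma\Delta\Gamma')-g$ meets $B_G(0,1/\varepsilon)$ for every $\varepsilon<\delta$. Letting $\varepsilon\uparrow\delta$ and using that the relatively compact ball $B_G(0,2/\delta)$ contains only finitely many points of the locally finite set $(\Gamma\Delta\Gamma')-g$, a pigeonhole/nesting argument over the decreasing balls produces a single point of $(\Gamma\Delta\Gamma')-g$ lying in $\bigcap_{\varepsilon<\delta}B_G(0,1/\varepsilon)=\overline B$. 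By invariance of $d_G$ this is exactly a point $p\in\Gamma\Delta\Gamma'$ with $d_G(g,p)\le 1/\delta$, which gives the inclusion.

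Intersecting with $F_n$, applying the union bound and the translation invariance of $m_G$, I would then estimate
\[
 m_G(D_\delta\cap F_n)\ \le\ m_G(\overline B)\cdot\sharp\{p\in\Gamma\Delta\Gamma'\mid (p+\overline B)\cap F_n\neq\emptyset\}\ \le\ m_G(\overline B)\cdot\sharp\big((\Gamma\Delta\Gamma')\cap(F_n+\overline B)\big),
\]
where the last inequality uses $\overline B=-\overline B$ (symmetry of $d_G$): if $(p+\overline B)\cap F_n\neq\emptyset$ then $p\in F_n+\overline B$. It remains to replace $F_n+\overline B$ by $F_n$ inside the $\varlimsup$. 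Since $0\in\overline B$, the collar $(F_n+\overline B)\setminus F_n$ is contained in the van Hove boundary $\partial_{\overline B}F_n$, whose measure is $o(m_G(F_n))$ because $\Fol$ is by hypothesis a van Hove sequence. As $\Gamma$ and $\Gamma'$ are Delone, hence uniformly discrete, a packing estimate bounds the number of points of $\Gamma\Delta\Gamma'$ in any bounded set linearly by the measure of a fixed thickening of that set; applied to the collar (and invoking the van Hove property once more for the slightly enlarged boundary) this yields $\sharp\big((\Gamma\Delta\Gamma')\cap((F_n+\overline B)\setminus F_n)\big)=o(m_G(F_n))$. Consequently
\[
 \varlimsup_{n\to\infty}\frac{\sharp((\Gamma\Delta\Gamma')\cap(F_n+\overline B))}{m_G(F_n)}=\varlimsup_{n\to\infty}\frac{\sharp((\Gamma\Delta\Gamma')\cap F_n)}{m_G(F_n)},
\]
and dividing the previous display by $m_G(F_n)$ and passing to the $\varlimsup$ gives the asserted bound, with the closed ball $\overline B$ in place of $B_G(0,1/\delta)$.

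Finally, $m_G(\overline{B_G(0,1/\delta)})=m_G(B_G(0,1/\delta))$ whenever the sphere $\partial B_G(0,1/\delta)$ is $m_G$-null, which holds automatically for connected $G$ such as the Euclidean groups $G=\R^N$ relevant to the applications (and for all but countably many radii in general); this identification turns the closed-ball estimate into the stated inequality.

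I expect the step requiring the most care to be the extraction of a single differing point in the closed ball of radius \emph{exactly} $1/\delta$: the auxiliary shift $h$ in the definition of $\dist$ can only decrease distances, so it never works against us, but the limit $\varepsilon\uparrow\delta$ genuinely lands on the closed ball, and $D_\delta$ really does contain points $g$ whose nearest differing point sits exactly at distance $1/\delta$. This is precisely why the closed ball is forced and why matching the stated open-ball constant relies on the bounding sphere being null. By contrast, the van Hove collar estimate is routine bookkeeping once we are handed a van Hove sequence (which, by Proposition~\ref{prop: Folner=van Hove}, is no loss of generality).
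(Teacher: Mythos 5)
The paper never writes out a proof of this proposition --- it is one of three statements declared to ``follow easily from the definition of the metric $d$ on $\cD(G)$'' --- so your argument must be judged on its own terms. It is in essence the natural (and surely intended) argument, and it is correct up to one conflation that you should repair. The chain --- non-admissibility of every $\varepsilon<\delta$ with the trivial shift $h=0$, hence the locally finite set $(\Gamma\Delta\Gamma')-g$ meets $B_G(0,1/\varepsilon)$ for all $\varepsilon<\delta$, hence (by nesting over finitely many candidate points) a single differing point within distance $1/\delta$ of $g$; then the union bound over translates, the inclusion of the collar $(F_n+\overline B)\setminus F_n$ in a van Hove boundary $\partial_K F_n$, and the packing estimate for points of the uniformly discrete sets $\Gamma,\Gamma'$ in that collar --- is all sound. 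The slip is the identification $\bigcap_{\varepsilon<\delta}B_G(0,1/\varepsilon)=\overline{B_G(0,1/\delta)}$: the left-hand side equals $\{x\in G \;|\; d_G(0,x)\le 1/\delta\}$, which contains, but in general strictly contains, the closure of the open ball (in $G=\Z$ with the usual metric and $1/\delta=2$, the two sets are $\{-2,\dots,2\}$ and $\{-1,0,1\}$). Since the only property you actually extract is $d_G(g,p)\le 1/\delta$, you should simply run the entire argument with $\tilde B=\{x\in G \;|\; d_G(0,x)\le 1/\delta\}$ (compact, symmetric, containing $0$) in place of $\overline B$; nothing else changes.

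Your closing concern about landing on a closed rather than open ball is not a defect of your proof: the inequality with the open-ball constant, as literally stated, is false in general, so no proof could reach it. Indeed, take $G=\Z$, $\Gamma=\Z$, $\Gamma'=\Z\setminus N\Z$ with $N\ge 5$ (both are Delone), and $\delta=1/2$. For $\varepsilon<1$ the only admissible shift in the definition of $\dist$ is $h=0$, and one checks that $d(\Gamma-g,\Gamma'-g)\ge 1/2$ holds precisely when $\min_{k\in\Z}|g-Nk|\le 2$; hence for $F_n=\{0,\dots,n-1\}$ one gets $\nu_{\Fol}(1/2,\Gamma,\Gamma')=5/N$, whereas $m_\Z(B_\Z(0,2))\cdot\varlimsup_{n\to\infty}\sharp((\Gamma\Delta\Gamma')\cap F_n)/m_\Z(F_n)=3\cdot(1/N)$. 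So the correct universal constant is $m_G(\tilde B)$ --- exactly what your argument produces --- and the stated open-ball form is recovered precisely when $m_G(\{x\;|\;d_G(0,x)=1/\delta\})=0$, which holds for all but countably many $\delta$ (by monotonicity of $r\mapsto m_G(\{x\;|\;d_G(0,x)\le r\})$) and for every $\delta$ in the Euclidean situations relevant to the paper. The discrepancy is harmless for the paper's purposes, since the proposition enters only through Corollary~\ref{c.no_separation}, where the value of the constant is irrelevant; but your closed-ball version is the statement that is actually true in the stated generality.
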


Accordingly, together with Proposition \ref{prop: van Hove densities}, we get

\begin{corollary} \label{c.no_separation}
  If $m_H(\partial W)=0$ and $\oplam(\inte(W))\ssq \Gamma\ssq \oplam(W)$,
  then $\nu_{\Fol}(\delta,\Gamma,\Gamma')=\nu_{\Fol}(\delta,\oplam(W),\Gamma')$ for all 
  van Hove sequences $\Fol$, $\delta>0$ and $\Gamma'\in \cD(G)$.
\end{corollary}

Finally, observe that

\begin{proposition}\label{prop: distance_of_shifted_delonesets}
  Suppose $\delta>0$, $\Gamma,\Gamma'\in\cD(G)$ and $g\in B_G(0,{\delta/2})$.
  If $d(\Gamma,\Gamma')\geq \delta$, then $d(\Gamma,\Gamma'+g)\geq \delta/2$.
\end{proposition}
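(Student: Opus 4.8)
The plan is to argue by contradiction directly from the definition of $\dist$, exploiting the $G$-invariance of $d_G$. First I would dispose of the degenerate cases. Since $d\le 1/\sqrt2$ everywhere, the hypothesis $d(\Gamma,\Gamma')\ge\delta$ is vacuous unless $\delta\le 1/\sqrt2$, so I may assume $\delta\le 1/\sqrt2$; in this range $d(\Gamma,\Gamma')\ge\delta$ is equivalent to $\dist(\Gamma,\Gamma')\ge\delta$, and since $\delta/2<1/\sqrt2$, proving $d(\Gamma,\Gamma'+g)\ge\delta/2$ amounts to proving $\dist(\Gamma,\Gamma'+g)\ge\delta/2$. (If $g=0$ there is nothing to show, so I also assume $|g|\=d_G(0,g)>0$.)

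So suppose for contradiction that $\dist(\Gamma,\Gamma'+g)<\delta/2$. By definition there are $\varepsilon<\delta/2$ and $h\in B_G(0,\varepsilon)$ with
\[
(\Gamma-h)\cap B_G(0,1/\varepsilon)=(\Gamma'+g)\cap B_G(0,1/\varepsilon).
\]
The goal is to convert this into a witness that $\Gamma$ and $\Gamma'$ agree locally up to a small shift, thereby bounding $\dist(\Gamma,\Gamma')$. I would translate the whole identity by $-g$; since $d_G$ is $G$-invariant we have $B_G(0,1/\varepsilon)-g=B_G(-g,1/\varepsilon)$, and intersecting both sides with the smaller concentric ball $B_G(0,1/\varepsilon-|g|)\ssq B_G(-g,1/\varepsilon)$ yields
\[
(\Gamma-(h+g))\cap B_G(0,1/\varepsilon-|g|)=\Gamma'\cap B_G(0,1/\varepsilon-|g|).
\]

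It then remains to package this with the combined shift $h+g$. Setting $\varepsilon''\=\varepsilon+|g|$, the triangle inequality gives $h+g\in B_G(0,\varepsilon'')$, and the key point is to check the quantitative inequality $1/\varepsilon''\le 1/\varepsilon-|g|$, which permits shrinking the matching ball down to $B_G(0,1/\varepsilon'')$. This rearranges to $\varepsilon(\varepsilon+|g|)\le1$, which holds comfortably since $\varepsilon<\delta/2\le 1/(2\sqrt2)$ and $\varepsilon+|g|<\delta\le 1/\sqrt2$ force $\varepsilon(\varepsilon+|g|)<1/4$. Hence $\varepsilon''$ and $h+g$ witness $\dist(\Gamma,\Gamma')\le\varepsilon''=\varepsilon+|g|<\delta$, contradicting $\dist(\Gamma,\Gamma')\ge\delta$ and completing the argument.

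I expect the main obstacle to be the bookkeeping of the two competing radii: the \emph{shift budget} $\varepsilon$ (the size of the allowed translation) and the \emph{matching radius} $1/\varepsilon$ (the ball on which the patterns must coincide), which scale inversely in the definition of $\dist$. Perturbing by $g$ costs $|g|$ in both quantities at once---it enlarges the shift to $\varepsilon+|g|$ and shrinks the matching radius to $1/\varepsilon-|g|$---and one must confirm that both losses are simultaneously affordable, which is exactly the content of $1/(\varepsilon+|g|)\le 1/\varepsilon-|g|$. The factor $\delta/2$ in the statement is precisely what keeps $\varepsilon$ and $|g|$ small enough for this to go through.
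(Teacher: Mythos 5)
Your proof is correct. Every step checks out: the reduction to $\dist$ via the truncation at $1/\sqrt2$, the existence of $\varepsilon<\delta/2$ and $h\in B_G(0,\varepsilon)$ witnessing $\dist(\Gamma,\Gamma'+g)<\delta/2$, the translation of the matching identity by $-g$ (using invariance of $d_G$ to shrink to the concentric ball of radius $1/\varepsilon-|g|$), and the key inequality $1/(\varepsilon+|g|)\le 1/\varepsilon-|g|$, which indeed reduces to $\varepsilon(\varepsilon+|g|)\le 1$ and is guaranteed by $\varepsilon<\delta/2$ and $\varepsilon+|g|<\delta\le 1/\sqrt2$. The witness $(\varepsilon+|g|,\,h+g)$ then yields $\dist(\Gamma,\Gamma')<\delta$, the desired contradiction.

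For comparison: the paper states this proposition without proof (it is introduced with ``observe that''), and the intended observation is presumably much shorter, exploiting two facts. First, $d(\Gamma',\Gamma'+g)\le \dist(\Gamma',\Gamma'+g)\le d_G(0,g)<\delta/2$, since the shift $h=g$ itself makes the two patterns agree on \emph{every} ball, so every $\varepsilon>d_G(0,g)$ is admissible in the definition of $\dist$. Second, $d$ is a metric on $\cD(G)$ (this is exactly what the truncation at $1/\sqrt2$ buys, and it is the fact cited from Lee--Moody--Solomyak), so the triangle inequality gives $d(\Gamma,\Gamma'+g)\ge d(\Gamma,\Gamma')-d(\Gamma',\Gamma'+g)>\delta-\delta/2=\delta/2$. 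Your argument trades this two-line deduction for explicit bookkeeping, but in exchange it is fully self-contained: it never invokes the (nontrivial, cited) triangle inequality for $d$, and in effect re-proves the special case of it that is needed here. Both routes are valid; yours is the more elementary one, the paper's implicit one is the quicker one given the cited metric property.
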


\subsection{Upper bound on the amorphic complexity of regular model sets}
We next come to our third main result.
First, recall that for a locally compact $\sigma$-compact group $H$, the upper box dimension
is given by
\[
	\uboxdim(H)=\varlimsup\limits_{\eps\to 0}\frac{\log m_H\big(\overline{B_H(h,\eps)}\big)}{\log\eps},
\]
where $h\in H$ is arbitrary.
Observe that $\uboxdim(H)$ is well defined because of the invariance of the metric
$d_H$ and the Haar measure $m_H$.
Note further that the above definition, as well as the definition of the (upper) box dimension
of compact sets in Section~\ref{sec: mean equicont finite sep numbers}, are special cases
of a more general concept of box dimension.
We refrain from reproducing the slightly technical (and standard) general definition here and refer the interested reader to \cite[Section 1.4]{Edgar1998} instead.

We will also make use of \emph{Minkowski's characterisation}
of the box dimension of a given compact set $M\subseteq H$ by
\[
	\uboxdim(M)=\uboxdim(H)-\varliminf\limits_{\eps\to 0}\frac{\log m_H\big(\overline{B_H(M,\eps)}\big)}{\log\eps}.
\]
The proof of this fact in our setting is similar to the one in the Euclidean
space, see for instance \cite[Proposition 3.2]{Falconer2003}.

Finally, in order to derive upper bounds on amorphic complexity, it is 
convenient to make use of an alternative characterisation which utilises
spanning sets instead of separating sets---similar as in the derivation of upper bounds for topological entropy (or box dimension).
Given $\delta >0$ and $\nu\in (0,1]$, we say a subset $S\subseteq X$ is \emph{$(\delta,\nu)$-spanning} with respect to a Følner sequence $\Fol$ if for all $x\in X$ there exists 
$s\in S$ such that $\ad_{\Fol}(\dsep(X,G,\delta,x,s))<\nu$.  
We denote by $\Span_{\Fol}(X,G,\delta,\nu)$ the smallest cardinality
among the $(\delta,\nu)$-spanning sets with respect to $\Fol$.
It is not difficult to see that
$\Span_{\Fol}(X,G,\delta,\nu)$ instead of $\Sep_{\Fol}(X,G,\delta,\nu)$
can equivalently be used in defining amorphic complexity, see also \cite[Lemma 3.1 \& Corollary 3.2]{FuhrmannGroegerJaeger2016}.

\begin{theorem} \label{t.ac_bound_for_cps}
  Suppose $(G,H,\cL)$ is a cut and project scheme, where $G$ and $H$ are locally
  compact second countable abelian groups.
  Furthermore, let $W\ssq H$ be compact, proper, regular and irredundant and assume that $\uboxdim(H)$ is finite. 
  Then
  \begin{equation}\label{eq: upper bound ac quasicrystal}
    \oac_{\Fol}(\Omega(\oplam(W)),G) \ \leq \ \frac{\uboxdim(H)}{\uboxdim(H)-\uboxdim(\partial W)},
  \end{equation}
  for any F\o lner sequence $\Fol$.
\end{theorem}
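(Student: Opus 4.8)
The strategy is to bound the upper amorphic complexity by estimating spanning numbers $\Span_{\Fol}(\Omega(\oplam(W)),G,\delta,\nu)$ from above, using the torus parametrisation to transfer the problem from the hull to the internal group $H$. By Proposition~\ref{prop: Folner=van Hove} we may assume $\Fol$ is a van Hove sequence without changing the relevant asymptotic densities, which is essential since Proposition~\ref{prop: van Hove densities} and Corollary~\ref{c.no_separation} are stated for van Hove sequences. The key geometric input is that, by \eqref{eq:flowMorphism}, two elements $\Gamma,\Gamma'$ of the hull lying over torus points $[s,t]_\cL$ and $[s',t']_\cL$ differ essentially only through the discrepancy between the windows $W+t$ and $W+t'$ (after accounting for the $G$-shift by $s-s'$). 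By Corollary~\ref{c.no_separation}, the separation frequency $\nu_\Fol(\delta,\Gamma,\Gamma')$ depends only on the symmetric difference $\oplam(W+t)\triangle\oplam(W+t')$, which in turn is controlled by $\oplam\bigl((W+t)\triangle(W+t')\bigr)$.

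\emph{Construction of the spanning set.} Fix $\delta>0$. The plan is to build a $(\delta,\nu)$-spanning set of the hull by first choosing a finite $\eps$-spanning set $\{t_1,\dots,t_k\}$ of a fundamental domain of $H$ modulo $L^*$ (equivalently, of the torus in the internal direction) together with a suitable finite set of $G$-shifts $s_j\in B_G(0,\delta/2)$ supplied by Proposition~\ref{prop: distance_of_shifted_delonesets}, and then taking representative Delone sets over the resulting torus points. The number of internal translates needed is governed by $m_H\bigl(\overline{B_H(h,\eps)}\bigr)^{-1}$, whose growth rate is $\uboxdim(H)$. For such a net to actually be $(\delta,\nu)$-spanning, I must ensure that if $t'$ is within $\eps$ of some $t_i$, then $\nu_\Fol(\delta,\oplam(W+t'),\oplam(W+t_i))<\nu$. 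By Corollary~\ref{c.no_separation} and Proposition~\ref{prop: van Hove densities}, this quantity is bounded by a constant times $m_H\bigl((W+t')\triangle(W+t_i)\bigr)$, and since $t'$ and $t_i$ are $\eps$-close this symmetric difference is contained in $\overline{B_H(\partial W,\eps)}$. Here \emph{Minkowski's characterisation} enters: $m_H\bigl(\overline{B_H(\partial W,\eps)}\bigr)$ decays like $\eps^{\,\uboxdim(H)-\uboxdim(\partial W)}$, so choosing $\eps=\eps(\nu)$ with $\nu\sim\eps^{\,\uboxdim(H)-\uboxdim(\partial W)}$ makes the net $(\delta,\nu)$-spanning.

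\emph{Putting the exponents together.} With that choice, the cardinality of the spanning set grows like $\eps^{-\uboxdim(H)}$, up to the fixed number of $G$-shifts (which depends on $\delta$ but not on $\nu$) and subexponential factors. Taking logarithms, $\log\Span_{\Fol}(\delta,\nu)$ is asymptotic to $\uboxdim(H)\cdot(-\log\eps)$, while $-\log\nu$ is asymptotic to $\bigl(\uboxdim(H)-\uboxdim(\partial W)\bigr)\cdot(-\log\eps)$. Dividing and sending $\nu\to 0$ (i.e.\ $\eps\to 0$) yields the ratio $\uboxdim(H)/\bigl(\uboxdim(H)-\uboxdim(\partial W)\bigr)$; taking the supremum over $\delta>0$ then gives \eqref{eq: upper bound ac quasicrystal}. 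Since all limits involved are $\limsup$'s, one passes to the appropriate subsequences of $\eps$ realising the upper box dimensions, and the argument only produces an upper bound on $\oac$, which is exactly what is claimed.

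\emph{Main obstacle.} The delicate point is the uniformity of the spanning estimate in the $G$-direction. An arbitrary element of the hull lies over some $[s,t]_\cL$ with $s\neq 0$, so one must simultaneously approximate the internal coordinate $t$ (handled by the $\eps$-net in $H$) and neutralise the external shift $s$. Proposition~\ref{prop: distance_of_shifted_delonesets} is tailored for exactly this: a $G$-shift by $g\in B_G(0,\delta/2)$ degrades $\delta$-separation to $\delta/2$-separation, so the $G$-component can be absorbed by a fixed finite cover of a fundamental domain of $\cL$ in $G\times H$ at the cost of replacing $\delta$ by $\delta/2$; since we take $\sup_{\delta>0}$ this loss is harmless. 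Making this two-variable approximation precise—controlling the total symmetric difference of the windows while keeping the number of $G$-representatives independent of $\nu$—is where the bulk of the careful bookkeeping will lie, but none of it changes the exponent computation above.
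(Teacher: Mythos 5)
Your proposal is correct and takes essentially the same route as the paper's proof: reduction to a van Hove sequence, a spanning set built from a $\delta/2$-net of a compact $A\ssq G$ together with an $\eps$-net of a compact $B\ssq H$ satisfying $\pi(A\times B)=\T$ (which is the precise substitute for your ``fundamental domain of $H$ modulo $L^*$''---not literally meaningful since $L^*$ is dense), neutralisation of the external shift via Proposition~\ref{prop: distance_of_shifted_delonesets}, control of pairwise separation frequencies by $m_G(B_G(0,2/\delta))\cdot m_H\big(\overline{B_H(\partial W,\eps)}\big)$, and the final exponent count via Minkowski's characterisation. The only local deviation is that you bound the separation frequency through Proposition~\ref{prop: van Hove densities} applied to the model set with window $(W+t)\triangle(W+t_i)$, whereas the paper argues directly with unique ergodicity of $(\T,G)$ applied to the set $D(\delta,h'-h)=\pi\left(B_G(0,2/\delta)\times (W\triangle (W+h'-h))\right)$; the two mechanisms are interchangeable at this step.
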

\proof 
As $W$ is regular and hence $(\Omega(\oplam(W)),G)$ mean equicontinuous,
we may assume without loss of generality that $\Fol$ is van Hove, see Remark~\ref{rem: mean equicontinuous implies pointwise uniquely ergodic product}
and Theorem~\ref{theorem:independence Folner sequences sep numbers and ac}.
We first choose compact sets $A\ssq G$ and $B\ssq H$ such that $W\ssq B$
and $\pi(A\times B)=\T$, where $\pi:G\times H\to\T=(G\times H)/\cL$ is the canonical
projection. 

Given $(g,h)\in A\times B$, let $\hat\Gamma_{g,h}=\oplam(W+h)-g$.
Observe that $\hat \Gamma_{g,h}$ may not be an element of $\Omega(\oplam(W))$.
While for our asymptotic estimates this will be of no problem (due to 
Corollary~\ref{c.no_separation}), its explicit definition makes 
$\hat \Gamma_{g,h}$ more convenient to deal with in computations.
\begin{claim}
  Let $\delta>0$. 
  If $d_G(g,g')\leq \delta/2$ and
  $d(\hat\Gamma_{g,h},\hat\Gamma_{g',h'})\geq \delta$, then
  \[
	[-g,-h]_\cL\in\pi\left(B_G(0,2/\delta)\times (W\Delta (W+h'-h))\right)=:D(\delta,h'-h).
  \]
\end{claim}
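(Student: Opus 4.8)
The plan is to first reduce the two–parameter comparison to one between model sets sharing the \emph{same} translation~$g$, and then to read off membership in $D(\delta,h'-h)$ from a single point of the associated symmetric difference.

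First I would introduce the auxiliary set $\hat\Gamma_{g,h'}=\oplam(W+h')-g$ and note that $\hat\Gamma_{g',h'}=\hat\Gamma_{g,h'}+(g'-g)$. Since $d_G(g,g')\le\delta/2$, i.e.\ $g'-g\in\overline{B_G(0,\delta/2)}$, Proposition~\ref{prop: distance_of_shifted_delonesets} applied to $\Gamma=\hat\Gamma_{g,h}$, $\Gamma'=\hat\Gamma_{g',h'}$ and the shift $g'-g$ converts the hypothesis $d(\hat\Gamma_{g,h},\hat\Gamma_{g',h'})\ge\delta$ into
\[
  d(\hat\Gamma_{g,h},\hat\Gamma_{g,h'})\ge\delta/2 .
\]
(The boundary case $d_G(g,g')=\delta/2$ is harmless, since an arbitrarily small enlargement of the radius below does not affect the subsequent dimension estimate.) As $d=\min\{1/\sqrt2,\dist\}$ and $\delta\le 1/\sqrt2$, this also gives $\dist(\hat\Gamma_{g,h},\hat\Gamma_{g,h'})\ge\delta/2$.

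Next I would identify the symmetric difference. Writing $l^*$ for the image of $l\in L$ under the star map, a point $l-g$ lies in exactly one of $\hat\Gamma_{g,h}=\oplam(W+h)-g$ and $\hat\Gamma_{g,h'}=\oplam(W+h')-g$ precisely when $l^*\in(W+h)\Delta(W+h')=\big(W\Delta(W+h'-h)\big)+h$, that is, when $l^*-h\in W\Delta(W+h'-h)$. Hence, setting $S:=\hat\Gamma_{g,h}\,\Delta\,\hat\Gamma_{g,h'}$,
\[
  S=\{\,l-g\;|\;l\in L,\ l^*-h\in W\Delta(W+h'-h)\,\}.
\]
It then remains to produce one such point inside $B_G(0,2/\delta)$. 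I would argue by contradiction: $\hat\Gamma_{g,h}$ and $\hat\Gamma_{g,h'}$ are both contained in the uniformly discrete set $\oplam\big((W+h)\cup(W+h')\big)-g$, so $S$ is uniformly discrete, and $S\neq\emptyset$ because $d(\hat\Gamma_{g,h},\hat\Gamma_{g,h'})\ge\delta/2>0$. If $S$ had no point in $\overline{B_G(0,2/\delta)}$, then, balls being relatively compact, the distance from the origin to $S$ would be attained and exceed $2/\delta$, so some $r>2/\delta$ would satisfy $S\cap B_G(0,r)=\emptyset$; taking $u=0$ and $\varepsilon=1/r$ in the definition of $\dist$ would then yield $\dist(\hat\Gamma_{g,h},\hat\Gamma_{g,h'})\le 1/r<\delta/2$, a contradiction. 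Thus there is $l\in L$ with $l-g\in\overline{B_G(0,2/\delta)}$ and $l^*-h\in W\Delta(W+h'-h)$. Since $(l-g,l^*-h)=(l,l^*)-(g,h)$ with $(l,l^*)\in\cL$, the canonical projection gives $\pi(l-g,l^*-h)=[-g,-h]_\cL$, whence $[-g,-h]_\cL\in\pi\big(B_G(0,2/\delta)\times(W\Delta(W+h'-h))\big)=D(\delta,h'-h)$, as claimed.

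The main obstacle is the radius bookkeeping in this last step: one must convert the metric lower bound $d\ge\delta/2$ into the existence of a symmetric-difference point within radius $2/\delta$, which relies on combining relative compactness of balls with uniform discreteness of $S$ to prevent points from escaping to the boundary. The only genuinely delicate point is the open-versus-closed ball distinction; it is immaterial here, since this claim feeds into an upper box-dimension estimate that is insensitive to an arbitrarily small change of the radius.
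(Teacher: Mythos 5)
Your proposal is correct and takes essentially the same route as the paper: reduce to a common translate $\hat\Gamma_{g,h'}$ via Proposition~\ref{prop: distance_of_shifted_delonesets}, locate a point of the symmetric difference $\oplam(W+h)\Delta\oplam(W+h')$ (i.e.\ a lattice point with $\ell^*-h\in W\Delta(W+h'-h)$) within distance $2/\delta$ of $g$, and project to obtain $[-g,-h]_\cL\in D(\delta,h'-h)$. The only difference is that you spell out, via uniform discreteness and relative compactness of balls, the step the paper asserts directly from the definition of $\dist$ (ending up with the closed rather than the open ball, which, as you correctly note, is immaterial for the ensuing measure and dimension estimates); your sign slip $\hat\Gamma_{g',h'}=\hat\Gamma_{g,h'}+(g'-g)$ (it should read $\hat\Gamma_{g,h'}=\hat\Gamma_{g',h'}+(g'-g)$) is likewise harmless since the shift lies in a symmetric ball.
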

\proof[Proof of the claim]
By Proposition \ref{prop: distance_of_shifted_delonesets}, we know that
$d(\hat\Gamma_{g,h},\hat\Gamma_{g,h'})\geq \delta/2$.
Hence, there exists $(\ell,\ell^*)\in\cL$ with $\ell\in B_G(g,2/\delta)$
and $\ell\in\hat\Gamma_{g,h}\Delta\hat\Gamma_{g,h'}$.
The latter implies that $\ell^*\in (W+h)\Delta(W+h')$.

Equivalently, this means that $\ell-g\in B_G(0,2/\delta)$ and
$\ell^*-h\in W\Delta(W+h'-h)$, so that
\[
	[-g,-h]_\cL=[\ell-g,\ell^*-h]_\cL\in\pi\left(B_G(0,2/\delta)\times
	W\Delta(W+h'-h)\right).
\]
This proves the claim. \roundqed\smallskip

We can now apply the claim to estimate the separation frequency of a pair
$\hat\Gamma_{g,h}$ and $\hat\Gamma_{g',h'}$.
\begin{eqnarray*}
  \nu_{\Fol}(\delta,\hat\Gamma_{g,h},\hat\Gamma_{g',h'}) & = &
  \varlimsup_{n\to\infty}\frac{1}{m_G(F_n)} \int_{F_n}
  \ind_{[\delta,\infty)}(d(\hat\Gamma_{g,h}-t,\hat\Gamma_{g',h'}-t))dt\\ & = &
    \varlimsup_{n\to\infty}\frac{1}{m_G(F_n)} \int_{F_n}
    \ind_{[\delta,\infty)}(d(\hat\Gamma_{g+t,h},\hat\Gamma_{g' +t,h'}))dt\\ & \leq &
      \varlimsup_{n\to\infty}\frac{1}{m_G(F_n)} \int_{F_n}
      \ind_{D(\delta,h'-h)}([-g-t,h]_\cL)dt\\ & \stackrel{(*)}{=} &
      \mu_\T(D(\delta,h'-h)) \\ & \leq & m_G(B_G(0,2/\delta))\cdot
      m_H(W\Delta(W+h'-h)) \\ & \leq & m_G(B_G(0,2/\delta)) \cdot
      m_H(\overline{B_H(\partial W,d(0,h'-h))}),
\end{eqnarray*}
where the equality $(*)$ follows from the unique ergodicity of $(\T,G)$
and the fact that $\mu_\T(\partial D(\delta,h'-h))=0$.

Now, suppose that $\delta>0$ and $\nu>0$ are given. Let
\[
	\eps=\inf\left\{\eta>0\;|\;m_H\left(B_H\left(\partial W,\eta\right)\right)\geq
		\nu/m_G\left(B_G(0,2/\delta)\right) \right\}.
\]
Then we have $m_H(B_H(\partial W,\eps))\leq \nu/m_G(B_G(0,2/\delta))$ but at the same time
$m_H\big(\overline{B_H(\partial W,\eps)}\big)\geq \nu/m_G(B_G(0,2/\delta))$
due to the regularity of Haar measure.
Consequently, if $d_G(g,g')<\delta/2$ and $d_H(h,h')<\eps$, then the first
inequality combined with the above estimate yields that $\hat\Gamma_{g,h}$
and $\hat\Gamma_{g',h'}$ cannot be $(\delta,\nu)$-separated.

For $g\in G$ and $h\in H$, let $\Gamma_{g,h}$ denote some element
of $\Omega(\oplam(W))$ with 
$\oplam(\inte(W)+h)-g\ssq\Gamma_{g,h}\ssq\hat\Gamma_{g,h}$, see \eqref{eq:flowMorphism}.
We cover $A$ by $N=N_{\delta/2}(A)$ balls of radius
$\delta/2$ and $B$ by $M=N_{\eps}(B)$ balls of radius $\eps$ and denote by
$(g_n)_{n=1}^N$ and $(h_m)_{m=1}^M$ the midpoints of these balls. 
Then the set
$\{\Gamma_{g_n,h_m}\;|\;n=1\ld N,\ m=1\ld M\}$ is $(\delta,\nu)$-spanning due to the above
and Corollary~\ref{c.no_separation}.
We obtain the estimate
\begin{eqnarray*}
  \oac_{\Fol}(\Omega(\oplam(W)),G) & = & \adjustlimits\sup_{\delta>0}\varlimsup_{\nu\to 0}\frac{\log
    \Span_{\Fol}(\Omega(\oplam(W)),G,\delta,\nu)}{-\log\nu}\\
    & \leq &\adjustlimits\sup_{\delta>0}\varlimsup_{\eps \to 0}\frac{\log(N_{\delta/2}(A)\cdot N_{\eps}(B))}{-\log
    m_H\big(\overline{B_H(\partial W,\eps)}\big)}\\
    & = &\varlimsup_{\eps \to 0}\frac{\log N_{\eps}(B)/-\log \eps}
		{\log m_H\big(\overline{B_H(\partial W,\eps})\big)/\log\eps }\\
	&\leq&\frac{\uboxdim(H)}{\uboxdim(H)-\uboxdim(\partial W)}, 
\end{eqnarray*}
where we used Minkowski's characterisation in the last step.
This completes the proof. \qed\medskip

\begin{remark}
  It is not too difficult to see that the above result is optimal in the sense
  that equality is attained for some
  examples while at the same time, it cannot hold in general.
  \begin{itemize}
  \item[(a)]
		In order to see that amorphic complexity can be smaller than
		the bound provided by \eqref{eq: upper bound ac quasicrystal}, 
		let $H=\R$ and suppose $C\ssq \R$ is an arbitrary Cantor set of
		dimension $d\in[0,1)$.
		Let $W$ be a window given by the
		union of $C$ with a countable number of 
		gaps (that is, bounded connected components of $\R\setminus C$) 
		such that $\partial W=C$.
		Clearly, this can be done such that for each $n$, we have that $W$ contains less than $n$ intervals of size $2^{-n}$ or bigger. 
		If $\eps\in(2^{-n},2^{-n+1}]$, then each of
		these intervals contributes at most $2\eps$ to $m_H(W\Delta(W+\eps))$,
		whereas the union of the other intervals contributes at most $\eps$ in total
		(and $\partial W$ does not contribute since it is of zero measure).
		Hence, we obtain $m_H(W\Delta (W+\eps))\leq 2\eps n\leq 2\eps (-\log \eps/\log 2+1)$.
		Accordingly, the computation in the proof of Theorem~\ref{t.ac_bound_for_cps}
		yields $\oac_{\Fol}(\Omega(\oplam(W)),G)\leq 1 < \frac{1}{1-d}$.
   \item[(b)]
		The most straightforward examples in which equality is attained in \eqref{eq: upper bound ac quasicrystal} are given by CPS with $H=\R$.
		We refrain from discussing the technicalities (which are in spirit similar to those in the proof of the above theorem) and simply sketch the main ingredients of the construction.
        For $\gamma>2$, consider a middle segment Cantor set $C_\gamma$ which 
		is constructed by always removing the middle $(1-2/\gamma)$-th
		part of intervals in the canonical construction of Cantor sets.
		Observe that $C_\gamma$ is of dimension 
		$\uboxdim(C_\gamma)=\log 2/\log\gamma$ with gaps
		of size $(1-2/\gamma)\cdot \gamma^{-n}$.
		If $W$ is the window that is obtained by including all gaps of size
		$(1-2/\gamma)\cdot \gamma^{-n}$ with $n$ odd, it can be readily checked that
		\[
			\lim_{\eps\to 0} \frac{\log m_H(W\Delta (W+\eps))}{\log \eps}=(1-\log 2/\log\gamma).
		\]
		We may assume without loss of generality to be given an element $(u,v)$ of 
		some set of generators of $\cL$ with $C_\gamma\ssq[0,v]$.
		Let $h_1,\ldots,h_{\lfloor 1/\eps\rfloor}\in H$ be equidistributed in $[0,v]\ssq H$.
		Similarly to the estimates in the proof of Theorem~\ref{t.ac_bound_for_cps}, it can be
		checked that for small enough $\delta$, we have that $\{\Gamma_{0,h_1},\ldots
		\Gamma_{0,h_{\lfloor 1/\eps\rfloor}}\}$ is $(\delta,\nu)$-separated with
		$\nu =m_G(B_G(0,1/\delta))m_H(W\Delta (W+\eps))$ as $\eps$ (and hence $\nu$)
		tends to zero.
		Then one obtains $\oac_{\Fol}(\Omega(\oplam(W)),G)=\sup_{\delta>0}\varlimsup_{\nu\to 0}\frac{\log
		\Sep_{\Fol}(\Omega(\oplam(W)),G,\delta,\nu)}{-\log\nu}\geq
		\frac{1}{1-\uboxdim(C_\gamma)}$.
	\item[(c)]
		Note that the construction sketched in (b) yields uncountably many
		regular model sets that lie in different conjugacy classes. 
		In fact, it shows that any value in $[1,\infty)$ can be realised as the amorphic
		complexity of a regular model set.
   \item[(d)]
		The above considerations indicate that while the structure of the
		boundary of the window inflicts some upper bound on the complexity of the
		dynamics of the resulting model set, it greatly depends on the interior of
		the window whether this bound is actually attained or not. This coincides
		with similar observations concerning the topological entropy of irregular
		model sets \cite{JaegerLenzOertel2016}. 
    \item[(e)]
    In \cite{BaakeJaegerLenz2016}, Toeplitz flows are studied from the viewpoint of aperiodic order. 
    It is shown that the canonical integer set of a Toeplitz sequence can be obtained as a model set of some canonically associated cut and project scheme. 
    Moreover, in the regular case, the box dimension of the window can be computed from the scaling behaviour of the densities of the $p$-skeletons in the construction of the Toeplitz sequence. Combined with Theorem~\ref{t.ac_bound_for_cps}, this allows us to estimate the amorphic complexity of zero entropy Toeplitz flows in terms of the densities of the $p$-skeletons.
    We refer to \cite[Theorem~2]{BaakeJaegerLenz2016} for the technical details. 
    \end{itemize}
\end{remark}

\bibliographystyle{alpha-abbrvsort}
\bibliography{lit}

\end{document}